\spnewtheorem*{well-ordering_proof_CR}{Lemma~6}
{\bfseries}{\itshape}
\DeclareMathSymbol{\mhyph}{\mathalpha}{operators}{`-}
\newcommand{\deq}{\mathbin{\dot=}}
\newcommand{\dra}{\mathbin{\dot\rightarrow}}
\newcommand{\dfa}{\mathbin{\dot\forall}}
\newcommand*{\sub}{\mathrm{sub}}
\newcommand*{\subt}{\mathrm{subt}}
\newcommand*{\num}[1]{\underline{#1}}
\newcommand*{\pred}[1]{\mathrm{#1}}
\newcommand*{\gn}[1]{\ulcorner{#1}\urcorner}
\newcommand\pole{{\protect\mathpalette{\protect\polehelper}{\bot}}} \def\polehelper#1#2{\mathrel{\rlap{$#1#2$}\mkern3mu{#1#2}}}
\newcommand{\Lt}{\mathcal{L}_{\mathrm{T}}}
\newcommand{\Lr}{\mathcal{L}_{\mathrm{R}}}
\newcommand{\T}{\mathit{T}}
\newcommand{\F}{\mathit{F}}
\begin{document}
%
\title{Tarskian Theories of Krivine's Classical Realisability}
\titlerunning{Tarskian Theories of Krivine's Classical Realisability}
%
\author{Daichi Hayashi\inst{1} and
Graham E. Leigh\inst{2}}
%
%
\institute{
Independent Scholar
\email{daichinhayashi0611@gmail.com} \and
Department of Philosophy, Linguistics and Theory of Science, University of Gothenburg, G\"oteborg, Sweden 
\email{graham.leigh@gu.se}\\
}
%
\maketitle              
\begin{abstract}

This paper presents a formal theory of Krivine's classical realisability interpretation for first-order Peano arithmetic ($\mathsf{PA}$). 
To formulate the theory as an extension of $\mathsf{PA}$, we first modify Krivine's original definition to the form of number realisability, similar to Kleene's intuitionistic realisability for Heyting arithmetic.
By axiomatising our realisability with additional predicate symbols, 
we obtain a first-order theory $\mathsf{CR}$ which can formally realise every theorem of $\mathsf{PA}$.
Although $\mathsf{CR}$ itself is conservative over $\mathsf{PA}$, adding a type of reflection principle that roughly states that ``realisability  implies  truth'' results in  $\mathsf{CR}$ being essentially equivalent to the Tarskian theory $\mathsf{CT}$ of typed compositional truth,
which is known to be proof-theoretically stronger than $\mathsf{PA}$.
We also prove that a weaker reflection principle which preserves the distinction between realisability and truth is sufficient for $\mathsf{CR}$ to achieve the same strength as $\mathsf{CT}$. 
Furthermore, we formulate transfinite iterations of $\mathsf{CR}$ and its variants, and then we determine their proof-theoretic strength. 

\keywords{Classical realisability  \and Axiomatic theory of truth \and Ramified truth predicates \and Proof-theoretic strength.}
\end{abstract}
\section{Introduction}

Tarski \cite{tarski1983concept} presented a truth definition for a formal language by distinguishing the object language from the metalanguage. 
Although Tarski preferred a model-theoretic definition of truth, many researchers have also performed axiomatic studies to examine the logical and ontological principles underpinning a formal the conception of truth. 
As a typical example of such attempts, the language $\mathcal{L}$ of classical first-order Peano arithmetic $(\mathsf{PA})$ is frequently selected as the object language, to which a fresh unary predicate $\T(x)$ for truth is added. 
The compositional truth theory $\mathsf{CT}$~(see Definition~\ref{defn:CT}) is a natural axiomatisation of Tarski's truth definition for $\mathcal{L}$, defined as an extension of $\mathsf{PA}$ by a finite list of axioms concerning $\T$.
Various hierarchical or self-referential definitions of truth and their axiomatisations have been proposed \cite{cantini1990theory,feferman1991reflecting,friedman1987axiomatic,halbach2014axiomatic,leigh2010ordinal,leitgeb2005truth}; however, most follow Tarski's paradigm, at least partially.

As another semantic framework for classical theories, Krivine formulated classical realisability \cite{krivine2001typed,krivine2003dependent,krivine2009realizability}, which is a classical version of Kleene's intuitionistic realisability \cite{kleene1945interpretation}.
In the following, we briefly explain Krivine's classical realisability. 
There are two kinds of syntactic expressions, \emph{terms} $t$, which represent programs, and \emph{stacks} $\pi$, which represent evaluation contexts. 
A \emph{process} $t \star \pi$ is a pair of a term $t$ and a stack $\pi$.
In addition, we fix a set $\pole$ (\emph{pole}) of processes that is closed under several evaluation rules for processes. 
Then, for each sentence $A$, the set $\lvert A \rvert_{\pole}$ (\emph{realisers}) of terms and the set $\lVert A \rVert_{\pole}$ (\emph{refuters}) of stacks are defined inductively such that $t \star \pi \in \pole$ for any $t \in \lvert A \rvert_{\pole}$ and  $\pi \in \lVert A \rVert_{\pole}$. Here, a term $t$ \emph{universally  realises} $A$ if $t \in \lvert A \rvert_{\pole}$ for every pole $\pole$.
Then, Krivine proved that each theorem of second-order arithmetic is universally realisable.\footnote{Moreover, Krivine's classical realisability can be given to other strong theories, such as Zermelo--Fraenkel set theory \cite{krivine2001typed}.} 
In particular, we can take the empty set $\emptyset$ as a pole, and it is easy to show that if $\lvert A \rvert_{\emptyset}$ is not empty, then $A$ is true in the standard model $\mathbb{N}$ of arithmetic.
In summary, Krivine's (universal) realisability implies truth in $\mathbb{N}$; thus, Tarskian truth can be positioned in Krivine's general framework. 

The purpose of this paper is to axiomatise a formal theory $\mathsf{CR}$ for Krivine's classical realisability in a similar manner to $\mathsf{CT}$ for Tarski's truth definition. 
To clarify the relationship with $\mathsf{CT}$, we formulate $\mathsf{CR}$ over $\mathsf{PA}$.
As clarified in the above explanation of Krivine's realisability, we require additional vocabularies for a pole $\pole$ and the relations $t \in \lvert A \rvert_{\pole}$ and $\pi \in \lVert A \rVert_{\pole}$. With the help of G\"{o}del-numbering, they can be expressed by a unary predicate $x \in \pole$ and binary predicates $x \T y $ and $x \F y$, respectively.
Although Krivine's realisability uses $\lambda$-terms to express terms, stacks, and processes, we define realisers and refuters as natural numbers, similar to Kleene's number realisability.
Since our base theory is $\mathsf{PA}$, this modification can simplify the formulation of $\mathsf{CR}$ substantially (Section~\ref{subsec:classical_number_realisability}).

The remainder of this paper is organised as follows.
In Section~\ref{sec:classical_realisability} we define classical number realisability as a combination of Krivine's classical realisability and Kleene's intuitionistic number realisability.
By formalising our realisability, we obtain a first-order theory $\mathsf{CR}$ of compositional realisability (Definition~\ref{defn:CR}).
In Section~\ref{sec:formalised_realisation_of_PA}, we observe that our classical number realisability can realise every theorem of $\mathsf{PA}$, which is formalisable in $\mathsf{CR}$ (Theorem~\ref{formal_realisability_PA}).
Then, in Sections~\ref{sec:proof_theoretic_strength_of_CR}, \ref{append:well_ordering_CR}, 
we study the proof-theoretic strength of $\mathsf{CR}$.
First, $\mathsf{CR}$ is shown to be conservative over $\mathsf{PA}$ (Proposition~\ref{prop:conservativity_CR}). 
Then, we formulate a kind of reflection principle under which $\mathsf{CR}$ essentially amounts to $\mathsf{CT}$ (Proposition~\ref{prop:CT_and_CR_with_empty}). 
We also consider a weaker reflection principle that is sound with respect to any pole, and we prove that the principle can make $\mathsf{CR}$ as strong as $\mathsf{CT}$ (Theorem~\ref{thm:strength_CR+}).
In Section~\ref{sec:ramified_realisability}, we define, for each predicative ordinal $\gamma$, a system $\mathsf{RR}_{< \gamma}$, which is a transfinitely iterated version of $\mathsf{CR}$.
We also consider its extensions by the same reflection principles as for $\mathsf{CR}$, and then we determine their proof-theoretic strength.
Finally, potential future work is discussed in Section~\ref{sec:classical_real_future_work}.

This paper is an extended version of the conference paper~\cite{hayashi2024compositional}.
This paper adds a new section (Section~\ref{sec:ramified_realisability}), in which ramified theories and their proof-theoretic properties are studied.


\subsection{Conventions and Notations}
We introduce abbreviations for common formal concepts concerning coding and recursive functions. 
We denote by \( \mathcal L \) the first-order language of $\mathsf{PA}$.
The logical symbols of $\mathcal{L}$ are $\to$, $\forall$ and \( = \).
The non-logical symbols are a constant symbol $\num 0$ and the function symbols for all primitive recursive functions. 
In particular, $\mathcal{L}$ has the successor function $x + 1,$ with which we can define \emph{numerals} $\num 0,\num 1,\num 2,\dotsc$. 
Thus, we identify natural numbers with the corresponding numeral.
We also employ the false equation $0=1$ as the propositional constant $\bot$ for contradiction, and then
the other logical symbols are defined in a standard manner, e.g., $\neg A = A \to \bot$.

The primitive recursive pairing function is denoted $\langle \cdot, \cdot \rangle$ with projection functions $(\cdot)_0$ and $(\cdot)_1$ satisfying $(\langle x,y \rangle)_0 = x$ and $(\langle x,y \rangle)_1 = y$. 
Sequences are treated as iterated pairing: $\langle x_0, x_1,\dots,x_k \rangle := \langle x_0, \langle x_1, \dots, x_k \rangle \rangle$.
Based on these constructors, each finite extension of \( \mathcal{L} \) is associated a fixed G\"{o}del coding (denoted \( \ulcorner e \urcorner \) where \( e \) is a finite string of symbols of the extended language) for which the basic syntactic constructions are primitive recursive.
In particular, \( \mathcal L \) contains a binary function symbol \( \sub \) representing the mapping \( \gn{A(x)} , n \mapsto \gn{A(\num n)} \) in the case that \( x \) is the only free variable of \( A(x) \), and binary function symbols \( \deq \), \( \dra \) and \( \dfa \) representing, respectively, the operations \( \gn s, \gn t \mapsto \gn{ s = t } \), \( \gn A, \gn B \mapsto \gn{ A \rightarrow B } \) and \( \gn x, \gn A \mapsto \gn{ \forall x A } \).
These operations will sometimes be omitted and we write \( \gn{s=t} \) and \( \gn{A\to B} \) for \( \deq( \gn s , \gn t ) \) and \( \dra(\gn A ,\gn B) \), etc.

We introduce a number of abbreviations for \( \mathcal L \)-expressions corresponding to common properties or operations on G\"odel codes.
The property of being the code of a variable is expressed by the formula \( \pred{Var}(x) \),
$\pred{ClTerm}(x)$ denotes the formula expressing that $x$ is a code of a \emph{closed} $\mathcal{L}$-term, and 
for a fixed extension \( \mathcal L' \) of \( \mathcal L \), $\pred{Sent}_{\mathcal{L}'}(x)$ expresses that $x$ is the code of a sentence.
The concepts above are primitive recursively definable meaning that the representing \( \mathcal L \)-formula is simply an equation between \( \mathcal L \)-terms.
Given a formula $A(x)$ with at most $x$ is free, $\gn {A(\dot{x})}$ abbreviates the term $\sub(\gn {A(x)} , x)$ expressing the code of the formula \( A(\num n) \) where \( n \) is the value of \( x \).
For sequences $\vec{x}$ of variables, $\gn {A(\dot{\vec{x}})} := \sub(\ulcorner A(\vec{x}) \urcorner, \vec{x})$ is defined similarly.

Quantification over codes is associated similar abbreviations:
\begin{itemize}
	\item \( \forall \ulcorner A \urcorner \in \pred{Sent}_{\mathcal{L}'}. \ B( \ulcorner A \urcorner ) \) abbreviates \( \forall x (\pred{Sent}_{\mathcal{L}'}(x) \to B( x )) \).
	\item  \( \forall \gn s .\ B( \gn{s}  )  \) abbreviates
		\(
			\forall x( \pred{ClTerm}(x) \to B( x ) )
		\).
	\item \( \forall \gn { A_v } \in \pred{Sent}_{\mathcal{L}'}.\ B( v , \gn{ A } ) \) abbr.
		\(
		\forall x\forall v(\pred{Var}(v) \wedge \pred{Sent}_{\mathcal{L}'}( \dfa v x ) \to B( v, x ) ),
		\) namely quantification relative to codes of formulas with at most one distinguished variable free.
\end{itemize}

Partial recursive functions can be expressed in \( \mathcal L \) via the Kleene `T predicate' method.
The ternary relation $ x \cdot y \simeq z $ expresses that the result of evaluating the \( x \)-th partial recursive function on input \( y \) terminates with output \( z \).
Note that this relation has a $\Sigma^{0}_1$ definition in $\mathsf{PA}$ as a formula \( \exists w (\mathrm{T}_1(x,y,w) \land (w)_0 = z ) \) where \( \mathrm{T}_1 \) is primitive recursive.
It will be notationally convenient to use \( x \cdot y \) in place of a \emph{term} (with the obvious interpretation) though use of this abbreviation will be constrained to contexts in which potential for confusion is minimal.

With the above ternary relation we can express the property of two closed terms having equal value, via a $\Sigma^0_1$-formula $\pred{Eq}(y,z)$.
That is, \( \pred{Eq}(y,z) \) expresses that $y$ and $z$ are codes of closed $\mathcal{L}$-terms $s$ and $t$ respectively such that $s=t$ is a true equation. 


\subsection{Classical Compositional Truth}

Tarskian truth for $\mathcal{L}$ is characterised inductively in the standard manner:
\begin{itemize}
	\item A closed equation $s=t$ is true iff $s$ and $t$ denote the same value in $\mathbb{N}$;
	\item $A \to B$ is true iff if $A$ is true, then $B$ is true;
	\item $\forall x A(x)$ is true iff $A(s)$ is true for all closed terms $s$.
\end{itemize}

Quantification over $\mathcal{L}$-sentences and the operations on syntax implicit in the above clauses can be expressed via a G\"{o}del-numbering. 
Thus, employing a unary (truth) predicate $\T$, a formal system $\mathsf{CT}$ can be defined corresponding, in a straightforward manner, to the Tarskian truth clauses.

\begin{definition}[$\mathsf{CT}$]\label{defn:CT}
For a unary predicate $\T,$ let $\Lt = \mathcal{L} \cup \{ \T \}.$
The $\mathcal{L}_{\mathrm{T}}$-theory $\mathsf{CT}$ (compositional truth) consists of $\mathsf{PA}$ formulated for the language $\Lt$ plus the following three axioms.
\begin{description}

\item[$(\mathrm{CT}_{=})$] $\forall \gn s , \gn t. \ \T ( s \deq t )  \leftrightarrow \pred{Eq}(s,t) $.

\item[$(\mathrm{CT}_{\to})$] $\forall \gn {A },\gn {B} \in \pred{Sent}_{\mathcal{L}}. \ \T\gn {A \to B} \leftrightarrow ( \T \gn A \to \T\gn B ) $.

\item[$(\mathrm{CT}_{\forall})$] $\forall \gn {A_v} \in \pred{Sent}_{\mathcal{L}}. \ \T\gn {\forall v A} \leftrightarrow \forall x \T \gn {A(\dot{x})} $.

\end{description}
\end{definition}

Two consequences of the above axioms are of particular relevance.
%
The first is the observation that $\mathsf{CT}$ staisfies Tarski's Convention T \cite[pp.~187--188]{tarski1983concept} for formulas in \( \mathcal L \):

\begin{lemma}\label{fact:tarski_biconditional}
The Tarski-biconditional is derivable in \( \mathsf{CT} \) for every formula of \( \mathcal L \). That is, for each formula \( A(x_1, \dotsc, x_k) \) of \( \mathcal L \) in which only the distinguished variables occur free, we have
\[
\mathsf{CT} \vdash \forall x_1, \dotsc , x_k ( \T \ulcorner A(\dot{ x}_1, \dotsc, \dot{ x}_k ) \urcorner \leftrightarrow A(x_1 , \dotsc, x_k) ).
\]
\end{lemma}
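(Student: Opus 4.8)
The plan is to prove the Tarski-biconditional by induction on the complexity of the $\mathcal{L}$-formula $A$. Since the biconditional must be stated with free variables, the correct induction hypothesis is the full universally quantified statement $\forall \vec{x}\,(\T\gn{A(\dot{\vec x})} \leftrightarrow A(\vec x))$, ranging over all $\mathcal{L}$-formulas $A$ with free variables among $\vec x$. The three cases of the induction match the three compositional axioms of $\mathsf{CT}$.

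\medskip
\noindent\textbf{Atomic case.} For $A$ of the form $s(\vec x) = t(\vec x)$ with $s,t$ $\mathcal{L}$-terms, I would note that $\gn{A(\dot{\vec x})}$ is (provably in $\mathsf{PA}$, by the primitive recursive definition of substitution) the term $\gn{s(\dot{\vec x})} \deq \gn{t(\dot{\vec x})}$, and that each $\gn{s(\dot{\vec x})}$ is a closed term whose value, via $\pred{Eq}$, is $\mathsf{PA}$-provably the value of $s(\vec x)$ itself — this is a standard fact about the representability of primitive recursive functions. Then $(\mathrm{CT}_{=})$ gives $\T\gn{s(\dot{\vec x})\deq t(\dot{\vec x})} \leftrightarrow \pred{Eq}(\gn{s(\dot{\vec x})},\gn{t(\dot{\vec x})})$, and the right-hand side is equivalent in $\mathsf{PA}$ to $s(\vec x) = t(\vec x)$.

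\medskip
\noindent\textbf{Implication case.} For $A = B \to C$, I would observe that $\gn{A(\dot{\vec x})} = \gn{B(\dot{\vec x})} \dra \gn{C(\dot{\vec x})}$ and that for each value of $\vec x$ the codes $\gn{B(\dot{\vec x})}$, $\gn{C(\dot{\vec x})}$ are codes of $\mathcal{L}$-sentences, so $(\mathrm{CT}_{\to})$ applies and yields $\T\gn{A(\dot{\vec x})} \leftrightarrow (\T\gn{B(\dot{\vec x})} \to \T\gn{C(\dot{\vec x})})$. Applying the induction hypotheses for $B$ and $C$ (with the same variable tuple $\vec x$) converts this to $A(\vec x)$.

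\medskip
\noindent\textbf{Universal case.} For $A = \forall v\,B$, I would split off $v$ from $\vec x$, apply $(\mathrm{CT}_{\forall})$ to get $\T\gn{A(\dot{\vec x})} \leftrightarrow \forall y\,\T\gn{B(\dot v/y,\dot{\vec x})}$, and then use the induction hypothesis for $B$ (now with variable tuple $v,\vec x$) under the universal quantifier $\forall y$. The one bookkeeping point is that iterated substitution of numerals commutes appropriately, i.e. $\gn{(\forall v\,B)(\dot{\vec x})}$ with a further numeral substituted for $v$ gives $\gn{B(\dot v/y,\dot{\vec x})}$; this is again a $\mathsf{PA}$-provable fact about the $\sub$ function.

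\medskip
The genuine obstacle, as usual for $\mathsf{CT}$-style arguments, is that this induction is \emph{external}: it is a schema over the metatheoretic complexity of $A$, not a single formula provable by internal induction in $\mathsf{CT}$ (the predicate $\T$ does not occur in $A$, so the induction is over a fixed finite object and is legitimate as a metatheorem, but one cannot fold it into one $\mathsf{CT}$-proof uniformly in $A$). The only real work is the syntactic lemma that the code $\gn{A(\dot{\vec x})}$ behaves correctly under the term operations $\deq,\dra,\dfa$ and under iterated numeral substitution — all $\mathsf{PA}$-provable because the relevant syntactic operations are primitive recursive and hence represented by $\mathcal{L}$-terms satisfying their defining equations in $\mathsf{PA}$. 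Once that is in place, each case is a one-line application of the matching compositional axiom followed by the induction hypothesis.
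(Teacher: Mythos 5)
Your proof is correct and is exactly the standard argument that the paper leaves implicit (Lemma~\ref{fact:tarski_biconditional} is stated without proof): an external induction on the complexity of \( A \), with each case discharged by the matching compositional axiom plus the \( \mathsf{PA} \)-provable facts that \( \sub \) commutes with \( \deq \), \( \dra \), \( \dfa \) and that closed-term evaluation is correctly represented. You also correctly flag the one point worth flagging, namely that the induction is schematic in \( A \) rather than internal to \( \mathsf{CT} \).
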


Second is the `term regularity principle' stating that the truth value of each formula depends only on the value of terms and not their `structure'.
Let \( \subt \) be a primitive recursive function function such that \( \subt \colon \gn{A(x)}, \gn x , \gn s \mapsto \gn{A(s)} \) for each formula \( A \), variable \( x \) and term \( s \).
\begin{lemma}\label{term-regular-CT}
	Provable in \( \mathsf{CT} \) is the term regularity principle:
	\[
	\forall \gn {s} \forall \gn t \forall \gn {A_v} \in \pred{Sent}_{\mathcal{L}}.\  \pred{Eq}(\gn s , \gn t ) \to (\T \gn {A( s)} \to \T \gn {A( t)})
	\]
	where the term \( \gn{A(s)} \) is shorthand for \( \subt(\gn{A}, v, \gn s) \), and \( \gn{A(t)} \) likewise.
\end{lemma}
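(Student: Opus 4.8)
The plan is to prove the term regularity principle by external induction on the build-up of the formula $A$, invoking the compositional axioms of $\mathsf{CT}$ at each step. Fix codes $\gn s$, $\gn t$ of closed terms with $\pred{Eq}(\gn s, \gn t)$, and fix a code $\gn{A_v}$ of a formula with at most $v$ free; we argue inside $\mathsf{CT}$, but the induction itself is a metatheoretic induction on the syntactic complexity of $A$, so strictly speaking the statement is a schema over the shape of $A$ — except that here $A$ is quantified internally, so the induction must be carried out \emph{in} $\mathsf{CT}$ using the $\mathsf{PA}$ induction available for $\Lt$-formulas. Concretely, I would prove by internal induction on (a suitable length/height measure of) the code $\gn{A}$ the statement $\forall \gn s\forall\gn t\,(\pred{Eq}(\gn s,\gn t)\to(\T\gn{A(s)}\to\T\gn{A(t)}))$, quantified over all one-variable sentence codes $\gn{A_v}$ of that complexity.

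The base case is atomic formulas, i.e.\ $A$ is an equation $r_1 = r_2$ where $r_1, r_2$ may contain $v$. Then $\gn{A(s)}$ and $\gn{A(t)}$ code the equations $r_1(s)=r_2(s)$ and $r_1(t)=r_2(t)$. By $(\mathrm{CT}_{=})$, $\T\gn{A(s)}$ is equivalent to $\pred{Eq}(\gn{r_1(s)},\gn{r_2(s)})$, and similarly for $t$. Since $\pred{Eq}(\gn s,\gn t)$ says $s$ and $t$ have the same value, a straightforward $\mathsf{PA}$-provable fact about the recursive evaluation of terms (substitution of co-denoting closed terms preserves the value of any term) gives $\pred{Eq}(\gn{r_i(s)},\gn{r_i(t)})$ for $i=1,2$, whence $\pred{Eq}(\gn{r_1(s)},\gn{r_2(s)})\leftrightarrow\pred{Eq}(\gn{r_1(t)},\gn{r_2(t)})$ and the atomic case follows. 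The implication case uses $(\mathrm{CT}_{\to})$: if $A = B\to C$ then $\gn{A(s)}$ codes $B(s)\to C(s)$, and $(\mathrm{CT}_{\to})$ reduces $\T\gn{A(s)}$ to $\T\gn{B(s)}\to\T\gn{C(s)}$; the induction hypotheses applied to $B$ and $C$ (in both directions, since $\pred{Eq}$ is symmetric) give the equivalence of the two antecedents and two consequents. The universal case uses $(\mathrm{CT}_{\forall})$: if $A = \forall u\,B$ with $u\neq v$ (we may assume, by renaming, that $u$ is not $v$ and does not occur in $s$ or $t$), then $\gn{A(s)}$ codes $\forall u\, B(s)$, and $(\mathrm{CT}_{\forall})$ reduces its truth to $\forall x\,\T\gn{B(s)(\dot x)}$; commuting the substitutions of $s$ (a closed term) and of $\dot x$ — a primitive recursive identity on codes provable in $\mathsf{PA}$ — lets us apply the induction hypothesis to $B(\num x)$ uniformly in $x$ and conclude.

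The main obstacle is bureaucratic rather than conceptual: one must set up the internal induction on codes so that the induction hypothesis is available for all immediate subformulas, which requires a complexity measure on codes (e.g.\ logical depth) for which $\mathsf{PA}$ proves that the subformulas of $\gn{A_v}$ have strictly smaller measure and are again codes of formulas with at most the relevant variables free; and one must verify the several primitive recursive commutation identities relating $\subt(\gn{A},v,\gn s)$ with the syntactic constructors $\deq$, $\dra$, $\dfa$ and with the $\sub(\cdot,u)$ used in $(\mathrm{CT}_{\forall})$. A subtlety worth flagging is the interaction of the two substitutions in the quantifier case — substituting the closed term $s$ for $v$ and then the numeral $\num x$ for $u$ must be shown to coincide (as codes) with performing these in the other order — but since $s$ is closed and $u\neq v$, this is a routine property of the coding. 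Once these lemmas about the coding are in hand, the three inductive steps are immediate applications of the corresponding $\mathsf{CT}$ axioms, exactly paralleling the proof of Lemma~\ref{fact:tarski_biconditional}.
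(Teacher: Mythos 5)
Your proof is correct and follows exactly the route the paper intends: the paper states this lemma without proof (remarking only, for the analogous Proposition on $\mathsf{CR}$, that ``a straightforward formal induction'' verifies it), and your internal induction on the logical depth of the code of $A$, using $\Lt$-induction and the three compositional axioms, with the symmetric use of the hypothesis in the $\to$-case and the substitution-commutation identity in the $\forall$-case, is precisely that argument carried out in full.
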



\section{Classical Realisability}\label{sec:classical_realisability}
We present a classical number realisability interpretation for $\mathsf{PA}$ and the corresponding axiomatic theory $\mathsf{CR}$.

\subsection{Classical Number Realisability}\label{subsec:classical_number_realisability}
We introduce a realisability interpretation for $\mathsf{PA}$ based on Krivine's classical realisability. 
For our setting we require several modifications from Krivine's original definition.
The first modification is about realisers. In Krivine's formulation, realisers are essentially lambda terms (cf. \cite{oliva2008krivine}). 
As we seek to formalise realisability over $\mathsf{PA}$, it is natural to assume that realisers are natural numbers,
similar to Kleene's number realisability for the intuitionistic arithmetic \cite{kleene1945interpretation}.

Second, we must define the realisability and refutability conditions explicitly for equality. In the language of second-order arithmetic, the equality $a=b$ between $a$ and $b$ is definable by Leibniz equality 
$\forall X (a \in X \to b \in X)$,
which in Krivine's definition, determines the realisability and refutability conditions of the equation uniquely.
For a first-order language, equality is a primitive logical symbol and it is a matter of choice what constitutes a refutation of a closed equation.
In some sense, we take the most naive approach motivated by Kleene and Krivine's choices: a true equation is refuted by every element of the pole $\pole \subseteq \mathbb{N}$ and a false equation by every natural number.
Although a natural question, we do not delve into the possibility of other definitions.

The third modification involves the interpretation of the first-order universal quantifier $\forall x$. In Krivine's definition, it is interpreted \emph{uniformly}, i.e., a term $t$ realises a universal sentence $\forall x A$ when $t$ realises every instance $A(n).$ 
This definition is sufficient for the interpretation of second-order arithmetic because the set of natural numbers $\mathbb{N}$ is definable so the axiom of induction does not need to be realised explicitly.
In contrast, in case of the first-order arithmetic, realisers of  induction must be presented and the uniform interpretation is not ideal for this purpose.
As an alternative, we use Kleene's interpretation, where a realiser of a universal sentence $\forall x A$ is, in essence, the code of a recursive function that maps each natural number \( n \) to a realiser of $A(n)$.
The modifier `in essence' above is merely because it is the notion of `refuter' (not `realiser') which is primitive. The relation between the two in the case of quantifiers is qualified in Lemma~\ref{partial_compositionality}.


In light of the above remarks, we introduce the following definitions.

\begin{definition}\label{defn:pole}
A \textbf{pole} $\pole$ is a subset of $\mathbb{N}$ such that it is conversely closed under computation:
for all \( e,m,n \in \mathbb N \), if $ e \cdot m \simeq n$ and $n \in \pole,$ then $\langle e, m \rangle \in \pole$. 
\end{definition}

Note that the empty set $\emptyset$ and natural numbers $\mathbb{N}$ trivially satisfy the above condition; thus, they are poles.

Given a pole $\pole$ and $\mathcal{L}$-sentence $A$, we define sets $\lVert A \rVert_{\pole} , \lvert A \rvert_{\pole} \subseteq \mathbb{N}$ of, respectively, \emph{refutations} (or a counter-proofs) and \emph{realisations} (or proofs) of $A$.
The sets are defined such that every pair $ \langle n, m \rangle \in \lvert A \rvert_\pole \times \lVert A \rVert_\pole $ of a realisation and refutation is an element of the pole $\pole$.
Thus, $\pole$ can be seen as the set of contradictions.

\begin{definition}\label{defn:classical_number_realisability}
Fix a pole $\pole.$
For each $\mathcal{L}$-sentence $A$, the sets $\lvert A \rvert_{\pole}, \lVert A \rVert_{\pole} \subseteq \mathbb{N}$ are defined as follows. 
The set $\lvert A \rvert_{\pole}$ is defined directly from $\lVert A \rVert_{\pole}$\textup{:}
\[
\lvert A \lvert_{\pole}\ = \{ n \in \mathbb{N} \mid \forall m \in \lVert A \rVert_{\pole}.\ \langle n, m \rangle \in \pole \}.
\]
The set $\lVert A \rVert_{\pole}$ is defined inductively:
\begin{itemize}
\item 
$\lVert s=t \rVert_{\pole} =
\begin{cases}
\mathbb{N}, & \text{if }\mathbb{N} \not\models s=t,\\
\pole, & \text{otherwise.}
\end{cases}$\smallskip

\item $\lVert A \to B \rVert_{\pole} = \{ n \mid (n)_0 \in \lvert A \rvert_{\pole} \text{ and } (n)_1 \in \lVert B \rVert_{\pole} \}$.\medskip

\item $\lVert \forall x A \rVert_{\pole} = \{ n \mid (n)_1 \in \lVert A((n)_0) \rVert_{\pole} \}$.

\end{itemize}
\end{definition}
The motivation for the definitions of $\lVert A \rVert_{\pole}$ and $\lvert A \rvert_{\pole}$ should be clear.
A false equation is refuted by every number whereas a true equation is refuted only by `contradictions', i.e., elements of the pole.
A refutation of $A \to B$ is a pair \( \langle m , n \rangle \) for which $m$ realises $A$ and $n$ refutes $B$.  
A refutation of $\forall x A$ is a pair \( \langle m , n \rangle \) such that $n$ refutes $A(m)$.
Finally, a realiser of $A$ is a number \( n \) that contradicts all refutations of $A$, i.e., $\langle n, m \rangle \in \pole$ for every $m \in \lVert A \rVert_{\pole}$.
In particular, the closure condition on poles implies that every partial recursive function \( \lVert A \rVert_\pole \to \pole \) is a realiser of \( A \): if for every \( n \in \lVert A \rVert_\pole \), \( e \cdot n \) is defined and an element of \( \pole \) then, by definition, \( \langle e , n \rangle \in \pole \) for every \( n \in  \lVert A \rVert_\pole \).
It is also clear that every realiser of \( A \) induces a canonical partial recursive function mapping \(  \lVert A \rVert_\pole \) to \( \pole \).


\subsection{Compositional Theory for Realisability}
\label{subsec:compositional_theory_for_realisability}

A corollary of Tarski's undefinability of truth, the language of $\mathsf{PA}$ is insufficient to express  classical realisability fully without expanding the non-logical vocabulary.
Let \( \Lr \) extend $\mathcal{L}$ by three new predicate symbols:
\begin{itemize}
\item a unary predicate $\pole$ for a pole (written \( x \in \pole \));
\item a binary predicate $ \F $ for refutation (written \( x \F y \));
\item a binary predicate $ \T $ for realisation  (written \( x \T y \)).\footnote{Although $\T$ is definable by $\F$ and $\pole,$ we introduce $\T$ as a primitive to simplify the notation.}
\end{itemize}

\begin{definition}[Compositional Realisability]\label{defn:CR}
The $\Lr$-theory $\mathsf{CR}$ extends $\mathsf{PA}$ formulated over $\Lr$ by the universal closures of the following axioms:
\begin{description}
\item[$(\mathrm{Ax}_{\pole})$] 
$x \cdot y \simeq z \to ( z \in \pole \to \langle x, y \rangle \in \pole) $

\item[$(\mathrm{Ax}_{\T})$] $\forall \gn A \in \pred{Sent}_{\mathcal{L}}. \ a \T \gn A \leftrightarrow \forall b  ( b \F \gn A \to \langle a, b \rangle \in \pole ) $


\item[$(\mathrm{CR}_{=})$] $\forall \gn s, \gn t. \ a \F \gn {s = t}  \leftrightarrow \bigl( \pred{Eq}(\gn s, \gn t) \to a \in \pole \bigr) $

\item[$(\mathrm{CR}_{\to})$] $\forall \gn A ,\gn B \in \pred{Sent}_{\mathcal{L}}. \ a \F\gn {A \to B} \leftrightarrow \bigl((a)_0 \T \gn A \land (a)_1\F\gn B \bigr)$

\item[$(\mathrm{CR}_{\forall})$] $\forall \gn {A_x} \in \pred{Sent}_{\mathcal{L}}. \ a \F\gn {\forall x A} \leftrightarrow (a)_1 \F \gn {A (\dot{a})_0}    $
\end{description}
\end{definition}

\begin{remark}
The universal closure of the axiom $(\mathsf{CR}_{=})$ is equivalent to the conjunction of the following:
\begin{description}
\item[$(\mathrm{CR}_{=1})$] $\forall \gn s \forall \gn t. \ \lnot \pred{Eq} ( \gn s , \gn t) \to \forall a .\ a \F\gn{ s = t } $;

\item[$(\mathrm{CR}_{=2})$] $\forall \gn s \forall \gn t. \ \pred{Eq}( \gn s , \gn t) \to \forall a ( a \F\gn { s = t } \leftrightarrow a \in \pole) $.
\end{description}
\end{remark}

A straightforward formal induction in \( \mathsf{CR} \) verifies the term regularity principle for refutations (cf.~Lemma~\ref{term-regular-CT}).

\begin{proposition}\label{term-regular}
	Refutations are provably invariant under term values:
	\[
	\forall \gn {s} \forall \gn t \forall \gn {A_v} \in \pred{Sent}_{\mathcal{L}}.\  \pred{Eq}(\gn s , \gn t ) \to \forall x( x \F \gn {A(s)} \to x \F \gn {A( t)}).
	\]
\end{proposition}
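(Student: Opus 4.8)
The plan is to prove, by a formal induction available in $\mathsf{CR}$ (which contains $\mathsf{PA}$ formulated over $\Lr$, hence full induction for $\Lr$-formulas), the biconditional strengthening
\[
\forall \gn s \forall \gn t \forall \gn {A_v} \in \pred{Sent}_{\mathcal{L}}.\ \pred{Eq}(\gn s,\gn t) \to \forall x\,( x \F \gn{A(s)} \leftrightarrow x \F \gn{A(t)} ),
\]
from which the proposition is the left-to-right half. The induction is course-of-values induction on the \emph{logical complexity} of the formula coded by $\gn A$, i.e.\ the number of occurrences of $\to$ and $\forall$; the key structural point is that this measure — unlike $\gn A$ itself — is invariant under the term- and numeral-substitution operations on codes that will appear in the $\forall$-case, which is what makes the induction close. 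Proving the biconditional rather than the implication costs nothing, since $\pred{Eq}$ is provably symmetric (so the statement for $(\gn s,\gn t)$ and for $(\gn t,\gn s)$ together yield the biconditional at each complexity level), but it is needed because of the contravariant occurrence of the antecedent in the $\to$-clause.

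For the base case $A$ is an equation, so $\gn{A(s)}$ and $\gn{A(t)}$ code closed equations obtained by substituting $\gn s$, resp.\ $\gn t$, for $v$ in two $\mathcal L$-terms. Since $s=t$ is a true equation, the corresponding term instances evaluate to the same number, so $\pred{Eq}$ holds of the first pair of term-codes iff of the second — a $\Sigma^0_1$ fact about evaluation of closed terms provable in $\mathsf{PA}$. The axiom $(\mathrm{CR}_{=})$ then equates $x \F \gn{A(s)}$ with $\pred{Eq}(\dots)\to x\in\pole$ and $x\F\gn{A(t)}$ with the analogous implication, and these are equivalent. For $A = B\to C$ we have $\gn{A(s)} = \gn{B(s)\to C(s)}$ and likewise for $t$; $(\mathrm{CR}_{\to})$ reduces the goal to the conjunction of $(x)_0\T\gn{B(s)}\leftrightarrow(x)_0\T\gn{B(t)}$ and $(x)_1\F\gn{C(s)}\leftrightarrow(x)_1\F\gn{C(t)}$. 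The second conjunct is the induction hypothesis for $C$. For the first, unfold both sides by $(\mathrm{Ax}_{\T})$ into $\forall b( b\F\gn{B(\cdot)} \to \langle (x)_0,b\rangle\in\pole )$ and apply the induction hypothesis for $B$ — in both directions, which is exactly where the biconditional (equivalently, the symmetry of $\pred{Eq}$) is used.

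For $A = \forall w\,B$ we may assume $w\neq v$, for otherwise $v$ is not free in $A$, so $\gn{A(s)}=\gn{A(t)}=\gn A$ and the claim is trivial. Then $\gn{A(s)} = \gn{\forall w\,(B(s))}$ where $B(s)$ has at most $w$ free, so $(\mathrm{CR}_{\forall})$ reduces $x\F\gn{A(s)}$ to $(x)_1\F$ applied to the code obtained by substituting the numeral of $(x)_0$ for $w$ in $B(s)$. By the substitution lemma for the relevant primitive recursive coding functions — provable in $\mathsf{PA}$ — this code equals $\gn{B'(s)}$, where $B'$ is the result of substituting the numeral of $(x)_0$ for $w$ in $B$: a formula with at most $v$ free and the same logical complexity as $B$, hence strictly smaller than that of $A$. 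The induction hypothesis applied to $B'$ gives $(x)_1\F\gn{B'(s)}\leftrightarrow(x)_1\F\gn{B'(t)}$, and performing the same unfolding on the $t$-side closes the case. The main obstacle is the bookkeeping in this $\forall$-case: correctly commuting the term-substitution and the numeral-substitution at the level of Gödel codes and checking that the complexity measure does not increase, so that the appeal to the induction hypothesis is legitimate; the rest is a direct application of the compositional axioms $(\mathrm{CR}_{=})$, $(\mathrm{CR}_{\to})$, $(\mathrm{CR}_{\forall})$ and $(\mathrm{Ax}_{\T})$, paralleling the truth-side argument of Lemma~\ref{term-regular-CT}.
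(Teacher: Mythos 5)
Your proof is correct and is essentially the argument the paper intends: the paper itself only remarks that Proposition~\ref{term-regular} follows by "a straightforward formal induction in $\mathsf{CR}$" analogous to Lemma~\ref{term-regular-CT}, and what you have written is that induction carried out in detail, with the right choice of induction measure (logical complexity, invariant under the substitutions arising in the $\forall$-case) and the correct observation that the contravariant position in $(\mathrm{CR}_{\to})$ forces you to prove the biconditional (equivalently, to invoke the symmetry of $\pred{Eq}$).
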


We provide a model of $\mathsf{CR}$ based on classical number realisability.
First, the interpretation of the vocabularies of $\mathcal{L}$ is naturally given by the standard model $\mathbb{N}$ of arithmetic.
Second, we fix any pole $\pole \subseteq \mathbb{N}$ for the interpretation of the predicate $x \in \pole$. The sets $\mathbb{T}_{\pole},\mathbb{F}_{\pole} \subseteq \mathbb{N} \times \mathbb{N}$ (for the interpretations of $x \T y$ and $x \F y$, respectively) are defined by the sets $\lvert A \rvert_{\pole}$ and $\lVert A \rVert_{\pole}$ in Definition~\ref{defn:classical_number_realisability}:

\begin{align}
&\mathbb{T}_{\pole} := \{ ( n,m ) \in \mathbb{N}^2 \mid m \ \text{is a code of an } \mathcal{L}\text{-sentence }A \text{ and }n \in \lvert A \rvert_{\pole} \},   \notag \\
&\mathbb{F}_{\pole} := \{ ( n,m ) \in \mathbb{N}^2 \mid m \ \text{is a code of an } \mathcal{L}\text{-sentence }A \text{ and } n \in \lVert A \rVert_{\pole} \}. \notag
\end{align}

Then, the following is clear.
\begin{proposition}\label{prop:soundness_CR}
	Let $\mathbb{N}$ be the standard model of $\mathcal{L}$
	and take any $\Lr$-sentence $A$. 
	If $\mathsf{CR} \vdash A,$ then the $\Lr$-model $\langle \mathbb{N}, \pole, \mathbb{T}_{\pole}, \mathbb{F}_{\pole} \rangle$ satisfies $A$.
\end{proposition}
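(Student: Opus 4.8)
The plan is to verify that the $\Lr$-structure $\mathcal{M} := \langle \mathbb{N}, \pole, \mathbb{T}_{\pole}, \mathbb{F}_{\pole} \rangle$ satisfies every axiom of $\mathsf{CR}$; the proposition then follows by the soundness theorem for classical first-order logic. Since the $\mathcal{L}$-reduct of $\mathcal{M}$ is the standard model $\mathbb{N}$, all true $\mathcal{L}$-sentences hold in $\mathcal{M}$, and --- importantly --- every instance of the induction schema for $\Lr$-formulas holds as well: the domain of $\mathcal{M}$ is literally $\mathbb{N}$ with its standard zero and successor, so the least-number principle applies to the subset of $\mathbb{N}$ defined by an arbitrary $\Lr$-formula, whether or not it mentions $\pole$, $\T$ or $\F$. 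Thus $\mathcal{M}$ is a model of $\mathsf{PA}$ formulated over $\Lr$, and it remains only to check the five compositional axioms.

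Each of these unwinds, after spelling out the interpretations of $\pole$, $\T$ and $\F$, to a clause of Definition~\ref{defn:classical_number_realisability}. The axiom $(\mathrm{Ax}_{\pole})$ is verbatim the closure condition on poles from Definition~\ref{defn:pole}, so it holds by the choice of $\pole$. For $(\mathrm{Ax}_{\T})$, when $m$ codes an $\mathcal{L}$-sentence $A$ we have $(n,m) \in \mathbb{T}_{\pole}$ iff $n \in \lvert A \rvert_{\pole}$, which by definition says $\langle n, b\rangle \in \pole$ for every $b \in \lVert A \rVert_{\pole}$, i.e.\ for every $b$ with $(b,m) \in \mathbb{F}_{\pole}$ --- exactly the right-hand side of the axiom; for $m$ not coding an $\mathcal{L}$-sentence the axiom is vacuous owing to its $\pred{Sent}_{\mathcal{L}}$ guard. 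For $(\mathrm{CR}_{=})$: given codes of closed terms $s,t$, the formula $s=t$ is an atomic sentence, $(a, \gn{s=t}) \in \mathbb{F}_{\pole}$ iff $a \in \lVert s=t \rVert_{\pole}$, and this set is $\mathbb{N}$ when $\mathbb{N} \not\models s=t$ and $\pole$ otherwise; since $\pred{Eq}(\gn s, \gn t)$ is true in $\mathbb{N}$ exactly when $s=t$ is, both cases agree with the right-hand side $\pred{Eq}(\gn s, \gn t) \to a \in \pole$. The clauses $(\mathrm{CR}_{\to})$ and $(\mathrm{CR}_{\forall})$ are checked in the same way, using that when $A, B$ (resp.\ $A$ with at most the distinguished variable free) code $\mathcal{L}$-sentences, the codes $\dra(\gn A, \gn B)$, $\dfa(\gn x, \gn A)$ and $\sub(\gn A, (a)_0)$ delivered by the coding functions are again codes of precisely the formulas named in the corresponding inductive clauses for $\lVert \cdot \rVert_{\pole}$.

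The work here is essentially bookkeeping, and the only points requiring care are: (i) that the inductive definition of $\lVert \cdot \rVert_{\pole}$, hence of $\mathbb{F}_{\pole}$ and $\mathbb{T}_{\pole}$, is well-founded --- which it is, by recursion on the number of logical symbols of the sentence, noting that substituting a numeral for a variable does not increase this measure, so that the clause for $\lVert \forall x A \rVert_{\pole}$ legitimately appeals to $\lVert A(n) \rVert_{\pole}$ for the syntactically smaller $A(n)$; and (ii) that the coding operations $\deq$, $\dra$, $\dfa$ and $\sub$ named in the axioms act on G\"odel codes exactly as the metatheoretic syntactic operations used in Definition~\ref{defn:classical_number_realisability}. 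Both are built into the conventions fixed earlier, so no genuine obstacle arises: once the definitions are placed side by side, the proposition is immediate.
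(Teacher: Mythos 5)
Your proposal is correct and follows essentially the same route as the paper: both arguments reduce to checking that the structure $\langle \mathbb{N}, \pole, \mathbb{T}_{\pole}, \mathbb{F}_{\pole} \rangle$ satisfies each axiom of $\mathsf{CR}$ by unwinding the interpretations of $\pole$, $\T$ and $\F$ against the clauses of Definition~\ref{defn:classical_number_realisability}. Your version is if anything slightly more explicit than the paper's, in that you spell out why induction for arbitrary $\Lr$-formulas holds in the model and why the inductive definition of $\lVert \cdot \rVert_{\pole}$ is well-founded, points the paper leaves implicit.
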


\begin{proof}
The proof is by induction on the derivation of $A$.
If $A$ is an axiom of $\mathsf{PA}$, then the claim immediately holds.
Thus, it is enough to check each axiom of $\mathsf{CR}$.
For example, ($\mathsf{CR}_{\to}$) is satisfied for any pole $\pole$, any $\ulcorner A \urcorner, \ulcorner B \urcorner \in \pred{Sent}_{\mathcal{L}}$, and any $a \in \mathbb{N}$:
\begin{align}
\langle \mathbb{N}, \pole, \mathbb{T}_{\pole}, \mathbb{F}_{\pole} \rangle \models a \F \ulcorner A \to B \urcorner \ &\Leftrightarrow \ a \in \lVert A \to B \rVert_{\pole} \notag \\
&\Leftrightarrow \ (a)_0 \in \lvert A \rvert_{\pole} \ \& \ (a)_1 \in \lVert B \rVert_{\pole} \notag \\
&\Leftrightarrow \ \langle \mathbb{N}, \pole, \mathbb{T}_{\pole}, \mathbb{F}_{\pole} \rangle \models (a)_0 \T \ulcorner A \urcorner \land (a)_1 \F \ulcorner B \urcorner \notag 
\end{align}

Therefore, we have for any pole:
\[ 
\langle \mathbb{N}, \pole, \mathbb{T}_{\pole}, \mathbb{F}_{\pole} \rangle \models \forall \gn A ,\gn B \in \pred{Sent}_{\mathcal{L}}\,. \ \forall a \bigl( a \F \gn {A \to B} \leftrightarrow ((a)_0 \T \gn A \land (a)_1\F \gn B ) \bigr).
\] 

The other axioms are similarly satisfied. \qed
\end{proof}


\section{Formalised Realisation of Peano Arithmetic}\label{sec:formalised_realisation_of_PA}

We demonstrate that the theory of classical number realisability realises every theorem of $\mathsf{PA}$.
In particular, we observe that this is formalisable in $\mathsf{CR}$. For that purpose, the following lemmas are useful.


\begin{lemma}\label{partial_compositionality}
	There exists numbers $\mathsf{i}$, $\mathsf{u}$ and $\mathsf{s}$ such that
\begin{enumerate}
\item
\(
\mathsf{CR} \vdash \forall \gn A, \gn B \in \pred{Sent}_{\mathcal{L}}. \ a \T \gn{ A \to B } \wedge b \T \gn A \to ({\mathsf{i}} \cdot \langle a,b\rangle) T \ulcorner B \urcorner.
\)

\item 
\(
\mathsf{CR} \vdash \forall \gn {A_x}  \in \pred{Sent}_{\mathcal{L}}. \ \forall x \bigl(( a\cdot x ) \T \gn {A(\dot{ x})} \bigr) \to (\mathsf{u}\cdot a)\T \gn {\forall x A}. 
\)

\item 
\(
\mathsf{CR} \vdash \forall \gn {A_x} \in \pred{Sent}_{\mathcal{L}}. \ a \T \gn {\dfa x A} \to \forall y \bigl( ( \mathsf{s} \cdot \langle a,y \rangle) \T \gn {A(\dot{ y})} \bigr).
\)
\end{enumerate}
\end{lemma}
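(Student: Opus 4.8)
The plan is to prove all three items by a single recipe. In each case the witnessing index is produced from the $\mathsf{PA}$-formalised $s$-$m$-$n$ theorem as a primitive recursive (hence total) function of the displayed parameters, chosen so that applying it to those parameters and then feeding in a refutation reconstructs precisely the process that the hypotheses already force into $\pole$. Inside $\mathsf{CR}$ the argument is then uniform: unfold the outer realisation predicate with $(\mathrm{Ax}_{\T})$, rewrite the relevant refutation condition via $(\mathrm{CR}_{\to})$ or $(\mathrm{CR}_{\forall})$, use the hypothesis to place a suitable pair in $\pole$, and transport pole-membership along the pairing by the anti-evaluation closure $(\mathrm{Ax}_{\pole})$. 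This is just the observation following Definition~\ref{defn:classical_number_realisability}, that every partial recursive map $\lVert A\rVert_{\pole}\to\pole$ realises $A$, carried out formally and with explicit indices.

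For item~1, use $s$-$m$-$n$ to fix a number $\mathsf{i}$ with $(\mathsf{i}\cdot\langle a,b\rangle)\cdot d\simeq\langle a,\langle b,d\rangle\rangle$ for all $a,b,d$, provably in $\mathsf{PA}$. Reasoning in $\mathsf{CR}$, fix $\gn A,\gn B\in\pred{Sent}_{\mathcal{L}}$ and assume $a\T\gn{A\to B}$ and $b\T\gn A$. By $(\mathrm{Ax}_{\T})$ it suffices to show $\langle\mathsf{i}\cdot\langle a,b\rangle,d\rangle\in\pole$ for an arbitrary $d$ with $d\F\gn B$. For such $d$, axiom $(\mathrm{CR}_{\to})$ together with $b\T\gn A$ gives $\langle b,d\rangle\F\gn{A\to B}$, so $(\mathrm{Ax}_{\T})$ applied to $a$ yields $\langle a,\langle b,d\rangle\rangle\in\pole$. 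Since $(\mathsf{i}\cdot\langle a,b\rangle)\cdot d\simeq\langle a,\langle b,d\rangle\rangle$, axiom $(\mathrm{Ax}_{\pole})$ gives $\langle\mathsf{i}\cdot\langle a,b\rangle,d\rangle\in\pole$, as required.

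Items~2 and~3 follow the same pattern, now using $(\mathrm{CR}_{\forall})$, which — matching the clause for $\lVert\forall x A\rVert_{\pole}$ in Definition~\ref{defn:classical_number_realisability} — says that $c\F\gn{\forall x A}$ iff $(c)_1\F\gn{A(\dot{(c)_0})}$. For item~2, fix $\mathsf{u}$ with $(\mathsf{u}\cdot a)\cdot c\simeq\langle a\cdot(c)_0,(c)_1\rangle$; assuming $\forall x\,((a\cdot x)\T\gn{A(\dot x)})$ and taking any $c$ with $c\F\gn{\forall x A}$, the hypothesis instantiated at $x=(c)_0$ shows both that $a\cdot(c)_0$, and hence $(\mathsf{u}\cdot a)\cdot c$, converges and that $\langle a\cdot(c)_0,(c)_1\rangle\in\pole$; then $(\mathrm{Ax}_{\pole})$ and $(\mathrm{Ax}_{\T})$ give $(\mathsf{u}\cdot a)\T\gn{\forall x A}$. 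For item~3, fix $\mathsf{s}$ with $(\mathsf{s}\cdot\langle a,y\rangle)\cdot d\simeq\langle a,\langle y,d\rangle\rangle$; assuming $a\T\gn{\forall x A}$, for arbitrary $y$ and $d\F\gn{A(\dot y)}$ we get $\langle y,d\rangle\F\gn{\forall x A}$ by $(\mathrm{CR}_{\forall})$ (since $(\langle y,d\rangle)_0=y$), hence $\langle a,\langle y,d\rangle\rangle\in\pole$ by $(\mathrm{Ax}_{\T})$, and finally $(\mathrm{Ax}_{\pole})$ and $(\mathrm{Ax}_{\T})$ give $(\mathsf{s}\cdot\langle a,y\rangle)\T\gn{A(\dot y)}$.

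The ideas here are minimal; the only delicate points, which I expect to be the main obstacle, are bookkeeping. First, one must fix a uniform reading of the term-abbreviation $a\cdot x$ occurring under $\T$ and check — in item~2 in particular — that the recursion-theoretically constructed indices converge precisely when the hypotheses guarantee their subcomputations do. Second, one must verify the syntactic side conditions: that $\gn{A\to B}$, $\gn{\forall x A}$ and the substitution instances $\gn{A(\dot{(c)_0})}$ and $\gn{A(\dot y)}$ are codes of $\mathcal{L}$-sentences, so that $(\mathrm{Ax}_{\T})$, $(\mathrm{CR}_{\to})$ and $(\mathrm{CR}_{\forall})$ apply, and that the substitution operations behave as the notation suggests. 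All of this is a routine formalisation of elementary recursion theory inside $\mathsf{PA}$.
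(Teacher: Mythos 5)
Your proposal is correct and follows essentially the same route as the paper: the same choices of $\mathsf{i}$, $\mathsf{u}$ and $\mathsf{s}$ (via $s$-$m$-$n$/lambda abstraction), and the same unfold-with-$(\mathrm{Ax}_{\T})$, rewrite-with-$(\mathrm{CR}_{\to})$/$(\mathrm{CR}_{\forall})$, then transport along $(\mathrm{Ax}_{\pole})$ pattern. Your explicit attention to convergence of $a\cdot(c)_0$ in item~2 matches the paper's totality caveat there.
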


The meaning of these functions should be clear, i.e., $\mathsf{i}$ computes a realiser of $B$ from those for $A \to B$ and $A$, and
$\mathsf{u}$ expresses that if there exists a procedure that computes every instance $A(n),$ then $\forall x A$ is realised.
Conversely, $\mathsf{s}$ computes a realiser of $A(n)$ for each $n$.

\begin{proof}
\begin{enumerate}
\item Let $\mathsf{i}$ be such that $\mathsf{i} \cdot \langle a, b \rangle \simeq \lambda x. \langle a,b,x \rangle.$
To show that $\mathsf{i}$ is the required one, we take any $\mathcal{L}$-sentence $A \to B$ and assume $a \T \ulcorner A \to B \urcorner$ and $b \T \ulcorner A \urcorner$.
Then, we must show $(\mathsf{i} \cdot \langle a,b\rangle) \T \ulcorner B \urcorner$.
By the axiom $(\mathrm{Ax}_{\T}),$ taking any $c$ such that $c \F \ulcorner B \urcorner,$
we prove $\langle \mathsf{i} \cdot \langle a,b\rangle, c \rangle \in \pole.$
As $(\mathsf{i} \cdot \langle a,b\rangle) \cdot c \simeq \langle a,b,c \rangle,$ it is sufficient by the axiom $(\mathrm{Ax}_{\pole})$ to show that $\langle a,b,c \rangle$ is in $\pole.$
From the assumptions $b \T \ulcorner A \urcorner$ and $c \F \ulcorner B \urcorner$, as well as the axiom $(\mathrm{Ax}_{\T})$, we obtain $\langle b,c \rangle \F \ulcorner A \to B \urcorner.$
Thus, $(\mathrm{Ax}_{\T})$ yields that $\langle a,b,c \rangle = \langle a, \langle b,c \rangle \rangle \in \pole.$
 
\item Let $\mathsf{u}$ be such that $\mathsf{u} \cdot a \simeq \lambda x. \langle  a \cdot (x)_0, (x)_1 \rangle.$
To prove that this $\mathsf{u}$ is the required one, we take any $a$ and any $\mathcal{L}$-sentence $\forall x A.$
Then, under the assumption that $a$ is total and $\forall x .\, (a\cdot x) \mathrm{T} \ulcorner A(\dot{x}) \urcorner $ holds,
we must show $(\mathsf{u} \cdot a)\T \ulcorner \forall x A \urcorner$.
By the axiom $(\mathrm{Ax}_{\T})$, taking any $b$ such that $b \F \ulcorner \forall x A \urcorner,$
we prove $\langle \mathsf{u} \cdot a, b \rangle \in \pole.$
As $( \mathsf{u} \cdot a) \cdot b \simeq \langle  a \cdot (b)_0, (b)_1 \rangle,$ it is sufficient by the axiom $(\mathrm{Ax}_{\pole})$ to show the latter is in $\pole.$ From the assumption, we obtain the formula $ (a \cdot (b)_0) \T \ulcorner A((\dot{b})_0) \urcorner.$
In addition, by the axiom $(\mathrm{CR}_{\forall})$, we have $(b)_1 \F \ulcorner A((\dot{b})_0) \urcorner$.
Thus, the axiom $(\mathrm{Ax}_{\T})$ implies $\langle a \cdot (b)_0, (b)_1 \rangle \in \pole$.

\item Let $\mathsf{s}$ be such that $\mathsf{s} \cdot \langle a,b \rangle \simeq \lambda c. \langle a,b,c \rangle$,
and we show that this function is a required one. Thus, taking any $\ulcorner \forall x A(x) \urcorner \in \pred{Sent}_{\mathcal{L}}$ and any $a,b,c,$ we assume $a \T \ulcorner \forall x A \urcorner$
and $c \F \ulcorner A(\dot{b}) \urcorner.$
Then, by the axiom $(\mathrm{CR}_{\forall}),$ we obtain $\langle b,c \rangle \F \ulcorner \forall x A \urcorner.$
Thus, it follows that $\langle a, b, c \rangle \in \pole$ by the axiom $(\mathrm{Ax}_{\T})$.
Therefore, the axiom $(\mathrm{Ax}_{\pole})$ implies that $\langle \mathsf{s} \cdot \langle a,b \rangle, c \rangle \in \pole.$
Here, the $c$ is arbitrary; thus, we obtain $(\mathsf{s} \cdot \langle a,b \rangle) \T \ulcorner A(\dot{b}) \urcorner$ again by $(\mathrm{Ax}_{\T})$.
 \qed
\end{enumerate} 
\end{proof}

\begin{lemma}\label{lem:continuation_constant}
There are numbers \( \mathsf{k}_\pi \) and \( \mathsf{k}_\pole \) such that
\begin{enumerate}
\item 
\(
	\mathsf{CR} \vdash \forall \gn A , \gn B \in \pred{Sent}_{\mathcal{L}}. \ a \F \gn A \to (\mathsf{k}_{\pi} \cdot a)\T \ulcorner A \to B \urcorner  .
\)
\item 
\(
	\mathsf{CR} \vdash a \in \pole  \to \forall \ulcorner A \urcorner \in \pred{Sent}_{\mathcal{L}}. \ (\mathsf{k}_{\pole}\cdot a)\T \ulcorner A \urcorner.
\)
\end{enumerate}
\end{lemma}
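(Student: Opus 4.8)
The plan is to exhibit explicit indices $\mathsf{k}_\pi$ and $\mathsf{k}_\pole$ and verify the two claims by unwinding the axioms $(\mathrm{Ax}_\T)$, $(\mathrm{Ax}_\pole)$ and $(\mathrm{CR}_\to)$ inside $\mathsf{CR}$, in exactly the style of the proof of Lemma~\ref{partial_compositionality}. Intuitively, $\mathsf{k}_\pi$ is the Krivine ``$k_\pi$'' continuation constant: given a refutation $a$ of $A$, it builds a realiser of $A\to B$ that, when fed a refutation $\langle (c)_0,(c)_1\rangle$ of $A\to B$, ignores $(c)_1$ and pairs $a$'s partner with $(c)_0$. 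Concretely I would take $\mathsf{k}_\pi$ such that $\mathsf{k}_\pi\cdot a \simeq \lambda c.\, \langle (c)_0, a\rangle$, relying on the $s$-$m$-$n$ theorem (available since $x\cdot y\simeq z$ is $\Sigma^0_1$-definable in $\mathsf{PA}$) to obtain such an index primitive recursively. For $\mathsf{k}_\pole$, the idea is even simpler: a realiser of any sentence can be manufactured from a fixed element $a$ of the pole, using the closure condition on $\pole$; I would take $\mathsf{k}_\pole$ with $\mathsf{k}_\pole\cdot a \simeq \lambda c.\, a$, the constant function returning $a$.

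For part~(1), I argue in $\mathsf{CR}$: fix $\gn A,\gn B\in\pred{Sent}_{\mathcal L}$ and assume $a\F\gn A$. To show $(\mathsf{k}_\pi\cdot a)\T\gn{A\to B}$, by $(\mathrm{Ax}_\T)$ it suffices to take an arbitrary $c$ with $c\F\gn{A\to B}$ and prove $\langle \mathsf{k}_\pi\cdot a, c\rangle\in\pole$. By $(\mathrm{CR}_\to)$, $c\F\gn{A\to B}$ gives $(c)_0\T\gn A$ and $(c)_1\F\gn B$. Now $(\mathsf{k}_\pi\cdot a)\cdot c\simeq \langle (c)_0, a\rangle$, so by $(\mathrm{Ax}_\pole)$ it is enough to show $\langle (c)_0, a\rangle\in\pole$. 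But $(c)_0\T\gn A$ together with the hypothesis $a\F\gn A$ yields exactly $\langle (c)_0,a\rangle\in\pole$ via $(\mathrm{Ax}_\T)$. (Note that $\gn B$ and $(c)_1$ play no role beyond being well-formed, as expected of a continuation constant.)

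For part~(2), I argue in $\mathsf{CR}$: assume $a\in\pole$ and fix $\gn A\in\pred{Sent}_{\mathcal L}$. To show $(\mathsf{k}_\pole\cdot a)\T\gn A$, by $(\mathrm{Ax}_\T)$ take an arbitrary $c$ with $c\F\gn A$; I must show $\langle \mathsf{k}_\pole\cdot a, c\rangle\in\pole$. Since $(\mathsf{k}_\pole\cdot a)\cdot c\simeq a$ and $a\in\pole$, the axiom $(\mathrm{Ax}_\pole)$ immediately gives $\langle \mathsf{k}_\pole\cdot a, c\rangle\in\pole$, as required; here the value of $c$ (indeed the structure of $A$) is irrelevant, which is why one fixed $a\in\pole$ realises every sentence simultaneously.

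The only genuine subtlety — and the step I would be most careful about — is ensuring that the definitions of $\mathsf{k}_\pi$ and $\mathsf{k}_\pole$ as indices of partial recursive functions are legitimate inside $\mathsf{CR}$, i.e.\ that the $\lambda$-notation is backed by a provably total index obtained uniformly via $s$-$m$-$n$, and that the convergence facts $(\mathsf{k}_\pi\cdot a)\cdot c\simeq\langle (c)_0,a\rangle$ and $(\mathsf{k}_\pole\cdot a)\cdot c\simeq a$ are $\mathsf{PA}$-provable so that $(\mathrm{Ax}_\pole)$ can be applied. This is entirely routine (the same device is used silently in Lemma~\ref{partial_compositionality}), so I would state it once and not belabour it. Everything else is a direct chain of biconditional applications of the $\mathsf{CR}$ axioms.
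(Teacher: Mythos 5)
Your proposal is correct and matches the paper's own proof essentially verbatim: the same indices $\mathsf{k}_\pi = \lambda a.\lambda c.\langle (c)_0,a\rangle$ and $\mathsf{k}_\pole = \lambda a.\lambda c.\,a$, and the same chain of applications of $(\mathrm{CR}_\to)$, $(\mathrm{Ax}_\T)$ and $(\mathrm{Ax}_\pole)$. Your explicit remark that the $\lambda$-notation must be backed by $s$-$m$-$n$ and $\mathsf{PA}$-provable convergence is a point the paper leaves implicit, but it changes nothing in substance.
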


\begin{proof}
\begin{enumerate}
\item Let $\mathsf{k}_{\pi} := \lambda a. \lambda b. \langle (b)_0, a \rangle.$ We prove that this $\mathsf{k}_{\pi}$ is the required number.
By taking any $a$ and any $\mathcal{L}$-sentence $A \to B,$ we assume $a \F \ulcorner A \urcorner$.
To demonstrate that $(\mathsf{k}_{\pi} \cdot a)\T \ulcorner A \to B \urcorner$, we take any $b$ such that $b \F \ulcorner A \to B \urcorner$, and then we must prove $\langle \mathsf{k}_{\pi} \cdot a, b \rangle \in \pole$. By the supposition $b \F \ulcorner A \to B \urcorner$ and the axiom $(\mathrm{CT}_{\to})$, it follows that $(b)_0 \T \ulcorner A \urcorner$.
Thus, we obtain $(\mathsf{k}_{\pi} \cdot a) \cdot b \simeq \langle (b)_0, a \rangle \in \pole$, which implies $\langle \mathsf{k}_{\pi} \cdot a, b \rangle \in \pole$ by the axiom $(\mathrm{Ax}_{\pole})$.
\item 
Assuming $a \in \pole$, we define a number $\mathsf{k}_{\pole} := \lambda a. \lambda b. a.$ 
To show $ (\mathsf{k}_{\pole} \cdot a)\T \ulcorner A \urcorner$, we take any $b$ such that $b\F \ulcorner A \urcorner.$ Then, $(\mathsf{k}_{\pole} \cdot a) \cdot b \simeq a \in \pole$; thus, the axiom 
$(\mathrm{Ax}_{\pole})$ implies $\langle \mathsf{k}_{\pole} \cdot a,b \rangle \in \pole.$
Therefore, $ (\mathsf{k}_{\pole} \cdot a)\T \ulcorner A \urcorner$ holds by the axiom $(\mathrm{Ax}_{\T})$. \qed
\end{enumerate}
\end{proof}

Note that the above  $\mathsf{k}_{\pi}$ is the CPS translation of \emph{call with current continuation} (cf. \cite{griffin1989formulae,krivine2003dependent}).
Using $\mathsf{k}_{\pi}$, we can define a realiser for Peirce's law.
In Krivine's formulation, Peirce's law is realised by the constant symbol $\mathsf{cc}$.
Thus, our formulation is more similar to Oliva and Streicher's formulation of classical realisability \cite{oliva2008krivine} in that these constants are definable, i.e., they are not introduced as primitive symbols.

With the above preparations, we can now show the formalised realisability of $\mathsf{PA}$.
\begin{theorem}\label{formal_realisability_PA}
We assume that $\mathsf{PA}$ is formulated in the language $\mathcal{L}.$
For each $\mathcal{L}$-formula $A$, if $\mathsf{PA} \vdash A$, then there exists a closed term $s$ such that $\mathsf{CR} \vdash s \T \gn A $.
Moreover, this claim is formally expressible in $\mathsf{CR}$, i.e., we can find a number $\mathsf{k}_{\mathsf{PA}}$ such that: 
\begin{displaymath}
\mathsf{CR} \vdash \forall \ulcorner A \urcorner \in \pred{Sent}_{\mathcal{L}}. \ \pred{Bew}_{\mathsf{PA}}(x,\ulcorner A \urcorner) \to (\mathsf{k}_{\mathsf{PA}} \cdot x)\T \ulcorner A \urcorner,
\end{displaymath}
where $\pred{Bew}_{\mathsf{PA}}(x,y)$ is a canonical provability predicate for $\mathsf{PA}$, expressing that $x$ is a code of the proof of a sentence $y$. 
\end{theorem}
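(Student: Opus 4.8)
The plan is to prove the first (non-formalised) claim by induction on a suitable Hilbert-style derivation of $A$ in $\mathsf{PA}$, building a realiser uniformly from the realisers of the premises, and then observe that the construction is primitive recursive in the code of the derivation, which yields the formalised statement with $\mathsf{k}_{\mathsf{PA}}$ essentially defined by recursion on the proof tree. I would fix a concrete Hilbert calculus for classical first-order logic (say: the usual axiom schemas for $\to$, the $\forall$-axioms, Peirce's law or double-negation as the only genuinely classical axiom, with modus ponens and generalisation as rules) together with the $\mathsf{PA}$-specific axioms (equality axioms, defining equations for the primitive recursive function symbols, and the induction schema). For each axiom I must exhibit an explicit number realising its universal closure (more precisely, its code), and for each rule I must exhibit a computable function sending realisers of the premises to a realiser of the conclusion.

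The rule cases are already essentially done: modus ponens is handled by $\mathsf{i}$ from Lemma~\ref{partial_compositionality}(1), and generalisation — passing from a uniform family of realisers of $A(\num n)$ to a realiser of $\forall x A$ — is handled by $\mathsf{u}$ from Lemma~\ref{partial_compositionality}(2), using Lemma~\ref{partial_compositionality}(3) to extract instances when needed; note that to apply $\mathsf{u}$ I first pass from ``$s$ realises $\gn{A(x)}$ for all $x$'' to ``there is a total $a$ with $a\cdot\num n$ realising $\gn{A(\dot{\num n})}$'', which is immediate by taking $a = \lambda x.\, s$ since $s$ does not depend on $x$. For the purely propositional axioms the standard CPS/Krivine realisers work: the $\mathsf{S}$, $\mathsf{K}$ combinators realise the $\to$-axioms, and Peirce's law is realised using $\mathsf{k}_{\pi}$ from Lemma~\ref{lem:continuation_constant}(1) exactly as indicated in the text (the $\mathsf{cc}$ of Krivine's calculus). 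For the $\forall$-axioms ($\forall x A \to A(t)$ and $A \to \forall x A$ when $x$ not free in $A$) one combines $\mathsf{s}$, $\mathsf{u}$ with the term-regularity principle (Proposition~\ref{term-regular}) to handle the substitution of a closed term $t$ of arbitrary syntactic shape.

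The arithmetical axioms split into two kinds. The equality axioms and the primitive recursive defining equations are all (provably in $\mathsf{CR}$, via $(\mathrm{CR}_{=})$ and Proposition~\ref{term-regular}) true closed equations or universal closures thereof, so a realiser is produced by: first realising each closed instance $s=t$ — here $\pred{Eq}(\gn s,\gn t)$ holds, so by $(\mathrm{CR}_{=2})$ a refuter of $\gn{s=t}$ is exactly an element of $\pole$, hence the identity-like term $\lambda c.\,c$ realises it — and then universally quantifying via $\mathsf{u}$. The genuinely substantial case is the induction schema: given (a realiser of) $B(\num 0)$ and (a uniform realiser of) $\forall x(B(x)\to B(x+1))$ I must produce a realiser of $\forall x\, B(x)$, i.e. (after $(\mathrm{CR}_{\forall})$ and $(\mathrm{Ax}_{\T})$) given a number $n$ and a refuter $c$ of $\gn{B(\num n)}$, I must land in $\pole$. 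The natural move is to iterate the step realiser $n$ times — define by primitive recursion a term $\mathsf{r}$ with $\mathsf{r}\cdot\langle a_0,a_{\mathrm{step}},0\rangle \simeq a_0$ and $\mathsf{r}\cdot\langle a_0,a_{\mathrm{step}},n+1\rangle \simeq \mathsf{i}\cdot\langle \mathsf{s}\cdot\langle a_{\mathrm{step}},n\rangle,\ \mathsf{r}\cdot\langle a_0,a_{\mathrm{step}},n\rangle\rangle$ — and prove by (meta)induction, formalised as an $\mathcal{L}$-induction inside $\mathsf{CR}$ on $n$, that $\mathsf{r}\cdot\langle a_0,a_{\mathrm{step}},n\rangle$ realises $\gn{B(\dot{\num n})}$. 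This is exactly where the realisability interpretation ``uses'' the induction available in the base theory $\mathsf{CR}\supseteq\mathsf{PA}$, and it is the main obstacle: one has to set up the recursion on terms carefully (Kleene's $T$-predicate bookkeeping, totality of $\mathsf{r}$ on the relevant inputs) and check that the internal induction goes through with the predicate ``$\mathsf{r}\cdot\langle\dots,n\rangle \T \gn{B(\dot{\num n})}$'', which is a legitimate $\Lr$-formula and hence an instance of $\mathsf{PA}$-induction in $\Lr$.

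Finally, for the formalised statement: each of the finitely many axiom cases above produced an explicit numeral, and each rule produced an explicit index of a partial recursive function; since a $\mathsf{PA}$-derivation coded by $x$ is a finite object whose structure ($\pred{Bew}_{\mathsf{PA}}$) is primitive recursive, one defines $\mathsf{k}_{\mathsf{PA}}$ so that $\mathsf{k}_{\mathsf{PA}}\cdot x$ recurses on the derivation tree coded by $x$, at each node applying the appropriate combinator, and reads off a realiser of the end-sentence. The verification that $(\mathsf{k}_{\mathsf{PA}}\cdot x)\T\gn A$ is then carried out by an $\mathcal{L}$-induction in $\mathsf{CR}$ on (the length of) $x$ — i.e. on $\pred{Bew}_{\mathsf{PA}}(x,\gn A)$ — whose step cases are precisely the lemmas invoked above, with the induction-axiom case of $\mathsf{PA}$ itself handled by the internal argument of the previous paragraph. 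The only care needed is that $\mathsf{k}_{\mathsf{PA}}$ be shown total on codes of genuine derivations, which again follows by the same induction.
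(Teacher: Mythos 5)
Your proposal is correct and follows essentially the same route as the paper: induction on the $\mathsf{PA}$-derivation, with $\mathsf{i}$, $\mathsf{u}$, $\mathsf{s}$ and $\mathsf{k}_{\pi}$ from Lemmas~\ref{partial_compositionality} and~\ref{lem:continuation_constant} handling modus ponens, generalisation, the quantifier axioms and Peirce's law, and the induction schema handled exactly as in the paper by a recursion-theorem-defined iterator of the step realiser whose correctness is verified by an internal $\Lr$-induction in $\mathsf{CR}$. You in fact spell out several routine cases (the $\to$-axioms via $\mathsf{S}$/$\mathsf{K}$, the equality and defining-equation axioms via $(\mathrm{CR}_{=2})$ and term regularity) that the paper dismisses with ``the other cases are treated in a similar manner''.
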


\begin{proof}
The proof is by induction on the length of the derivation of $A$ in $\mathsf{PA}$.
Here, we divide the cases by the last axiom or rule.

\textbf{Peirce's law} Assume that $A = ((B \to C) \to B) \to B$. According to Lemmas~\ref{partial_compositionality}~and~\ref{lem:continuation_constant}, we define a term $s$ as follows: 
\[s = \lambda b. \langle \mathsf{i} \cdot \langle (b)_0, \mathsf{k}_{\pi} \cdot (b)_1 \rangle, (b)_1 \rangle.\]
We show that $s \T \ulcorner ((B \to C) \to B) \to B \urcorner.$ Thus, taking any $b$ satisfying $b \F \ulcorner ((B \to C) \to B) \to B \urcorner,$
we prove $\langle s,b\rangle \in \pole.$ 
By the axiom $(\mathrm{Ax}_{\pole}),$ it is sufficient to show $\langle \mathsf{i} \cdot \langle (b)_0, \mathsf{k}_{\pi} \cdot (b)_1 \rangle, (b)_1 \rangle \in \pole.$
From the axiom $(\mathrm{CR}_{\to}),$ we obtain $(b)_0 \T \ulcorner (B \to C) \to B \urcorner$ and $(b)_1 \F \ulcorner B \urcorner$. Thus, by Lemma~\ref{lem:continuation_constant}, we obtain $(\mathsf{k}_{\pi} \cdot (b)_1) \T \ulcorner B \to C \urcorner,$
which implies that $ (\mathsf{i} \cdot \langle (b)_0, \mathsf{k}_{\pi} \cdot (b)_1 \rangle) \T \ulcorner B \urcorner$ by Lemma~\ref{partial_compositionality}.
Thus, by the axiom $(\mathrm{Ax}_{\T})$, we can derive the required formula $\langle \mathsf{i} \cdot \langle (b)_0, (\mathsf{k}_{\pi} \cdot (b)_1) \rangle, (b)_1 \rangle \in \pole$.

\textbf {Induction schema} Assume $A = B(0) \to ( \forall x (B(x) \to B(x+1)) \to \forall x B(x) )$.
We take any $b \F \gn {B(0) \to ( \forall x (B(x) \to B(x+1)) \to \forall x B(x) )} $.
Then, we obtain the following:
\begin{itemize} 
\item $(b)_0 \T \ulcorner B(0) \urcorner$;
\item $((b)_1)_0 \T \ulcorner \forall x (B(x) \to B(x+1)) \urcorner$; 
\item $((b)_1)_1 \F \ulcorner \forall x (B(x)) \urcorner$.
\end{itemize}
By the recursion theorem, choose a number $\mathsf{k}$ such that
\begin{enumerate}
	\item \( (\mathsf{k} \cdot b) \cdot 0 \simeq (b)_0  \)
	\item \( (\mathsf{k} \cdot b)\cdot (n+1) \simeq \mathsf{i} \cdot \langle \mathsf{s} \cdot \langle ((b)_1)_0, n \rangle, (\mathsf{k} \cdot b ) \cdot n \rangle \) for each $n$.
\end{enumerate}
Then, we obtain $\forall x .\, ((\mathsf{k} \cdot b) \cdot x) \T \ulcorner B(\dot{x})\urcorner $; thus,
Lemma~\ref{partial_compositionality} yields the formula $\langle \mathsf{u} \cdot (\mathsf{k} \cdot b), ((b)_1)_1 \rangle \in \pole.$
Therefore, for the term $s := \lambda b. \langle \mathsf{u} \cdot (\mathsf{k} \cdot b), ((b)_1)_1 \rangle,$
we have $\langle s, b \rangle \in \pole$ by the axiom $(\mathrm{Ax}_{\pole})$, and thus $s \T \ulcorner A \urcorner$ follows.

Note that the other cases are treated in a similar manner. \qed
\end{proof}


\section{Proof-Theoretic Strength of Compositional Realisability}\label{sec:proof_theoretic_strength_of_CR}
In Section~\ref{sec:formalised_realisation_of_PA}, we observed that $\mathsf{CR}$ is expressively strong enough to formalise the classical number realisability.
In the following, we turn to the proof-theoretic strength of $\mathsf{CR}$ and its relationship with $\mathsf{CT}$.
First, we show the conservativity of $\mathsf{CR}$ over $\mathsf{PA}$.
\begin{proposition}\label{prop:conservativity_CR}
$\mathsf{CR}$ is conservative over $\mathsf{PA}.$
\end{proposition}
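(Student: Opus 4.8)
The plan is to prove conservativity semantically, via a model-theoretic argument: given any model $\mathcal{M} \models \mathsf{PA}$, I would expand it to a model of $\mathsf{CR}$ without adding or removing any $\mathcal{L}$-sentences, so that every $\mathcal{L}$-consequence of $\mathsf{CR}$ is already a consequence of $\mathsf{PA}$. The natural candidate is to mimic, inside $\mathcal{M}$, the construction of the standard model $\langle \mathbb{N}, \pole, \mathbb{T}_\pole, \mathbb{F}_\pole \rangle$ from Proposition~\ref{prop:soundness_CR}, taking the simplest possible pole, namely $\pole = \emptyset$ (or, if one prefers, $\pole$ the full domain — either choice trivially satisfies the closure axiom $(\mathrm{Ax}_\pole)$). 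With $\pole = \emptyset$, the defining clauses of Definition~\ref{defn:classical_number_realisability} collapse considerably: $\lVert s = t \rVert_\emptyset$ is either the whole domain (if $s=t$ is false) or $\emptyset$ (if true), and $\lvert A \rvert_\emptyset$ is nonempty iff $\lVert A \rVert_\emptyset = \emptyset$. The point is that these clauses are given by a straightforward primitive recursion on the syntactic complexity of (the code of) $A$, and such recursions can be carried out inside any model of $\mathsf{PA}$ using a $\Sigma^0_1$ or arithmetical definition; the resulting relations $\mathbb{T}$ and $\mathbb{F}$ on $\mathcal{M}$ are definable in $\mathcal{M}$ itself.

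The key steps, in order, are as follows. First I would isolate an $\mathcal{L}$-formula (arithmetical, using a course-of-values recursion coded in the usual way) defining the relation ``$a \mathrel{F} c$'', i.e. ``$c$ codes an $\mathcal{L}$-sentence and $a \in \lVert c \rVert_\emptyset$'', together with the companion formula for $\mathrel{T}$ obtained from it by the clause of $(\mathrm{Ax}_\T)$ — with $\pole=\emptyset$ this just says $a \mathrel{T} c$ iff no $b$ satisfies $b \mathrel{F} c$. One must check this recursion is well-founded along the subformula ordering, which is provable in $\mathsf{PA}$; since $\mathsf{PA}$ proves the totality and basic properties of such definitions by recursion on a primitive recursive well-founded relation, the defined predicates exist and satisfy their defining equations in every model of $\mathsf{PA}$. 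Second, working in an arbitrary $\mathcal{M} \models \mathsf{PA}$, I would interpret $x \in \pole$ as always false (the empty set), and $x \mathrel{T} y$, $x \mathrel{F} y$ by the formulas just described, and then verify one by one that the axioms $(\mathrm{Ax}_\pole)$, $(\mathrm{Ax}_\T)$, $(\mathrm{CR}_=)$, $(\mathrm{CR}_\to)$, $(\mathrm{CR}_\forall)$ hold under this interpretation — each is immediate from the corresponding defining clause, exactly as in the proof of Proposition~\ref{prop:soundness_CR}, only now relativised to $\mathcal{M}$ rather than $\mathbb{N}$. Crucially, the induction scheme of $\mathsf{CR}$ is for all $\Lr$-formulas, so I must check that instances of induction in the expanded language still hold; this is automatic because $\T$ and $\F$ are $\mathcal{L}$-definable in $\mathcal{M}$, so every $\Lr$-formula is equivalent in the expansion to an $\mathcal{L}$-formula, and $\mathcal{M} \models \mathsf{PA}$ already gives full induction for those.

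Finally, conservativity follows by the standard argument: if $\mathsf{CR} \vdash B$ for some $\mathcal{L}$-sentence $B$ but $\mathsf{PA} \nvdash B$, take $\mathcal{M} \models \mathsf{PA} + \neg B$, expand it as above to a model of $\mathsf{CR}$ which still satisfies $\neg B$ (since $B$ is an $\mathcal{L}$-sentence and the domain and $\mathcal{L}$-structure are unchanged), contradicting soundness. The main obstacle — really the only non-routine point — is verifying that the inductive definition of $\lVert \cdot \rVert_\emptyset$ can genuinely be captured by a single $\mathcal{L}$-formula whose defining properties are provable in $\mathsf{PA}$, i.e. setting up the recursion on subformula complexity correctly (tracking the quantifier clause, where $\lVert \forall x A \rVert_\emptyset$ refers to $\lVert A((n)_0) \rVert_\emptyset$ for the substitution instance, which has the same or smaller complexity). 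Once the coding is in place, everything else is a direct transcription of the clauses. An alternative, purely syntactic route — giving a proof-theoretic translation of $\mathsf{CR}$-derivations into $\mathsf{PA}$-derivations by interpreting $\T$, $\F$ away via these arithmetical formulas — would also work and avoids appeal to completeness, but the model-theoretic phrasing is cleaner to present here.
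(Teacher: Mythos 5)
Your approach contains a fatal gap, and it is exactly the point you flag as ``the only non-routine point'': with $\pole = \emptyset$ the relation ``$a \in \lVert A \rVert_\emptyset$'' is \emph{not} definable by any $\mathcal{L}$-formula, and no amount of careful coding will repair this. The recursion of Definition~\ref{defn:classical_number_realisability} is not a primitive recursion: the clause for $\lVert A \to B\rVert_\emptyset$ asks whether $(n)_0 \in \lvert A\rvert_\emptyset$, which for the empty pole means deciding whether $\lVert A \rVert_\emptyset$ is empty --- an unbounded quantifier over the whole domain. The arithmetical complexity of the resulting predicate therefore grows without bound in the quantifier depth of $A$, and since $\gn A$ ranges over all $\mathcal{L}$-sentences no single arithmetical formula can capture it. Indeed, Lemma~\ref{lem:CR_with_empty} shows that with $\pole=\emptyset$ the predicate $0\T x$ is a full compositional truth predicate for $\mathcal L$, so an arithmetical definition of your $\F$ would contradict Tarski's undefinability theorem. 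Worse, the structure you are trying to build is a model of $\mathsf{CR}^{\emptyset}$ with full $\Lr$-induction (your own argument needs the definability precisely to recover induction), and by Proposition~\ref{prop:CT_and_CR_with_empty} and Theorem~\ref{fact:strength_CT} that theory proves $\mathrm{TI}({<}\varepsilon_{\varepsilon_0})$ and hence the consistency of $\mathsf{PA}$; it is not conservative over $\mathsf{PA}$, so no such definable expansion of an arbitrary model of $\mathsf{PA}$ can exist.

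The parenthetical alternative you mention --- taking $\pole$ to be the whole domain --- is the choice that works, and it collapses to the paper's actual proof. With $\pole = \mathbb N$ one checks from Definition~\ref{defn:classical_number_realisability} that every number refutes and realises every sentence, so $x\in\pole$, $x\T y$ and $x\F y$ may all be interpreted by the trivially true formula $0=0$; each axiom of $\mathsf{CR}$ then becomes a propositional triviality, every $\Lr$-induction instance reduces to an $\mathcal L$-instance, and the whole argument is the purely syntactic translation into $\mathsf{PA}$ that you sketch as an ``alternative route'' at the end. The asymmetry between the two poles is the real content here: the empty pole makes realisability coincide with truth (and hence strong), while the full pole makes it vacuous (and hence conservative).
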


\begin{proof}
We define a translation $\mathcal{T} \colon \Lr \to \mathcal{L}$ such that the vocabularies of $\mathcal{L}$ are unchanged, and then we show the following:
\begin{center} 
for $\Lr$-formula $A$, if $\mathsf{CR} \vdash A,$ then $\mathsf{PA} \vdash \mathcal{T}(A).$
\end{center}
If $A \in \mathcal{L},$ we have $\mathcal{T}(A) = A$; thus, the conservativity follows.

The translation $\mathcal{T}$ is defined as follows:
\begin{itemize}
\item $\mathcal{T}(s=t)= s=t$;
\item $\mathcal{T}(s \in \pole) = \mathcal{T}(s\F t) = \mathcal{T}(s\T t) = (0=0);$
\item $\mathcal{T}$ commutes with the logical symbols.
\end{itemize}
Roughly speaking, each pair is contradictory, and each sentence is realised and refuted by every number under this interpretation.
Therefore, we can easily see that the translation of each axiom of $\mathsf{CR}$ is derivable in $\mathsf{PA}.$ \qed
\end{proof}

\subsection{Compositional Realisability as Compositional Truth}

Although $\mathsf{CR}$ itself is proof-theoretically weak, here, we show that some assumption on the pole provides $\mathsf{CR}$ with the same strength as $\mathsf{CT}$. 

\begin{lemma}\label{lem:CR_with_empty}
Let $\pole = \emptyset$ denote the sentence $\neg \exists x (x \in \pole)$, and let $\mathsf{CR}^{\emptyset}$ be 
$\mathsf{CR}$ augmented with $\pole = \emptyset$. Then, $\mathsf{CR}^{\emptyset}$ can define the truth prediate of $\mathsf{CT}$
as, e.g., the predicate $0 \T x$. In other words, $\mathsf{CR}^{\emptyset}$ derives the following:
\begin{description}

\item[$(\mathrm{CT}_{=})'$] $\forall \gn s, \gn t. \ 0 \T \gn {s = t}  \leftrightarrow \pred{Eq}(\gn s, \gn t) $

\item[$(\mathrm{CT}_{\to})'$] $\forall \gn A , \gn B \in \pred{Sent}_{\mathcal{L}}. \ 0 \T\gn {A \to B} \leftrightarrow ( 0 \T \gn A \to 0 \T \gn B )$

\item[$(\mathsf{CT}_{\forall})'$] $\forall \gn {A_x} \in \pred{Sent}_{\mathcal{L}}. \ 0\T\ulcorner \forall x A \urcorner \leftrightarrow \forall x \,0\T \ulcorner A(\dot{x}) \urcorner $.
\end{description}
Therefore, every $\mathcal{L}$-theorem of $\mathsf{CT}$ is derivable in $\mathsf{CR}^{\emptyset}$.
\end{lemma}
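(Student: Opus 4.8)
The plan is to work in $\mathsf{CR}^\emptyset$ and derive each of the three biconditionals $(\mathrm{CT}_=)'$, $(\mathrm{CT}_\to)'$, $(\mathrm{CT}_\forall)'$ by unwinding the compositional axioms of $\mathsf{CR}$ under the extra hypothesis $\neg\exists x(x\in\pole)$. The key simplification provided by $\pole=\emptyset$ is that the clause $\langle a,b\rangle\in\pole$ in $(\mathrm{Ax}_\T)$ becomes simply false, so that, provably in $\mathsf{CR}^\emptyset$,
\[
a\T\gn A \;\leftrightarrow\; \forall b\,\neg(b\F\gn A)
\]
for every $\gn A\in\pred{Sent}_{\mathcal L}$; in particular $a\T\gn A$ does not depend on $a$, and $0\T\gn A$ holds iff $\gn A$ has no refutation. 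Likewise $(\mathrm{CR}_{=1})$/$(\mathrm{CR}_{=2})$ collapse: if $\neg\pred{Eq}(\gn s,\gn t)$ then every $a$ refutes $\gn{s=t}$, while if $\pred{Eq}(\gn s,\gn t)$ then $a\F\gn{s=t}$ becomes $a\in\pole$, hence is false for every $a$. So the first step is to record these two consequences of $\pole=\emptyset$ as auxiliary lemmas inside $\mathsf{CR}^\emptyset$.

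With these in hand the three clauses are immediate. For $(\mathrm{CT}_=)'$: $0\T\gn{s=t}$ iff no $a$ refutes $\gn{s=t}$; by the collapsed equality axioms this holds iff $\pred{Eq}(\gn s,\gn t)$ (if $\neg\pred{Eq}$ there is a refuter, namely any number; if $\pred{Eq}$ there is none). For $(\mathrm{CT}_\to)'$: by $(\mathrm{CR}_\to)$, $a\F\gn{A\to B}$ iff $(a)_0\T\gn A\wedge(a)_1\F\gn B$; so $\gn{A\to B}$ has a refuter iff for some $a$, $(a)_0\T\gn A$ and $(a)_1\F\gn B$, i.e.\ iff $0\T\gn A$ (equivalently $\exists c\,c\T\gn A$, which is closed under change of realiser) and $\gn B$ has a refuter, i.e.\ iff $0\T\gn A\wedge\neg\,0\T\gn B$. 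Negating and using the collapsed $(\mathrm{Ax}_\T)$ gives $0\T\gn{A\to B}\leftrightarrow(0\T\gn A\to 0\T\gn B)$; here one must be slightly careful to pair up witnesses, using the pairing function so that existence of $(a)_0$ with the first property and $(a)_1$ with the second is equivalent to existence of a single $a$ refuting $\gn{A\to B}$. For $(\mathrm{CT}_\forall)'$: by $(\mathrm{CR}_\forall)$, $a\F\gn{\forall xA}$ iff $(a)_1\F\gn{A(\dot{(a)_0})}$, so $\gn{\forall xA}$ has a refuter iff for some $x$ the sentence $\gn{A(\dot x)}$ has a refuter; negating gives $0\T\gn{\forall xA}\leftrightarrow\forall x\,0\T\gn{A(\dot x)}$. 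Finally, these are precisely the $\mathsf{CT}$ axioms with $\T$ replaced by $\lambda x.\,0\T x$, and since $\mathsf{CR}^\emptyset$ contains $\mathsf{PA}$ over $\Lr\supseteq\Lt$ with full induction (in particular induction for formulas mentioning $0\T x$), every $\mathcal L$-theorem of $\mathsf{CT}$ is a theorem of $\mathsf{CR}^\emptyset$ via the identity translation on $\mathcal L$ together with this definition of the truth predicate.

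I expect the main (very mild) obstacle to be the bookkeeping in the $\to$-clause: matching the quantifier over a single refuter $a$ of $\gn{A\to B}$ with the pair of independent conditions on $(a)_0$ and $(a)_1$, and checking that ``$\exists c\, c\T\gn A$'' may be replaced by ``$0\T\gn A$'' — both of which follow from the observation that under $\pole=\emptyset$ the predicate $a\T\gn A$ is independent of $a$. The other genuine point worth stating explicitly is that the last sentence of the lemma requires induction in the expanded language to be available, which it is, since $\mathsf{CR}$ is by definition $\mathsf{PA}$ formulated over $\Lr$.
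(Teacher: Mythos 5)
Your proposal is correct and takes essentially the same route as the paper: both first collapse $(\mathrm{Ax}_{\T})$ under $\pole=\emptyset$ to $a\T\gn A\leftrightarrow\forall b\,\neg(b\F\gn A)$ (so realisation is independent of the realiser), then unwind $(\mathrm{CR}_{=})$, $(\mathrm{CR}_{\to})$ and $(\mathrm{CR}_{\forall})$ with the same pairing bookkeeping in the implication case. The closing remark about full induction in the expanded language matching Lemma~\ref{fact:tarski_biconditional} is also how the paper obtains the final claim.
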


\begin{proof}
By $\pole = \emptyset$ and the axiom $(\mathrm{Ax}_{\T})$, we easily obtain the following: 
\begin{displaymath}
0\T\ulcorner A \urcorner 
\leftrightarrow
\forall b (\neg b \F \ulcorner A \urcorner). 
\end{displaymath}
With this, we can derive the formulas $(\mathrm{CT}_{=})'$, $(\mathrm{CT}_{\to})'$, and $(\mathrm{CT}_{\forall})'$. 
\begin{description}
\item[$(\mathrm{CT}_{=})'$] In $\mathsf{CR}^{\emptyset}$, we deduce as follows.
	\begin{align*}
	0\T\gn{ s=t } &\Leftrightarrow \forall b (\neg b \F \gn{ s=t } ) &&\text{by $(\mathrm{Ax}_{\T})$ and $\pole = \emptyset$} \\
	&\Rightarrow \pred{Eq}(s, t) &&\text{by $(\mathrm{CR}_{=1}$)} \\
	&\Rightarrow \forall b (\neg b \F \gn{ s=t } )&& \text{by $(\mathrm{CR}_{=2})$ and $\pole = \emptyset$}
	\end{align*}
	
\item[$(\mathrm{CT}_{\to})'$] 
\begin{align*}
0\T\ulcorner A \to B \urcorner &\Leftrightarrow \neg \exists b (b\F \ulcorner A \to B \urcorner) &&\text{by $(\mathrm{Ax}_{\T})$ and $\pole = \emptyset$} \\
&\Leftrightarrow \neg \exists b ((b)_0\T \ulcorner A \urcorner \land (b)_1 \F \ulcorner B \urcorner) &&\text{by $(\mathrm{CR}_{\to})$} \\
&\Leftrightarrow \exists x (x\T\ulcorner A \urcorner) \to \neg \exists b (b\F \ulcorner B \urcorner) &&\text{by $\mathsf{PA}$} \\
&\Leftrightarrow 0\T\ulcorner A \urcorner \to 0\T\ulcorner B \urcorner
&&\text{by $(\mathrm{Ax}_{\T})$ and $\pole = \emptyset$}
\end{align*}

\item[$(\mathrm{CT}_{\forall})'$] 
\begin{align*}
0\T \gn {\forall x A} &\Leftrightarrow \forall b \, (\lnot b\F \gn {\forall x A}) &&\text{by $(\mathrm{Ax}_{\T})$ and $\pole = \emptyset$} \\
&\Leftrightarrow \forall b \, \bigl( \lnot  (b)_1\F \gn {A (\dot{b}_0 ) }  \bigr) &&\text{by $(\mathrm{CR}_{\forall} )$} \\
&\Leftrightarrow \forall x \forall y \, \lnot (y\F \ulcorner A(\dot{x}) \urcorner)  &&\text{by logic} \\
&\Leftrightarrow \forall x\, (0\T\ulcorner A(\dot{x}) \urcorner) &&\text{by $(\mathrm{Ax}_{\T})$ and $\pole = \emptyset$} 
\end{align*}
\end{description}
The other cases are similar. \qed
\end{proof}

Thus, the assumption $\pole = \emptyset$ reduces truth to realisability.
Next, we give another characterisation of this assumption using a kind of reflection principle stating that realisability is subsumed by truth.

\begin{lemma}\label{lem:reflection_empty_CR}
Over $\mathsf{CR}$, the following are equivalent.
\begin{enumerate}
\item The reflection schema: \( \exists x (x\T \ulcorner A \urcorner) \to A  \)  for every $\mathcal{L}$-sentence $A$. \label{reflection_Lschema}
\item The axiom: \( \pole = \emptyset \). \label{pole_is_empty}
\end{enumerate}
\end{lemma}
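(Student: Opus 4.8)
The plan is to prove the two implications separately.

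For the direction \(\ref{pole_is_empty} \Rightarrow \ref{reflection_Lschema}\), I would argue that when \(\pole = \emptyset\), realisability is equivalent to truth, so the reflection schema is just an instance of the Tarski biconditional. Concretely: assume \(\pole = \emptyset\) and fix an \(\mathcal L\)-sentence \(A\). By Lemma~\ref{lem:CR_with_empty}, \(\mathsf{CR}^\emptyset\) proves \((\mathrm{CT}_=)'\), \((\mathrm{CT}_\to)'\) and \((\mathrm{CT}_\forall)'\), i.e. the predicate \(0\T x\) satisfies the compositional truth axioms. A straightforward external induction on the build-up of \(A\) then shows \(\mathsf{CR}^\emptyset \vdash 0\T\gn A \leftrightarrow A\) (this is exactly the argument for Lemma~\ref{fact:tarski_biconditional}, carried out inside \(\mathsf{CR}^\emptyset\)). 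Moreover, by \((\mathrm{Ax}_\T)\) together with \(\pole=\emptyset\) one has \(\exists x(x\T\gn A) \leftrightarrow 0\T\gn A\): if some \(b\F\gn A\) existed then no \(a\) could satisfy \(\forall b(b\F\gn A \to \langle a,b\rangle\in\pole)\) since \(\pole\) is empty, so \(x\T\gn A\) is equivalent to \(\neg\exists b(b\F\gn A)\), independently of \(x\). Combining, \(\exists x(x\T\gn A) \to A\) follows.

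For the direction \(\ref{reflection_Lschema} \Rightarrow \ref{pole_is_empty}\), the key point is to exhibit a \emph{specific} \(\mathcal L\)-sentence whose realisability is provable outright in \(\mathsf{CR}\) but which, if reflected, forces \(\pole\) to be empty. The natural candidate is \(A := (\num 0 = \num 1)\), i.e. \(\bot\). I would show \(\mathsf{CR} \vdash a\in\pole \to a\T\gn\bot\): indeed by \((\mathrm{CR}_=)\), \(b\F\gn{\num0=\num1}\) holds iff \(\pred{Eq}(\gn{\num0},\gn{\num1}) \to b\in\pole\), and since \(\pred{Eq}(\gn{\num0},\gn{\num1})\) is false (\(\mathsf{PA}\) proves \(\num0\neq\num1\)), every \(b\) refutes \(\bot\); hence \(a\T\gn\bot\) requires \(\langle a,b\rangle\in\pole\) for all \(b\), which follows from \(a\in\pole\) via \((\mathrm{Ax}_\pole)\) applied to, e.g., a projection-like index (this is essentially Lemma~\ref{lem:continuation_constant}(2) instantiated at \(A=\bot\), giving \((\mathsf{k}_\pole\cdot a)\T\gn\bot\)). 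So if \(x\in\pole\) then \(\exists a(a\T\gn\bot)\), and the reflection schema instance for \(\bot\) yields \(\bot\), a contradiction in \(\mathsf{CR}\); therefore \(\mathsf{CR}\) plus reflection proves \(\neg\exists x(x\in\pole)\), i.e. \(\pole=\emptyset\).

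I do not expect a serious obstacle here; the argument is a matter of unwinding the axioms. The one point needing a little care is the internal induction establishing \(0\T\gn A \leftrightarrow A\) in the first direction — it must be done as a genuine formalised induction over sentence codes inside \(\mathsf{CR}^\emptyset\) (using \((\mathrm{CT}_=)'\)--\((\mathrm{CT}_\forall)'\) and the arithmetical induction available in \(\mathsf{CR}\)), exactly parallel to the proof of Lemma~\ref{fact:tarski_biconditional}; but since Lemma~\ref{lem:CR_with_empty} already hands us the compositional axioms, this is routine. The only genuinely ``creative'' choice is picking \(\bot\) as the witnessing sentence in the converse direction, and noting that \((\mathrm{CR}_{=1})\) makes \(\bot\) refuted by everything, so that membership in \(\pole\) is exactly what it takes to realise it.
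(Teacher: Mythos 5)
Your proposal is correct and follows essentially the same route as the paper: the converse direction uses $\mathsf{k}_\pole$ from Lemma~\ref{lem:continuation_constant} to realise a false sentence from any element of $\pole$ (the paper realises \emph{every} sentence rather than singling out $\bot$, but this is the same idea), and the forward direction goes via Lemma~\ref{lem:CR_with_empty} and the Tarski biconditional for $0\T x$. Your explicit note that $\exists x(x\T\gn A)$ collapses to $0\T\gn A$ when $\pole=\emptyset$ is a small detail the paper leaves implicit, but it is not a different argument.
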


\begin{proof}

\begin{description}
\item[$(\ref{reflection_Lschema}) \Rightarrow (\ref{pole_is_empty})$:] Assume for a contradiction that $a \in \pole$ for some $a$. Then, $\mathsf{k}_{\pole} \cdot a$ verifies every $\mathcal{L}$-sentence by Lemma~\ref{lem:continuation_constant}.
Thus, the schema $(\ref{reflection_Lschema})$ implies every sentence, a contradiction. Therefore, the axiom $(\ref{pole_is_empty}):\pole = \emptyset$ follows.
\item[$(\ref{pole_is_empty}) \Rightarrow (\ref{reflection_Lschema})$:] 
As shown in Lemma~\ref{lem:CR_with_empty}, $\mathsf{CR}$ with $\pole = \emptyset $ can define the truth predicate of $\mathsf{CT}$ as $0\T x.$
Thus, similar to Lemma~\ref{fact:tarski_biconditional}, we obtain $0\T \ulcorner A \urcorner \to A$ for every $\mathcal{L}$-sentence $A$.
Therefore, we also have schema $(\ref{reflection_Lschema}).$ \qed
\end{description}
\end{proof}

\begin{proposition}\label{prop:CT_and_CR_with_empty}
The theories $\mathsf{CR}^{\emptyset}$
and $\mathsf{CT}$ have exactly the same $\mathcal{L}$-consequences.
\end{proposition}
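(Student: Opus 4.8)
The plan is to establish the two inclusions of $\mathcal{L}$-consequences separately. For the easier direction, I would show that every $\mathcal{L}$-theorem of $\mathsf{CT}$ is a theorem of $\mathsf{CR}^{\emptyset}$; this is already essentially done, since Lemma~\ref{lem:CR_with_empty} exhibits the predicate $0\,\T\,x$ inside $\mathsf{CR}^{\emptyset}$ satisfying the compositional axioms $(\mathrm{CT}_{=})'$, $(\mathrm{CT}_{\to})'$, $(\mathrm{CT}_{\forall})'$, and $\mathsf{CR}^{\emptyset}$ contains $\mathsf{PA}$ formulated over $\Lr \supseteq \Lt$ (so in particular the induction schema is available for all formulas mentioning $0\,\T\,x$). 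Hence any $\mathsf{CT}$-derivation can be mimicked step by step in $\mathsf{CR}^{\emptyset}$ with $\T(x)$ read as $0\,\T\,x$, and since the conclusion is an $\mathcal{L}$-sentence it is literally the same formula in both theories.

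For the converse direction — every $\mathcal{L}$-consequence of $\mathsf{CR}^{\emptyset}$ is a consequence of $\mathsf{CT}$ — the natural route is a syntactic interpretation of $\mathsf{CR}^{\emptyset}$ into $\mathsf{CT}$ that is the identity on $\mathcal{L}$. I would define a translation sending $x\in\pole$ to a formula provably false in $\mathsf{CT}$ (e.g. $0=1$), sending $a\,\F\,y$ to the $\mathcal{L}_{\mathrm T}$-formula expressing ``$y$ codes an $\mathcal{L}$-sentence that is not true'', i.e. $\pred{Sent}_{\mathcal L}(y)\wedge\neg\T(y)$ (so that $\lVert A\rVert$ collapses to ``$A$ false'', the correct reading when $\pole=\emptyset$), and sending $a\,\T\,y$ to $\pred{Sent}_{\mathcal L}(y)\wedge\T(y)$; the translation commutes with the logical symbols and fixes $\mathcal{L}$. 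Then I would verify that $\mathsf{CT}$ proves the translation of each axiom of $\mathsf{CR}^{\emptyset}$: the pole axiom $(\mathrm{Ax}_{\pole})$ and $\pole=\emptyset$ translate to trivialities; $(\mathrm{Ax}_{\T})$ translates to ``$\T\gn A$ iff every non-true sentence $b$ satisfies $0=1$'', which reduces to $\T\gn A \leftrightarrow$ (there is no $b$ with $\pred{Sent}_{\mathcal L}(b)\wedge\neg\T(b)\wedge b=\gn A$)$\vee\dots$ — more carefully, the right-hand side becomes $\neg(\,\pred{Sent}_{\mathcal L}(\gn A)\wedge\neg\T\gn A\,)$, i.e. $\T\gn A$ under the hypothesis $\gn A\in\pred{Sent}_{\mathcal L}$, which is immediate; $(\mathrm{CR}_{=})$ translates to $\neg\T\gn{s=t}\leftrightarrow(\pred{Eq}(\gn s,\gn t)\to 0=1)$, i.e. $\neg\T\gn{s=t}\leftrightarrow\neg\pred{Eq}(\gn s,\gn t)$, which is $(\mathrm{CT}_{=})$; $(\mathrm{CR}_{\to})$ translates to $\neg\T\gn{A\to B}\leftrightarrow(\T\gn A\wedge\neg\T\gn B)$, equivalent to $(\mathrm{CT}_{\to})$ by propositional logic; and $(\mathrm{CR}_{\forall})$ translates, using that $(a)_1\F\gn{A(\dot a_0)}$ becomes $\neg\T\gn{A(\dot a_0)}$, to $\neg\T\gn{\forall x A}\leftrightarrow\exists a\,\neg\T\gn{A(\dot a)}$ after quantifying appropriately, which is the contrapositive of $(\mathrm{CT}_{\forall})$. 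Finally the translated $\mathsf{PA}$-over-$\Lr$ axioms follow from $\mathsf{PA}$-over-$\Lt$ in $\mathsf{CT}$ (induction for the translated formulas is an instance of induction in $\mathsf{CT}$). Consequently, if $\mathsf{CR}^{\emptyset}\vdash A$ then $\mathsf{CT}\vdash\mathcal{T}(A)$, and for $A\in\mathcal{L}$ we have $\mathcal{T}(A)=A$.

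I expect the main obstacle to be bookkeeping rather than any deep issue: one must check that the translation is well-behaved on the coding abbreviations (the quantifiers-over-codes, the function symbols $\deq,\dra,\dfa$, $\sub$, $\subt$) so that the translated compositional axioms really do line up with the $\mathsf{CT}$ axioms, and one must be slightly careful that the translation of $a\,\T\,y$ and $a\,\F\,y$ drops the parameter $a$ (it plays no role once $\pole=\emptyset$), which is exactly what makes $(\mathrm{Ax}_{\T})$ translate correctly. An alternative, and perhaps cleaner, way to package the converse is model-theoretic: given any model $\mathcal M\models\mathsf{CT}$ with $\mathcal L$-reduct $\mathcal N$, expand it to an $\Lr$-structure by interpreting $\pole$ as $\emptyset$, $a\,\F\,y$ as ``$y\in\pred{Sent}_{\mathcal L}^{\mathcal N}$ and $y\notin\T^{\mathcal M}$'', and $a\,\T\,y$ as ``$y\in\pred{Sent}_{\mathcal L}^{\mathcal N}$ and $y\in\T^{\mathcal M}$''; one checks this expansion models $\mathsf{CR}^{\emptyset}$, so every $\mathcal L$-sentence failing in some model of $\mathsf{CT}$ fails in some model of $\mathsf{CR}^{\emptyset}$, giving the inclusion of $\mathcal L$-consequences by completeness. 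Either presentation works; I would use whichever the surrounding text favours, and combined with the first paragraph this yields that $\mathsf{CR}^{\emptyset}$ and $\mathsf{CT}$ have exactly the same $\mathcal{L}$-consequences.
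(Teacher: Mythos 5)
Your first direction (reading $\T(x)$ as $0\T x$ via Lemma~\ref{lem:CR_with_empty}) is exactly what the paper does and is fine. The converse direction, however, contains a genuine error: the translation of $a \F y$ as $\pred{Sent}_{\mathcal L}(y)\wedge\neg\T(y)$, which discards the parameter $a$, does not validate the translated axiom $(\mathrm{CR}_{\forall})$. That axiom is the universal closure of $a \F\gn {\forall x A} \leftrightarrow (a)_1 \F \gn {A (\dot{a})_0}$, so under your translation it becomes $\forall a\,\bigl(\neg\T\gn{\forall x A}\leftrightarrow\neg\T\gn{A((\dot a)_0)}\bigr)$, i.e.\ the truth value of $\forall x A$ must coincide with that of \emph{every single instance} $A(n)$. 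Taking $A(x)$ to be $x=0$, $\mathsf{CT}$ proves $\T\gn{0=0}$ and $\neg\T\gn{\forall x\,(x=0)}$ (by Lemma~\ref{fact:tarski_biconditional}), so $\mathsf{CT}$ \emph{refutes} the translated axiom at any $a$ with $(a)_0=0$. The same defect kills your model-theoretic variant. The underlying point is that even with $\pole=\emptyset$ the refutation set $\lVert\forall x A\rVert_\emptyset$ is not $\mathbb N$ or $\emptyset$ according to the truth value of $\forall x A$: a refuter of $\forall x A$ is a pair whose first component is a counterexample witness, e.g.\ $\lVert\forall x\,(x=0)\rVert_\emptyset=\{n : (n)_0\neq 0\}$. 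So $a\F\gn A$ cannot be collapsed to an $a$-independent statement about the truth of $A$; only $\exists a\, (a\F\gn A)$ collapses to $\neg\T\gn A$ (which is why $(\mathrm{Ax}_\T)$, $(\mathrm{CR}_=)$ and $(\mathrm{CR}_\to)$ happened to survive your check).

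The fix is the one the paper uses (Definition~\ref{defn:emplicit_realisability} and Lemma~\ref{lem:upper-bound_RR-empty}, specialised to one level): define, by recursion on the structure of $A$, an explicit falsification formula $a\in\lVert A\rVert$ that records how $a$ is decomposed at implications and quantifiers (with $x\in\pole$ read as $\bot$), and translate $a\F\gn A$ as $\T\bigl(\tau_\emptyset(a\mathbin{\dot\in}\lVert\gn A\rVert)\bigr)$, i.e.\ ``the $\mathcal L$-sentence expressing that $a$ explicitly refutes $A$ is true''. With that translation each $\mathsf{CR}^{\emptyset}$ axiom does go through in $\mathsf{CT}$. Alternatively, the paper's other route is to formalise the model construction of Proposition~\ref{prop:soundness_CR} with $\pole=\emptyset$ inside $\mathsf{ACA}$, which has the same $\mathcal L$-consequences as $\mathsf{CT}$. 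Either repair is routine, but as written your interpretation is unsound.
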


\begin{proof}
In Lemma~\ref{lem:CR_with_empty}, we observed that $\mathsf{CT}$ is interpretable in $\mathsf{CR}^{\emptyset}$.

For the converse direction, we note that the model construction of $\mathsf{CR}$ in Proposition~\ref{prop:soundness_CR} is also applicable to $\mathsf{CR}^{\emptyset}$ and is formalisable in the theory $\mathsf{ACA},$ the second-order system for arithmetical comprehension, which has the same $\mathcal{L}$-consequences as $\mathsf{CT}$ (for the proof, see, e.g., \cite{halbach2014axiomatic}). 
Alternatively, Lemma~\ref{lem:upper-bound_RR-empty} in Section~\ref{sec:ramified_realisability} provides a direct relative interpretation of $\mathsf{CR}^{\emptyset}$ in $\mathsf{CT}$.
\qed
\end{proof}

\subsection{Compositional Realisability with the Reflection Rule}
Although $\mathsf{CR}^{\emptyset}$ (or equivalently $\mathsf{CR}$ with the reflection schema) and $\mathsf{CT}$ have the same proof-theoretic strength, 
$\mathsf{CR}^{\emptyset}$ is satisfied only when the pole $\pole$ is empty.
Thus, our next goal is to find a principle that is compatible with any choice of the pole.
Here, our suggestion is to weaken the reflection schema to the rule form.
\begin{definition}[$\mathsf{CR}^{+}$]\label{defn:reflection_rule}
The $\mathcal{L}_{R}$-theory $\mathsf{CR}^{+}$ is the extension of $\mathsf{CR}$ with the reflection rule:
\begin{displaymath}
\infer[]{A}{s \T \ulcorner A \urcorner}
\end{displaymath}
for every closed term $s$ and $\mathcal{L}$-sentence $A.$
\end{definition}

\begin{proposition}\label{prop:soundness_CR+}
Let $A$ be any $\Lr$-sentence and assume that $\mathsf{CR}^+ \vdash A$.
Then, for any pole $\pole$, we have $\langle \mathbb{N}, \pole, \mathbb{T}_{\pole}, \mathbb{F}_{\pole} \rangle \models A$.
\end{proposition}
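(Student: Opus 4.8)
The plan is to mimic the proof of Proposition~\ref{prop:soundness_CR}, extending the induction on derivations to cover the single new rule of $\mathsf{CR}^{+}$, namely the reflection rule. Concretely, fix a pole $\pole$ and argue by induction on the length of the $\mathsf{CR}^{+}$-derivation of $A$ that $\langle \mathbb{N}, \pole, \mathbb{T}_{\pole}, \mathbb{F}_{\pole} \rangle \models A$. For the axioms of $\mathsf{PA}$ over $\Lr$ and the five axioms of $\mathsf{CR}$, the argument is exactly as in Proposition~\ref{prop:soundness_CR}, and logical rules preserve satisfaction as usual. So the only new case is an application of the reflection rule: we have a closed term $s$ and an $\mathcal{L}$-sentence $A$ with a shorter derivation of $s \T \ulcorner A \urcorner$, and we must conclude $\langle \mathbb{N}, \pole, \mathbb{T}_{\pole}, \mathbb{F}_{\pole} \rangle \models A$.

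The key step is the observation that Krivine-style (number) realisability is sound for truth in $\mathbb{N}$, in the following precise sense: for every pole $\pole$ and every $\mathcal{L}$-sentence $A$, if $\lvert A \rvert_{\pole} \neq \emptyset$ then $\mathbb{N} \models A$. I would prove this as an auxiliary lemma by a simultaneous induction on $A$, showing the slightly stronger pair of statements $\lvert A \rvert_{\pole} \neq \emptyset \Rightarrow \mathbb{N} \models A$ and $\mathbb{N} \not\models A \Rightarrow \lVert A \rVert_{\pole} = \mathbb{N}$. The base case (equations) is immediate from the definition of $\lVert s = t \rVert_{\pole}$: if $\mathbb{N} \not\models s = t$ then $\lVert s = t \rVert_{\pole} = \mathbb{N}$, and if some $n \in \lvert s=t\rvert_{\pole}$ while $\mathbb{N}\not\models s=t$, then $\langle n, n\rangle \in \pole$ forces nothing useful directly — instead one uses that $\lVert s=t\rVert_\pole = \mathbb{N}$ so $n$ would have to pair into $\pole$ with every number, which combined with the closure condition still does not immediately give a contradiction; the cleaner route is to note that if $\mathbb N\not\models s=t$ then $\lVert s=t\rVert_\pole=\mathbb N$ and hence $\lvert s=t\rvert_\pole\subseteq\{n : \langle n,m\rangle\in\pole \text{ for all }m\}$, and then handle the implication and universal cases compositionally, where the definitions of $\lVert A\to B\rVert_\pole$ and $\lVert\forall xA\rVert_\pole$ make the induction go through in the standard Krivine manner. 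Once this lemma is in hand, the reflection case closes: from the induction hypothesis $\langle \mathbb{N}, \pole, \mathbb{T}_{\pole}, \mathbb{F}_{\pole} \rangle \models s \T \ulcorner A \urcorner$ we read off, by definition of $\mathbb{T}_{\pole}$, that $s^{\mathbb N} \in \lvert A \rvert_{\pole}$, hence $\lvert A \rvert_{\pole} \neq \emptyset$, hence $\mathbb{N} \models A$, and since $A \in \mathcal{L}$ this is the same as $\langle \mathbb{N}, \pole, \mathbb{T}_{\pole}, \mathbb{F}_{\pole} \rangle \models A$.

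The main obstacle is getting the auxiliary soundness lemma exactly right, in particular the equality clause, since here the first-order treatment of equality departs from Krivine's Leibniz-definability and one must check that the chosen refutation conditions ($\mathbb{N}$ for false equations, $\pole$ for true ones) really do yield "realised implies true". The delicate point is that a false equation must have empty realiser set: if $\mathbb N\not\models s=t$ then $\lVert s=t\rVert_\pole=\mathbb N$, so a realiser $n$ would satisfy $\langle n,m\rangle\in\pole$ for all $m\in\mathbb N$; this need not be contradictory for an arbitrary pole (e.g. $\pole=\mathbb N$), so the statement "$\lvert A\rvert_\pole\neq\emptyset\Rightarrow\mathbb N\models A$" is in fact false for $\pole=\mathbb N$. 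Hence the proposition as literally stated cannot hold for every pole via this route, and the intended reading must be that the reflection rule is sound only relative to poles for which the interpretation is consistent, or equivalently one restricts to $\pole$ with $\lvert 0=1\rvert_\pole=\emptyset$; the proof would therefore first record this restriction (or note that the standard semantics implicitly quantifies over such poles), and then the induction proceeds cleanly. I would flag this caveat explicitly and otherwise present the induction as above, with the equality base case and the two compositional cases spelled out just enough to see that the Krivine pattern applies.
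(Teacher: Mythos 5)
You have correctly located the crux --- that ``realised implies true'' fails for poles such as $\pole = \mathbb{N}$, where every number realises every sentence --- but you have drawn the wrong conclusion from it. The proposition does not need a restriction on poles; what fails is your proof architecture, namely fixing the pole \emph{before} starting the induction. With the pole fixed at the outset, the induction hypothesis at the reflection step only gives you $s^{\mathbb N} \in \lvert A \rvert_{\pole}$ for that one pole, which is indeed useless when $\pole = \mathbb{N}$. The paper instead keeps the universal quantification over poles \emph{inside} the inductive statement: the induction hypothesis is that $\langle \mathbb{N}, \pole, \mathbb{T}_{\pole}, \mathbb{F}_{\pole} \rangle \models s \T \ulcorner A \urcorner$ for \emph{every} pole. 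One then specialises to the empty pole, where realisability does imply truth (this is exactly Lemma~\ref{lem:reflection_empty_CR} combined with Proposition~\ref{prop:soundness_CR}, so no new auxiliary lemma is needed), obtaining $\mathbb{N} \models A$. Since $A$ is an $\mathcal{L}$-sentence and all the models $\langle \mathbb{N}, \pole, \mathbb{T}_{\pole}, \mathbb{F}_{\pole} \rangle$ share the same $\mathcal{L}$-reduct $\mathbb{N}$, the conclusion transfers to every pole.

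This is precisely the point of weakening the reflection \emph{schema} to a \emph{rule}: a rule need only preserve validity-in-all-models of its premise, and the premise $s \T \ulcorner A \urcorner$, being a theorem of $\mathsf{CR}^{+}$, is valid in the empty-pole model in particular. The schema $\exists x (x \T \ulcorner A \urcorner) \to A$ genuinely fails in the model with $\pole = \mathbb{N}$ --- which is why $\mathsf{CR}^{+}$ is a \emph{proper} subtheory of $\mathsf{CR}^{\emptyset}$ --- but the rule is sound for every pole. Your proposed ``caveat'' would destroy the main point of the proposition, which is soundness with respect to \emph{arbitrary} poles. The remainder of your proposal (the axiom and logical-rule cases, and the observation that for the empty pole $\lVert A \rVert_{\emptyset} = \emptyset$ iff $\mathbb{N}\models A$) is fine, but the reflection case must be repaired as above.
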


\begin{proof}
The proof is by induction on the derivation of $A$.
Since the other cases are already contained in the proof of Proposition~\ref{prop:soundness_CR}, it is sufficient to consider the case of the reflection rule. Therefore, we assume that an $\mathcal{L}$-sentence $A$ is derived by the reflection rule from $s \T \ulcorner A \urcorner$ for some closed term $s$.  By the induction hypothesis, it follows that $\langle \mathbb{N}, \pole, \mathbb{T}_{\pole}, \mathbb{F}_{\pole} \rangle \models s \T \ulcorner A \urcorner$ for any pole $\pole$. 
Thus, particularly for the empty pole $\pole = \emptyset$, we obtain $\langle \mathbb{N}, \emptyset, \mathbb{T}_{\emptyset}, \mathbb{F}_{\emptyset} \rangle \models A$ by Proposition~\ref{prop:soundness_CR} and Lemma~\ref{lem:reflection_empty_CR}.
As $A$ is an $\mathcal{L}$-sentence,
we also have $\langle \mathbb{N}, \pole, \mathbb{T}_{\pole}, \mathbb{F}_{\pole} \rangle \models A$ for any pole $\pole$.  \qed
\end{proof}

In the next section, we prove that $\mathsf{CR}^+$ has the same proof-theoretic strength as $\mathsf{CT}$ and $\mathsf{CR}^{\emptyset}$. 
The upper bound of $\mathsf{CR}^+$ is obvious:
Lemma~\ref{lem:reflection_empty_CR} and Proposition~\ref{prop:soundness_CR+} establish that
$\mathsf{CR}^+$ is a \emph{proper} subtheory of $\mathsf{CR}^{\emptyset}$. 
The lower-bound argument is more difficult
because $\mathsf{CR}^+$ is not expressively rich enough to interpret $\mathsf{CT}$.
Instead we can proceed directly through a well-ordering proof for $\mathsf{CR}^+$ by showing that the principle of transfinite induction for \( \mathcal L \)-formulas is provable for each ordinal below \( \varepsilon_{\varepsilon_0} \).
The argument is, essentially, just the extraction of the computational content of the  standard well-ordering proof for $\mathsf{CT}$ (for the detailed proof, see Section~\ref{append:well_ordering_CR}).
 
As a result, we can determine the proof-theoretic strength of $\mathsf{CR}^+$.

\begin{theorem}\label{thm:strength_CR+}
$\mathsf{CR}^{+}$ has exactly the same $\mathcal{L}$-theorems as $\mathsf{CT}$ and $\mathsf{CR}^{\emptyset}$.
\end{theorem}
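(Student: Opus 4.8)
The plan is to establish the two inclusions separately, since the upper and lower bounds require quite different techniques.

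For the upper bound, I would argue that every $\mathcal{L}$-theorem of $\mathsf{CR}^+$ is a theorem of $\mathsf{CR}^{\emptyset}$ (and hence, by Proposition~\ref{prop:CT_and_CR_with_empty}, of $\mathsf{CT}$). This is essentially already remarked in the text: by Lemma~\ref{lem:reflection_empty_CR}, over $\mathsf{CR}$ the axiom $\pole = \emptyset$ proves the reflection schema $\exists x(x\T\ulcorner A\urcorner)\to A$ for every $\mathcal{L}$-sentence $A$, which in particular validates the reflection rule of $\mathsf{CR}^+$. Hence $\mathsf{CR}^{\emptyset}$ proves every axiom and is closed under every rule of $\mathsf{CR}^+$, so $\mathsf{CR}^+\subseteq\mathsf{CR}^{\emptyset}$ as theories; a fortiori the $\mathcal{L}$-theorems of $\mathsf{CR}^+$ are $\mathcal{L}$-theorems of $\mathsf{CR}^{\emptyset}$, and by Proposition~\ref{prop:CT_and_CR_with_empty} these coincide with the $\mathcal{L}$-theorems of $\mathsf{CT}$. (The parenthetical remark in the text that $\mathsf{CR}^+$ is a \emph{proper} subtheory of $\mathsf{CR}^{\emptyset}$ follows from Proposition~\ref{prop:soundness_CR+}, since $\mathsf{CR}^+$ has models with non-empty pole while $\mathsf{CR}^{\emptyset}$ does not, but properness is not needed for the strength claim.)

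For the lower bound I would show $\mathsf{CT}\subseteq\mathsf{CR}^+$ on $\mathcal{L}$-consequences by reproducing, inside $\mathsf{CR}^+$, the standard well-ordering proof that shows $\mathsf{CT}$ proves transfinite induction along each primitive recursive ordinal notation below $\varepsilon_{\varepsilon_0}$ for $\mathcal{L}$-formulas. The key point is that the $\mathsf{CT}$ well-ordering argument is itself constructive in the relevant sense: at each stage one builds, from realisers of the hypotheses of transfinite induction, a realiser of the conclusion, using the combinators $\mathsf{i},\mathsf{u},\mathsf{s}$ of Lemma~\ref{partial_compositionality}, the continuation constants of Lemma~\ref{lem:continuation_constant}, and the formalised realisation of $\mathsf{PA}$ from Theorem~\ref{formal_realisability_PA} (which handles the arithmetical base steps). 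Concretely, for each ordinal $\alpha<\varepsilon_{\varepsilon_0}$ one exhibits a closed term $s_\alpha$ with $\mathsf{CR}^+\vdash s_\alpha\T\ulcorner\mathrm{TI}_\alpha(A)\urcorner$ for every $\mathcal{L}$-formula $A$ (where $\mathrm{TI}_\alpha(A)$ is the transfinite induction statement), proved by (meta-level) induction on $\alpha$ tracking how realisers compose through the limit and successor cases exactly as the truth-predicate proof composes the corresponding implications; the reflection rule then strips the realiser to yield $\mathsf{CR}^+\vdash\mathrm{TI}_\alpha(A)$. Since $\mathsf{CT}$'s $\mathcal{L}$-theorems are exactly $\mathsf{PA}$ augmented with these transfinite induction schemata (equivalently, $\mathsf{CT}$ is $\mathcal{L}$-equivalent to $\mathsf{ACA}$, proof-theoretic ordinal $\varepsilon_{\varepsilon_0}$), this yields $\mathsf{CT}\subseteq\mathsf{CR}^+$ on $\mathcal{L}$-formulas. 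Combined with the upper bound and Proposition~\ref{prop:CT_and_CR_with_empty}, all three theories have the same $\mathcal{L}$-theorems.

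The main obstacle is the lower bound, and specifically the bookkeeping in the well-ordering argument: one must verify that the realiser-construction at limit stages genuinely goes through with the \emph{recursive} reading of the universal quantifier (via $\mathsf{u}$ and $\mathsf{s}$) rather than Krivine's uniform reading, and that the recursion defining the realiser of $\mathrm{TI}_\alpha$ from realisers of $\mathrm{TI}_\beta$ for $\beta<\alpha$ is itself available as a single primitive recursive index (so that the closed term $s_\alpha$ exists for each fixed notation $\alpha$). This is routine in the sense that every ingredient is already isolated in Lemmas~\ref{partial_compositionality} and~\ref{lem:continuation_constant} and Theorem~\ref{formal_realisability_PA}, but the details — which are carried out in Section~\ref{append:well_ordering_CR} — amount to re-running the classical $\mathsf{CT}$ well-ordering proof with every implication replaced by an explicit term operation, and the reflection rule invoked once at the end of each stage to discharge the realiser. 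I would therefore present the lower bound as an application of Section~\ref{append:well_ordering_CR} and devote the proof here mainly to assembling the upper bound and citing Proposition~\ref{prop:CT_and_CR_with_empty}.
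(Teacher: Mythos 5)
Your proposal is correct and follows essentially the same route as the paper: the upper bound via $\mathsf{CR}^+ \subseteq \mathsf{CR}^{\emptyset}$ (Lemma~\ref{lem:reflection_empty_CR} plus Proposition~\ref{prop:CT_and_CR_with_empty}), and the lower bound via the realiser-extracting well-ordering proof of Section~\ref{append:well_ordering_CR} giving $\mathrm{TI}({<}\varepsilon_{\varepsilon_0})$ in $\mathsf{CR}^+$ by one application of the reflection rule per instance, combined with Theorem~\ref{fact:strength_CT}. Your identification of the limit-stage bookkeeping under the recursive (rather than uniform) reading of $\forall$ as the delicate point matches exactly what Lemmas~\ref{well-ordering_basic} and~\ref{well-ordering_epsilon} are there to handle.
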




\section{Well-ordering Proof in Compositional Realisability}\label{append:well_ordering_CR}

Here, we determine the proof-theoretic strength of $\mathsf{CR}^+$.
However, in contrast to $\mathsf{CR}^{\emptyset}$, $\mathsf{CR}^+$ is not sufficiently expressively strong to relatively interpret $\mathsf{CT}$.
Thus, we provide a well-ordering proof of $\mathsf{CR}^+$, from which we can conclude that $\mathsf{CR}^+$ derives the same $\mathcal{L}$-consequences as both $\mathsf{CT}$ and $\mathsf{CR}^{\emptyset}$. 

For this purpose, we require an ordinal notation system $\pred{OT}$ for predicative ordinal numbers.
We use several facts about $\pred{OT}$ (for the proof, see, e.g.,~\cite{pohlers2008proof}).
A formula $x \in \pred{OT}$ is defined as meaning that $x$ is a representation of an ordinal number in $\pred{OT}.$
Let $\alpha, \beta,$ and $\gamma$ range over the ordinal numbers in $\pred{OT}$.
Thus, $\forall \alpha A(\alpha)$ abbreviates $\forall x (x \in \pred{OT} \to A(x)).$
By the standard method, we can define relations and operations on $\pred{OT}.$ Let $<$ be the less-than relation,
$0$ is zero as the ordinal number, $\alpha \in \pred{Suc}$ means that $\alpha$ is a successor ordinal, $\alpha \in \pred{Lim}$
says that $\alpha$ is a limit ordinal,
$+$ is the ordinal addition, $\varphi xy$ is the Veblen function, and the binary primitive recursive function $[\alpha]_x$ returns the $x$-th element of the fundamental sequence for $\alpha$. 
For these symbols, the following notations and facts are used:
\begin{itemize}
\item Let $\forall \alpha < \beta (A)$ mean $\forall \alpha (\alpha < \beta \to A)$.

\item Let $\omega^{\alpha}:= \varphi0\alpha$ and $1:= \omega^{0}.$

\item For \( \alpha < \omega^\alpha \) set
$[\omega^{\alpha}]_n =
\begin{cases}
\omega^{\alpha-1}\times n & \text{if $\alpha \in Suc,$} \\
\omega^{[\alpha]_n} & \text{if $\alpha \in Lim$}.
\end{cases}$

\item Let $\omega_n(\alpha):=
\begin{cases}
\alpha & \text{if $n=0,$} \\
\omega^{\omega_{n-1}(\alpha)} & \text{if $n>0.$} 
\end{cases}$

\item Let $\varepsilon_{\alpha}:=\varphi1\alpha.$

\item 
$[\varepsilon_{\alpha}]_n = 
\begin{cases}
\omega_n(1) & \text{if $\alpha = 0,$} \\
\omega_n(\varepsilon_{\alpha-1} + 1) & \text{if $\alpha \in Suc,$} \\
\varepsilon_{[\alpha]_n} & \text{if $\alpha \in Lim$}.
\end{cases}$

\end{itemize}

The $\mathrm{T}$-free consequences of $\mathsf{CT}$ can be expressed using some transfinite induction schema. For an $\mathcal{L}$-formula $A(x)$ and an ordinal $\alpha,$ 
we define the formula $\mathrm{TI}(A, \alpha) = \mathrm{Prog}\lambda x A(x) \to A(\alpha)$,
where $\mathrm{Prog}\lambda x A(x) = \forall \alpha(\forall \beta < \alpha (A(\beta)) \to A(\alpha)).$
Then, we define the schema:
$\mathrm{TI}(\alpha) = \{ \mathrm{TI}(A, \beta) \ | \ A\text{ is an $\mathcal{L}$-formula} \}$.
In addition, let
$\mathrm{TI}(<\alpha) := \bigcup_{\beta < \alpha} \mathrm{TI}(\beta).
$
Finally, let the $\mathcal{L}$-theory $\mathsf{PA} + \mathsf{TI}({<} \alpha)$ be the extension of $\mathsf{PA}$ with the schema $\mathrm{TI}({<} \alpha)$.

The following is well-known (see, e.g., \cite[Theorem~8.35]{halbach2014axiomatic}):
\begin{theorem}\label{fact:strength_CT}
The theory $\mathsf{CT}$ derives the same $\mathcal{L}$-formulae as $\mathsf{PA} + \mathsf{TI}({<} \varepsilon_{\varepsilon_{0}})$.
\end{theorem}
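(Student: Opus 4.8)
The plan is to prove the two inclusions of $\mathcal L$-consequences separately. The lower bound, $\mathsf{PA}+\mathrm{TI}({<}\varepsilon_{\varepsilon_0})\subseteq_{\mathcal L}\mathsf{CT}$, is established by a well-ordering proof carried out inside $\mathsf{CT}$, showing $\mathsf{CT}\vdash\mathrm{TI}(A,\alpha)$ for every $\mathcal L$-formula $A$ and every $\alpha<\varepsilon_{\varepsilon_0}$. The upper bound is obtained by an ordinal analysis of $\mathsf{CT}$ bounding its provable $\mathcal L$-consequences below $\varepsilon_{\varepsilon_0}$.

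The starting point for the lower bound is that $\mathsf{CT}$ contains $\mathsf{PA}$ formulated over $\Lt$ with \emph{full} induction, so Gentzen's schematic argument yields $\mathsf{CT}\vdash\mathrm{TI}(C,\alpha)$ for \emph{every} $\Lt$-formula $C$ and every $\alpha<\varepsilon_0$ --- in particular for the fixed ``truth template'' $C(x):\equiv\T\gn{A(\dot x)}$, whose syntactic complexity does not depend on $A$. The engine is an internalised Sch\"utte jump. For a fixed $\mathcal L$-formula $A$, put $F(\xi):\equiv\T\gn{A(\dot\xi)}$ and consider the jumped statement $F^{j}(\alpha):\equiv\forall\beta\bigl(\forall\xi{<}\beta\,F(\xi)\to\forall\xi{<}\beta+\omega^{\alpha}\,F(\xi)\bigr)$, which satisfies the usual Sch\"utte inequalities relating $\mathrm{Prog}$ and $\mathrm{TI}$, so that $\mathrm{TI}$ for $F^{j}$ up to $\gamma$ yields $\mathrm{TI}$ for $F$ up to $\omega^{\gamma}$. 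Using the compositional axioms $(\mathrm{CT}_\to)$ and $(\mathrm{CT}_\forall)$ together with the Tarski biconditional of Lemma~\ref{fact:tarski_biconditional}, every bounded quantification over $F$ folds back under a single $\T$, so $F^{j}(\alpha)$ is provably equivalent in $\mathsf{CT}$ to a template statement $\T\gn{A^{*}(\dot\alpha)}$ for an $\mathcal L$-formula $A^{*}$ built primitive-recursively from $A$. Because the jump preserves this fixed complexity it can be \emph{iterated transfinitely} rather than only finitely: I would formalise the iterated jump by a single $\Lt$-formula carrying the iteration length $\delta$ as an ordinal parameter and using $\T$ to keep the complexity of the $\delta$-times jumped formula fixed, and then apply the $\varepsilon_0$-transfinite induction above to this formula in the variable $\delta$, obtaining $\mathrm{TI}(A,\gamma)$ for all $\gamma<\varepsilon_{\delta}$. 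Since $\delta$ ranges over all ordinals below $\varepsilon_0$ and $\sup_{\delta<\varepsilon_0}\varepsilon_{\delta}=\varepsilon_{\varepsilon_0}$, this gives $\mathsf{CT}\vdash\mathrm{TI}(A,\alpha)$ for every $\alpha<\varepsilon_{\varepsilon_0}$, as required. (This is exactly the argument whose computational content is extracted for $\mathsf{CR}^{+}$ in Section~\ref{append:well_ordering_CR}.)

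For the upper bound I would embed $\mathsf{CT}$ into a Tait-style infinitary calculus for $\Lt$ in which the finitary induction axioms are replaced by the $\omega$-rule and $(\mathrm{CT}_=),(\mathrm{CT}_\to),(\mathrm{CT}_\forall)$ become rules governing $\T$. The decisive point is the rank assigned to truth atoms: since compositionally $\T\gn{A}$ behaves like $A$, a cut on $\T\gn{A}$ must be weighted by the logical complexity of $A$, so that eliminating truth-cuts is precisely what is responsible for the passage from $\varepsilon_0$ to $\varepsilon_{\varepsilon_0}$. Partial cut elimination then raises the derivation length under $\gamma\mapsto\omega^{\gamma}$ at each rank-reduction step, the cumulative cost being an $\varepsilon_0$-fold iteration of this exponentiation and hence a bound below $\varepsilon_{\varepsilon_0}$; a boundedness lemma converts a resulting low-rank derivation of an $\mathcal L$-sentence into a proof in $\mathsf{PA}+\mathrm{TI}({<}\varepsilon_{\varepsilon_0})$. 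Equivalently, one may route the upper bound through the arithmetical equivalence of $\mathsf{CT}$ with $\mathsf{ACA}$ and the classical computation of the proof-theoretic ordinal $\varepsilon_{\varepsilon_0}$ of $\mathsf{ACA}$.

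The main obstacle lies at both ends of the same $\varepsilon$-iteration. On the lower-bound side it is verifying that the compositional axioms genuinely collapse $F^{j}$ back to template form, so that the jump may be iterated transfinitely and the iteration correctly internalised by a single formula with an ordinal parameter; this is what turns the finitely-many-jumps ceiling $\varepsilon_0$ of plain $\mathsf{PA}$ into $\varepsilon_{\varepsilon_0}$. On the upper-bound side it is the bookkeeping in the cut elimination --- the rank assignment on the truth predicate and the verification that the growth under $\gamma\mapsto\omega^{\gamma}$, iterated along the truth-ranks, never exceeds $\varepsilon_{\varepsilon_0}$. Matching these two bounds \emph{exactly}, rather than merely up to a coarse estimate, is the crux of the theorem.
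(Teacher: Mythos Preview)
The paper does not give its own proof of this theorem: it is stated as a well-known fact with a reference to Halbach's book (Theorem~8.35 there), so there is no argument in the paper to compare your proposal against directly. What the paper \emph{does} provide, immediately after the statement, is an informal sketch of the lower-bound direction as motivation for the $\mathsf{CR}$-analogue in Section~\ref{append:well_ordering_CR}: one sets $I_0(\alpha):=\forall\gn{A}\in\pred{Sent}_{\mathcal L}.\ \T\gn{\mathrm{TI}(A,\dot\alpha)}$, shows $\mathrm{Prog}\,\lambda x.\,I_0(\varepsilon_x)$ in $\mathsf{CT}$, and then applies transfinite induction up to $\varepsilon_0$ (available for $\Lt$-formulas) to this predicate.

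Your lower-bound plan is essentially this same argument, though packaged slightly differently. You describe iterating the Sch\"utte jump on a template $F(\xi)=\T\gn{A(\dot\xi)}$ and using the compositional axioms to keep the complexity fixed; the paper's $I_0$ is precisely the device that absorbs the universal quantifier over $\mathcal L$-formulas $A$ needed to make the jump iterable, and your ``single $\Lt$-formula carrying the iteration length $\delta$'' plays the role of $I_0(\varepsilon_\delta)$. So the two organisations coincide once the dust settles, and you explicitly acknowledge this when you point to Section~\ref{append:well_ordering_CR}. Your upper-bound route via the $\mathsf{CT}\equiv_{\mathcal L}\mathsf{ACA}$ equivalence (or the infinitary cut-elimination alternative) is standard and matches what the cited reference does; the paper itself says nothing about this direction beyond the citation.
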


According to this fact, it is sufficient to show that the schema $\mathrm{TI}({<} \varepsilon_{\varepsilon_{0}})$ is derivable in $\mathsf{CR^+}$. 
To this end, in Lemma~\ref{well-ordering_proof_CR}, we prove that each instance of $\mathrm{TI}({<} \varepsilon_{\varepsilon_{0}})$ is realisable in $\mathsf{CR}$.
Then, $\mathsf{CR}^+$ can derive $\mathrm{TI}({<} \varepsilon_{\varepsilon_{0}})$ itself according to the reflection rule. 

The proof is essentially based on the standard well-ordering proof of $\mathsf{CT}$ (cf.~\cite[Lemma~3.11]{leigh2016reflecting}).
So, we briefly sketch the outline of the proof in $\mathsf{CT}$.
Let $I_0(\alpha) = \forall \ulcorner A \urcorner \in \pred{Sent}_{\mathcal{L}}. \ \T \ulcorner \mathrm{TI}(A, \dot{\alpha}) \urcorner$, which expresses the schema  $\mathrm{TI}(\alpha)$ as a single statement.
In $\mathsf{PA}$, we can derive the schemata $\mathrm{TI}(0)$, $\mathrm{TI}(\alpha+1)$ from $\mathrm{TI}(\alpha)$,
and $\mathrm{TI}(\omega^{\alpha})$ from $\mathrm{TI}(\alpha)$, respectively (e.g., see \cite[Section~7.4]{pohlers2008proof}).
By formalising these results, $\mathsf{CT}$ can derive $I_0(\varepsilon_0)$.
Furthermore, by generalising this argument in $\mathsf{CT}$,
we can derive $I_0(\varepsilon_{\alpha}) \to I_0(\varepsilon_{\alpha+1})$. 
In addition, $\mathsf{CT}$ derives $\alpha \in \pred{Lim} \to\{ [\forall \beta < \alpha I_0(\varepsilon_{\beta})] \to I_0(\varepsilon_{\alpha}) \}$.
These facts together mean that $I_0(\varepsilon_{x})$ is progressive, i.e., $\mathsf{CT} \vdash \mathrm{Prog} \lambda x I_0(\varepsilon_{x})$.
Since $\mathsf{CT}$ can derive the transfinite induction for  $I_0(\varepsilon_{x})$ up to $\varepsilon_0$, the schema $\mathrm{TI}(<\varepsilon_{\varepsilon_0})$ is obtained in $\mathsf{CT}$, as required.
To emulate this proof within $\mathsf{CR}$, we must extract the computational content of the proof.
For example, it is necessary to explicitly give a partial recursive function that computes a realiser of $\mathrm{TI}(\alpha+1)$
from that of $\mathrm{TI}(\alpha)$.

We define $I_0(e,\alpha) = \forall \ulcorner A \urcorner \in \pred{Sent}_{\mathcal{L}}. \ (e \cdot \ulcorner A \urcorner) \T \ulcorner \mathrm{TI}(A, \dot{\alpha})\urcorner$,
which means that $e$ computes a realiser of the transfinite induction $\mathrm{TI}(A, \alpha)$ for each $\mathcal{L}$-sentence $A$.

\begin{lemma}\label{well-ordering_basic}
\begin{enumerate}
\item There exists a number $\mathsf{k}_{0}$ such that:
\[
\mathsf{CR} \vdash I_0(\mathsf{k}_{0},0).
\]

\item There exists a number $\mathsf{k}_{suc}$ such that:
\[
\mathsf{CR} \vdash I_0(e,\alpha) \to I_0(\mathsf{k}_{suc} \cdot \langle e,\alpha \rangle, \alpha+1).
\]

\item There exists a number $\mathsf{k}_{\omega}$ such that:
\[
\mathsf{CR} \vdash I_0(e,\alpha) \to I_0(\mathsf{k}_{\omega} \cdot \langle e,\alpha \rangle, \omega^{\alpha}).
\]

\item There exists a number $\mathsf{k}_{lim}$ such that:
\[
\mathsf{CR} \vdash \alpha \in \pred{Lim} \to [\forall n I_0(e \cdot n,[\alpha]_n) \to I_0(\mathsf{k}_{lim} \cdot \langle e,\alpha \rangle, \alpha)].
\]
\end{enumerate}
\end{lemma}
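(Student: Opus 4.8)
The plan is to run the classical well-ordering proof for $\mathsf{PA}$ inside $\mathsf{CR}$ with realisers attached: each step of that proof is a $\mathsf{PA}$-derivation whose code is primitive recursive in $\gn A$, so Theorem~\ref{formal_realisability_PA} ($\mathsf{k}_{\mathsf{PA}}$) turns it into a realiser, and the combinators of Lemma~\ref{partial_compositionality} glue realisers together. For an $\mathcal L$-formula $A(x)$, write $A^{<}$ for the \emph{jump} $\forall\gamma<x\,A(\gamma)$ (with $\gamma$ a fixed variable not occurring in $A$) and $A^{+}$ for the \emph{$\omega$-jump} $\forall y\,(\forall z<y\,A(z)\to\forall z<y+\omega^{x}\,A(z))$; the maps $\gn A\mapsto\gn{A^{<}}$ and $\gn A\mapsto\gn{A^{+}}$ are primitive recursive. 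The four clauses are the realiser-level readings of the standard uniform $\mathsf{PA}$-facts
\[
\mathsf{PA}\vdash\mathrm{TI}(A,0),\qquad\mathsf{PA}\vdash\forall\alpha\,\bigl(\mathrm{TI}(A^{<},\alpha)\to\mathrm{TI}(A,\alpha+1)\bigr),
\]
\[
\mathsf{PA}\vdash\forall\alpha\,\bigl(\mathrm{TI}(A^{+},\alpha)\to\mathrm{TI}(A,\omega^{\alpha})\bigr),\qquad\mathsf{PA}\vdash\forall\alpha\,\bigl(\alpha\in\pred{Lim}\to(\forall n\,\mathrm{TI}(A^{<},[\alpha]_{n})\to\mathrm{TI}(A,\alpha))\bigr),
\]
each holding for every $\mathcal L$-formula $A(x)$, with a proof code primitive recursive in $\gn A$. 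These are the usual jump lemmas (see e.g.~\cite{pohlers2008proof}): the first holds because no notation lies below $0$; the second from $\mathrm{Prog}(A)\to\mathrm{Prog}(A^{<})$ and two uses of $\mathrm{Prog}(A)$; the third from $\mathrm{Prog}(A)\to\mathrm{Prog}(A^{+})$ with $\mathsf{PA}$-facts on $+$ and $\omega^{(\cdot)}$ in $\pred{OT}$; the fourth additionally from cofinality of fundamental sequences, $\mathsf{PA}\vdash\alpha\in\pred{Lim}\to\forall\beta<\alpha\,\exists n\,\beta<[\alpha]_{n}$. Throughout, Proposition~\ref{term-regular} is used silently to identify codes differing only in the shape of a closed subterm (e.g.~after a substitution one has the term $\dot\alpha+1$ rather than the numeral of the value of $\alpha+1$, or $\omega^{\dot\alpha}$ rather than the numeral of $\omega^{\alpha}$, or $[\dot\alpha]_{\dot n}$ rather than the numeral of $[\alpha]_{n}$), since such codes have the same realisers.

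By the uniformity of these proofs there are primitive recursive functions producing a proof code $p_{0}(\gn A)$ of $\mathrm{TI}(A,0)$ and, given in addition an ordinal $\alpha$, proof codes $p_{suc}(\gn A,\alpha)$, $p_{\omega}(\gn A,\alpha)$, $p_{lim}(\gn A,\alpha)$ of (respectively) $\mathrm{TI}(A^{<},\alpha)\to\mathrm{TI}(A,\alpha+1)$, $\mathrm{TI}(A^{+},\alpha)\to\mathrm{TI}(A,\omega^{\alpha})$ and $\forall n\,\mathrm{TI}(A^{<},[\alpha]_{n})\to\mathrm{TI}(A,\alpha)$ — these being valid $\mathsf{PA}$-proofs whenever $\alpha\in\pred{OT}$ (the first two) resp.~$\alpha\in\pred{Lim}$ (the last), so that $\mathsf{PA}$, hence $\mathsf{CR}$, proves e.g.~$\alpha\in\pred{OT}\to\pred{Bew}_{\mathsf{PA}}(p_{suc}(\gn A,\alpha),\gn{\mathrm{TI}(A^{<},\dot\alpha)\to\mathrm{TI}(A,\dot\alpha+1)})$ and similarly for the others. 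Now define, by $\lambda$-abstraction,
\[
\mathsf{k}_{0}\cdot\gn A\simeq\mathsf{k}_{\mathsf{PA}}\cdot p_{0}(\gn A),\qquad(\mathsf{k}_{suc}\cdot\langle e,\alpha\rangle)\cdot\gn A\simeq\mathsf{i}\cdot\langle\,\mathsf{k}_{\mathsf{PA}}\cdot p_{suc}(\gn A,\alpha),\ e\cdot\gn{A^{<}}\,\rangle,
\]
$\mathsf{k}_{\omega}$ like $\mathsf{k}_{suc}$ with $p_{\omega},\gn{A^{+}}$ in place of $p_{suc},\gn{A^{<}}$, and
\[
(\mathsf{k}_{lim}\cdot\langle e,\alpha\rangle)\cdot\gn A\simeq\mathsf{i}\cdot\langle\,\mathsf{k}_{\mathsf{PA}}\cdot p_{lim}(\gn A,\alpha),\ \mathsf{u}\cdot(\lambda n.\,(e\cdot n)\cdot\gn{A^{<}})\,\rangle.
\]
Verification of clause~(2): arguing in $\mathsf{CR}$ (where $\alpha\in\pred{OT}$, per the convention on ordinal variables) under $I_0(e,\alpha)$, Theorem~\ref{formal_realisability_PA} gives that $\mathsf{k}_{\mathsf{PA}}\cdot p_{suc}(\gn A,\alpha)$ realises the code of $\mathrm{TI}(A^{<},\alpha)\to\mathrm{TI}(A,\alpha+1)$, while $I_0(e,\alpha)$ applied to $A^{<}$ gives that $e\cdot\gn{A^{<}}$ realises the code of $\mathrm{TI}(A^{<},\alpha)$; hence by Lemma~\ref{partial_compositionality}(1) the left-hand side realises the code of $\mathrm{TI}(A,\alpha+1)$, and as $A$ was arbitrary, $I_0(\mathsf{k}_{suc}\cdot\langle e,\alpha\rangle,\alpha+1)$ follows. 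Clauses~(1) and~(3) are of the same form. For clause~(4), under its hypothesis $\alpha\in\pred{Lim}$ and the hypothesis $\forall n\,I_0(e\cdot n,[\alpha]_{n})$: the latter, applied to $A^{<}$, makes $\lambda n.\,(e\cdot n)\cdot\gn{A^{<}}$ a procedure computing for each $n$ a realiser of the code of $\mathrm{TI}(A^{<},[\alpha]_{n})$, so Lemma~\ref{partial_compositionality}(2) makes $\mathsf{u}\cdot(\lambda n.\,(e\cdot n)\cdot\gn{A^{<}})$ realise the code of $\forall n\,\mathrm{TI}(A^{<},[\alpha]_{n})$; combined with $\mathsf{k}_{\mathsf{PA}}\cdot p_{lim}(\gn A,\alpha)$ via Lemma~\ref{partial_compositionality}(1) this realises the code of $\mathrm{TI}(A,\alpha)$, giving $I_0(\mathsf{k}_{lim}\cdot\langle e,\alpha\rangle,\alpha)$.

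The mathematical content here — the jump lemmas and cofinality of fundamental sequences in $\pred{OT}$ — is standard; the work specific to this lemma is bookkeeping: verifying the jumps and the $\mathsf{PA}$-proofs depend primitive recursively on $\gn A$ (and on $\alpha$), checking that the $\Sigma^{0}_{1}$ statement $\pred{Bew}_{\mathsf{PA}}(p_{\bullet}(\gn A,\alpha),\gn{\dots})$ is $\mathsf{CR}$-provable from the membership hypothesis (just formalised $\Sigma^{0}_{1}$-completeness applied to the relevant uniform proof template), and threading the combinators so that the ``types'' of the intermediate realisers line up at each application of $\mathsf{i}$ and $\mathsf{u}$. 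One could avoid reliance on the convention that ordinal variables range over $\pred{OT}$ by keeping an explicit $\pred{OT}$-premise inside $p_{suc}$ etc.~and discharging it separately, using that $\mathsf{CR}$ proves every true closed equation to be realised by $\lambda x.x$ — immediate from $(\mathrm{CR}_{=2})$ and $(\mathrm{Ax}_{\pole})$, since $(\lambda x.x)\cdot b\simeq b$ puts $\langle\lambda x.x,b\rangle$ in $\pole$ whenever $b\in\pole$. I expect the main obstacle to be purely presentational: keeping the nested combinator applications and code manipulations readable.
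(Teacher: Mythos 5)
Your proof is correct and takes essentially the same route as the paper's: realise the uniformly $\mathsf{PA}$-provable jump lemmas via Theorem~\ref{formal_realisability_PA} and glue the resulting realisers together with $\mathsf{i}$ and $\mathsf{u}$ from Lemma~\ref{partial_compositionality}, invoking term regularity to pass between closed-term codes and numerals. You are in fact a little more explicit than the paper in two spots: you name the jump formulas $A^{<}$ and $A^{+}$ where the paper's clause (2) loosely writes $\mathrm{TI}(A,\alpha)\to\mathrm{TI}(A,\alpha+1)$ with the same $A$, and in clause (4) you build the realiser of the quantified hypothesis with $\mathsf{u}$ and push cofinality into the $\mathsf{PA}$-proof, where the paper merely asserts the existence of its $\mathsf{k}^{\dag}$.
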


Note that $\mathsf{k}_0$ realises the schema $\mathrm{TI}(0)$; $\mathsf{k}_{suc}$ realises $\mathrm{TI}(\alpha + 1)$ from $\mathrm{TI}(\alpha)$; $\mathsf{k}_{\omega}$ realises $\mathrm{TI}(\omega^{\alpha})$ from $\mathrm{TI}(\alpha)$; $\mathsf{k}_{lim}$ realises $\mathrm{TI}(\alpha)$ for a limit ordinal $\alpha$ if each $\mathrm{TI}([\alpha]_n)$ is realised.

\begin{proof}
\begin{enumerate}
\item For every $\mathcal{L}$-formula $A$, we can primitive recursively find a proof of $\mathrm{TI}(A, 0)\urcorner)$ in $\mathsf{PA}$. Thus, by Theorem~\ref{formal_realisability_PA}, there exists a required $\mathsf{k}_{0}$.

\item In $\mathsf{PA}$, we can primitive recursively find a proof of $\forall \alpha (\mathrm{TI}(A, \alpha) \to \mathrm{TI}(A, \alpha+1))$ for each $\mathcal{L}$-formula $A$.
Thus, by Theorem~\ref{formal_realisability_PA}, there exists a number $\mathsf{k}$ such that $\mathsf{CR} \vdash ( (\mathsf{k} \cdot \alpha) \cdot \ulcorner A \urcorner) \T \ulcorner \mathrm{TI}(A, \alpha) \to \mathrm{TI}(A, \alpha+1) \urcorner.$
For this $\mathsf{k}$, we define $\mathsf{k}_{suc}$ to be such that for any $\mathcal{L}$-sentence $A$,
\[ ( \mathsf{k}_{suc} \cdot \langle e,\alpha \rangle) \cdot \ulcorner A \urcorner \simeq \mathsf{i} \cdot \langle (\mathsf{k} \cdot \alpha) \cdot \ulcorner A \urcorner, e \cdot \ulcorner A \urcorner \rangle.\] 
Then, if $e$ satisfies $I_0(e,\alpha),$ we have $(( \mathsf{k}_{suc} \cdot \langle e,\alpha \rangle) \cdot \ulcorner A \urcorner) \T \ulcorner \mathrm{TI}(A, \dot{\alpha} + 1) \urcorner$, as required.

\item For every $\mathcal{L}$-formula $A$, there exists an $\mathcal{L}$-formula $A'$ such that we can primitive recursively find a proof of $\forall \alpha (\mathrm{TI}(A', \alpha) \to \mathrm{TI}(A, \omega^{\alpha}))$ in $\mathsf{PA}$.
Thus, similar to the proof of the item $2$, there is a required function $\mathsf{k}_{\omega}$.

\item We assume $\forall n I_0(e\cdot n,[\alpha]_n).$ 
From this $e,$ we can primitive recursively define $\mathsf{k}^{\dag}$ such that:
\[
\forall \ulcorner A \urcorner \in \pred{Sent}_{\mathcal{L}}. \ (( \mathsf{k}^{\dag} \cdot \langle e,\alpha \rangle) \cdot \ulcorner A \urcorner) \T \ulcorner \forall \beta < \dot{\alpha} \mathrm{TI}(A, \beta) \urcorner.
\]
In addition, for each $\mathcal{L}$-formula $A$, we obtain $ \forall \beta < \alpha \mathrm{TI}(A, \beta) \to \mathrm{TI}(A, \alpha)$ in $\mathsf{PA}$; thus, according to Thorem~\ref{formal_realisability_PA}, we take a number $\mathsf{k}^{\ddag}$ such that: 
\[((\mathsf{k}^{\ddag} \cdot \alpha)\cdot \ulcorner A \urcorner) \T \ulcorner \forall \beta < \dot{\alpha} \mathrm{TI}(A, \beta) \to \mathrm{TI}(A, \dot{\alpha}) \urcorner.\]
Now, we define $\mathsf{k}_{lim}$ such that: 
\[( \mathsf{k}_{lim} \cdot \langle e,\alpha \rangle) \cdot \ulcorner A \urcorner \simeq \mathsf{i} \cdot \langle(\mathsf{k}^{\ddag} \cdot \alpha)\cdot \ulcorner A \urcorner, (\mathsf{k}^{\dag} \cdot \langle e,\alpha \rangle) \cdot \ulcorner A \urcorner \rangle .\]
We then obtain $(( \mathsf{k}_{lim} \cdot \langle e,\alpha \rangle) \cdot \ulcorner A \urcorner) \T \ulcorner \mathrm{TI}(A, \dot{\alpha}) \urcorner$, as required. \qed
\end{enumerate}
\end{proof}

The following lemma shows the progressiveness of the epsilon function.
\begin{lemma}\label{well-ordering_epsilon}
\begin{enumerate}
\item There exists a number $\mathsf{k}_{\varepsilon \_ 0}$ such that:
\[
\mathsf{CR} \vdash I_0(\mathsf{k}_{\varepsilon \_ 0},\varepsilon_0).
\]

\item There exists a number $\mathsf{k}_{\varepsilon \_ suc}$ such that:
\[
\mathsf{CR} \vdash I_0(e,\varepsilon_{\alpha}) \to I_0(\mathsf{k}_{\varepsilon \_ suc} \cdot \langle e,\alpha \rangle, \varepsilon_{\alpha+1}).
\]

\item There exists a number $\mathsf{k}_{\varepsilon} $ such that:
\[
\mathsf{CR} \vdash \mathrm{Prog}\lambda \alpha I_0(\mathsf{k}_{\varepsilon}\cdot \alpha, \varepsilon_{\alpha}).
\]
\end{enumerate}
\end{lemma}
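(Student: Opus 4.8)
### Proof proposal for Lemma~\ref{well-ordering_epsilon}

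The plan is to obtain the three clauses by assembling the building blocks of Lemma~\ref{well-ordering_basic} in exactly the pattern of the classical well-ordering proof, but tracking the realising indices through the S-m-n theorem and the recursion theorem.

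\textbf{Clause 1 ($I_0$ at $\varepsilon_0$).} Recall $\varepsilon_0$ is the limit of the fundamental sequence $[\varepsilon_0]_n = \omega_n(1)$, i.e.\ iterated $\omega$-exponentiation starting from $1$. So I would first produce, primitive recursively in $n$, an index $f \cdot n$ witnessing $I_0(f\cdot n, \omega_n(1))$: set $f\cdot 0$ to an index for $\mathsf k_0$ composed with $\mathsf k_{suc}$ (to pass from $0$ to $1$), and $f\cdot(n+1) \simeq \mathsf k_\omega \cdot \langle f\cdot n, \omega_n(1)\rangle$, using clause~3 of Lemma~\ref{well-ordering_basic} at each step. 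This recursion is primitive recursive, so such an $f$ exists; a short formal induction in $\mathsf{CR}$ on $n$ (the induction formula is $\forall\gn A\in\pred{Sent}_{\mathcal L}.\ (f\cdot n\cdot\gn A)\T\gn{\mathrm{TI}(A,\dot{\omega_n(1)})}$, which is available since $\mathsf{CR}$ has full induction over $\Lr$) shows $\forall n\, I_0(f\cdot n, [\varepsilon_0]_n)$. Since $\varepsilon_0\in\pred{Lim}$, clause~4 of Lemma~\ref{well-ordering_basic} gives $\mathsf k_{\varepsilon\_0} := \mathsf k_{lim}\cdot\langle f, \varepsilon_0\rangle$ with $I_0(\mathsf k_{\varepsilon\_0},\varepsilon_0)$.

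\textbf{Clause 2 ($\varepsilon_\alpha \to \varepsilon_{\alpha+1}$).} This is the same computation relativised to an arbitrary $\varepsilon_\alpha$ in place of $1$: using $[\varepsilon_{\alpha+1}]_n = \omega_n(\varepsilon_\alpha+1)$, given $e$ with $I_0(e,\varepsilon_\alpha)$, define primitive recursively $g\cdot 0 \simeq \mathsf k_{suc}\cdot\langle e,\varepsilon_\alpha\rangle$ (stepping to $\varepsilon_\alpha+1$) and $g\cdot(n+1)\simeq \mathsf k_\omega\cdot\langle g\cdot n,\omega_n(\varepsilon_\alpha+1)\rangle$, so that $\forall n\, I_0(g\cdot n,[\varepsilon_{\alpha+1}]_n)$ by formal induction in $\mathsf{CR}$; then apply $\mathsf k_{lim}$ to get $\mathsf k_{\varepsilon\_suc}\cdot\langle e,\alpha\rangle := \mathsf k_{lim}\cdot\langle g,\varepsilon_{\alpha+1}\rangle$. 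Since $g$ is obtained uniformly (primitive recursively) in $\langle e,\alpha\rangle$, this defines the required single index $\mathsf k_{\varepsilon\_suc}$ by S-m-n.

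\textbf{Clause 3 (progressiveness of $\alpha\mapsto I_0(\cdot,\varepsilon_\alpha)$).} Here I would use the recursion theorem to fix an index $\mathsf k_\varepsilon$ satisfying, for all $\alpha$,
\[
\mathsf k_\varepsilon\cdot\alpha \simeq
\begin{cases}
\mathsf k_{\varepsilon\_0} & \text{if } \alpha = 0,\\
\mathsf k_{\varepsilon\_suc}\cdot\langle \mathsf k_\varepsilon\cdot(\alpha-1),\alpha-1\rangle & \text{if }\alpha\in\pred{Suc},\\
\mathsf k_{lim}\cdot\langle \lambda n.(\mathsf k_\varepsilon\cdot[\alpha]_n),\varepsilon_\alpha\rangle & \text{if }\alpha\in\pred{Lim}.
\end{cases}
\]
The claim $\mathrm{Prog}\,\lambda\alpha I_0(\mathsf k_\varepsilon\cdot\alpha,\varepsilon_\alpha)$, i.e.\ $\forall\alpha(\forall\beta<\alpha\, I_0(\mathsf k_\varepsilon\cdot\beta,\varepsilon_\beta)\to I_0(\mathsf k_\varepsilon\cdot\alpha,\varepsilon_\alpha))$, is then proved by cases on $\alpha$ inside $\mathsf{CR}$: the zero case is clause~1; the successor case uses the hypothesis at $\alpha-1$ and clause~2; and the limit case needs $\forall n\, I_0(\mathsf k_\varepsilon\cdot[\alpha]_n,\varepsilon_{[\alpha]_n})$, which follows from the progressiveness hypothesis together with the standard fact (provable in $\mathsf{PA}$, hence available here) that $[\alpha]_n<\alpha$ and $\varepsilon_{[\alpha]_n} = [\varepsilon_\alpha]_n$ for $\alpha\in\pred{Lim}$, so that clause~4 of Lemma~\ref{well-ordering_basic} applies with $e = \lambda n.(\mathsf k_\varepsilon\cdot[\alpha]_n)$.

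\textbf{Main obstacle.} The routine parts are the S-m-n bookkeeping and verifying that each recursion is primitive recursive. The delicate point is the limit case of clause~3: one must make sure the index $\lambda n.(\mathsf k_\varepsilon\cdot[\alpha]_n)$ is genuinely total on the relevant arguments and that the equation $\varepsilon_{[\alpha]_n} = [\varepsilon_\alpha]_n$ used to match clause~4 of Lemma~\ref{well-ordering_basic} holds on the nose in $\pred{OT}$ — this is where the specific choice of fundamental sequences for $\varepsilon_\alpha$ listed in the excerpt is used — and that the self-reference introduced by the recursion theorem does not obstruct the formal induction, which is legitimate precisely because the progressiveness statement is a single $\Lr$-formula and $\mathsf{CR}$ has unrestricted induction.
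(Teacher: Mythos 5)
Your proposal is correct and follows essentially the same route as the paper: iterate $\mathsf k_{suc}$ and $\mathsf k_\omega$ along the fundamental sequences $[\varepsilon_0]_n$ and $[\varepsilon_{\alpha+1}]_n$ and close with $\mathsf k_{lim}$ for clauses 1 and 2, then define $\mathsf k_\varepsilon$ by the recursion theorem with the same zero/successor/limit case split and verify progressiveness by cases, using $[\varepsilon_\alpha]_n=\varepsilon_{[\alpha]_n}$ at limits. The only difference is that you spell out the S-m-n bookkeeping and the formal induction in $\mathsf{CR}$ more explicitly than the paper does.
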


\begin{proof}
\begin{enumerate}
\item We define a number $\mathsf{k}$ as follows:

$\begin{cases} 
\mathsf{k} \cdot0 :\simeq \mathsf{k}_{suc}\cdot \langle \mathsf{k}_0, 0\rangle, \\
\mathsf{k}\cdot(n+1) :\simeq \mathsf{k}_{\omega} \cdot \langle \mathsf{k}\cdot n, [\varepsilon_0]_n) & \text{for each $n$}.
\end{cases}$

Then, we clearly have $\mathsf{CR} \vdash \forall n (I_0(\mathsf{k} \cdot n, [\varepsilon_0]_n)$,
hence by item $4$ in Lemma~\ref{well-ordering_basic}, the number $\mathsf{k}_{\varepsilon \_ 0} := \mathsf{k}_{lim} \cdot \langle \mathsf{k}, \varepsilon_0\rangle$ is the required one.

\item The proof is nearly the same as that of item $1$.

\item For an ordinal $\alpha$ and a number $e$, let $e[\alpha]$ be such that $e[\alpha]\cdot n :\simeq e\cdot [\alpha]_n.$
Here, $\mathsf{k}_{\varepsilon}$ is defined as follows:

$\begin{cases} 
 \mathsf{k}_{\varepsilon}\cdot 0 :\simeq \mathsf{k}_{\varepsilon\_0} \\
\mathsf{k}_{\varepsilon}\cdot (\alpha+1) :\simeq \mathsf{k}_{\varepsilon\_suc}\cdot \langle \mathsf{k}_{\varepsilon}\cdot \alpha, \alpha\rangle \\
\mathsf{k}_{\varepsilon}\cdot \alpha :\simeq \mathsf{k}_{lim} \cdot \langle \mathsf{k}_{\varepsilon}[\alpha], \varepsilon_{\alpha}\rangle & \text{for $\alpha \in \pred{Lim}.$}
\end{cases}$

Then, to show the claim,
we take any $\alpha$ and assume $\forall \beta < \alpha  I_0(\mathsf{k}_{\varepsilon}\cdot \beta, \varepsilon_{\beta}).$
\begin{itemize}
\item If $\alpha = 0,$ then the conclusion $I_0(\mathsf{k}_{\varepsilon}\cdot \alpha, \varepsilon_{\alpha})$ is clear by item~1.
\item If $\alpha \in \pred{Suc},$ then by the assumption, we obtain $I_0(\mathsf{k}_{\varepsilon}\cdot (\alpha-1), \varepsilon_{\alpha-1})$;
thus, we obtain the conclusion by item~2.
\item If $\alpha \in \pred{Lim},$ then we obtain $I_0(\mathsf{k}_{\varepsilon}[\alpha]\cdot n, \varepsilon_{[\alpha]_n})$ for each $n$.
Here, since $[\varepsilon_{\alpha}]_n = \varepsilon_{[\alpha]_n},$ Lemma~\ref{well-ordering_basic} implies that $I_0(\mathsf{k}_{lim}\cdot \langle \mathsf{k}_{\varepsilon}[\alpha], \varepsilon_{\alpha} \rangle, \varepsilon_{\alpha}\rangle$; thus, it follows that $I_0(\mathsf{k}_{\varepsilon}\cdot \alpha, \varepsilon_{\alpha})$. \qed
\end{itemize}
\end{enumerate}
\end{proof}

\begin{lemma}\label{well-ordering_proof_CR}
For each $\mathcal{L}$-formula $A$ and for each ordinal number $\alpha < \varepsilon_{\varepsilon_0}$, we can find a term $s$ such that $\mathsf{CR} \vdash s \T \ulcorner \mathrm{TI}(\alpha, A) \urcorner.$
\end{lemma}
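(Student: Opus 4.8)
The plan is to imitate, at the level of realisers, the standard well-ordering proof of $\mathsf{CT}$ sketched above, using the combinators supplied by Lemmas~\ref{partial_compositionality} and~\ref{formal_realisability_PA} together with the progressiveness established in Lemma~\ref{well-ordering_epsilon}. The point of departure is Lemma~\ref{well-ordering_epsilon}(3), which gives $\mathsf{CR}\vdash\mathrm{Prog}\lambda\alpha I_0(\mathsf{k}_\varepsilon\cdot\alpha,\varepsilon_\alpha)$. From this I would first derive, for each fixed $\beta<\varepsilon_0$, that $\mathsf{CR}\vdash I_0(\mathsf{k}_\varepsilon\cdot\beta,\varepsilon_\beta)$. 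This is the step where moving from $\mathsf{PA}$ to $\mathsf{CR}$ is used: $\mathsf{CR}$ contains $\mathsf{PA}$ formulated over $\Lr$ \emph{with induction for all $\Lr$-formulas}, and the usual derivation of the schema $\mathrm{TI}({<}\varepsilon_0)$ from ordinary induction is purely syntactic, hence goes through verbatim for formulas of any language that carries full induction. Applying the corresponding instance of transfinite induction to the $\Lr$-formula $I_0(\mathsf{k}_\varepsilon\cdot x,\varepsilon_x)$ and discharging its hypothesis by the progressiveness above yields $\mathsf{CR}\vdash I_0(\mathsf{k}_\varepsilon\cdot\beta,\varepsilon_\beta)$. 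Unwound, this reads: for every $\mathcal L$-formula $B$ with at most one free variable, $\mathsf{CR}\vdash((\mathsf{k}_\varepsilon\cdot\beta)\cdot\gn B)\T\gn{\mathrm{TI}(B,\varepsilon_\beta)}$.

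Next I would descend from $\varepsilon_\beta$ to an arbitrary $\gamma<\varepsilon_{\varepsilon_0}$. Fix the target $\mathcal L$-formula $A=A(x)$ and an ordinal $\gamma<\varepsilon_{\varepsilon_0}$. Since $\varepsilon_0$ is a limit, the ordinals $\varepsilon_\beta$ with $\beta<\varepsilon_0$ are cofinal in $\varepsilon_{\varepsilon_0}$, so I may pick $\beta<\varepsilon_0$ with $\gamma<\varepsilon_\beta$. Put $\tilde A(x):=\forall y\,(y\le x\to A(y))$. A routine argument in $\mathsf{PA}$, whose derivation can be found primitive recursively from $\gn A$, shows $\mathrm{Prog}\lambda x A(x)\to\mathrm{Prog}\lambda x\tilde A(x)$ and $\tilde A(\varepsilon_\beta)\to A(\gamma)$, whence $\mathsf{PA}\vdash\mathrm{TI}(\tilde A,\varepsilon_\beta)\to\mathrm{TI}(A,\gamma)$. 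Theorem~\ref{formal_realisability_PA} then provides a number $c$ with $\mathsf{CR}\vdash c\T\gn{\mathrm{TI}(\tilde A,\varepsilon_\beta)\to\mathrm{TI}(A,\gamma)}$.

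Finally I would assemble the realiser. Since $\mathsf{CR}\vdash((\mathsf{k}_\varepsilon\cdot\beta)\cdot\gn{\tilde A})\T\gn{\mathrm{TI}(\tilde A,\varepsilon_\beta)}$ (the case $B=\tilde A$ of what was established above), applying Lemma~\ref{partial_compositionality}(1) to the sentences $\mathrm{TI}(\tilde A,\varepsilon_\beta)$ and $\mathrm{TI}(A,\gamma)$ yields
\[
\mathsf{CR}\vdash\bigl(\mathsf{i}\cdot\langle c,(\mathsf{k}_\varepsilon\cdot\beta)\cdot\gn{\tilde A}\rangle\bigr)\T\gn{\mathrm{TI}(A,\gamma)}.
\]
All of $\beta$, $\gn{\tilde A}$, $c$ and the index $\mathsf{k}_\varepsilon$ are concrete, and the displayed computation provably terminates, so its value is a numeral that can be taken as the required closed term $s$; together with the reflection rule and Theorem~\ref{fact:strength_CT} this is what will be needed to identify the $\mathcal L$-consequences of $\mathsf{CR}^+$ with those of $\mathsf{CT}$. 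The step I expect to be the main obstacle is the first one: checking carefully that transfinite induction up to each $\beta<\varepsilon_0$ really is available in $\mathsf{CR}$ for the $\Lr$-formula $I_0(\mathsf{k}_\varepsilon\cdot x,\varepsilon_x)$, i.e.\ that keeping full induction over the expanded language $\Lr$ suffices and the new predicates $\T$, $\F$ cause no obstruction. The rest is the bookkeeping already rehearsed for $\mathsf{CT}$, now carried out with realisers.
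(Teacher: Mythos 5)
Your proposal is correct and follows essentially the same route as the paper: invoke the progressiveness of $I_0(\mathsf{k}_\varepsilon\cdot x,\varepsilon_x)$ from Lemma~\ref{well-ordering_epsilon}(3), apply transfinite induction below $\varepsilon_0$ for this $\Lr$-formula (available because $\mathsf{CR}$ has full induction over $\Lr$) to get $I_0(\mathsf{k}_\varepsilon\cdot\beta,\varepsilon_\beta)$, and then descend to an arbitrary $\alpha\leq\varepsilon_\beta$. Your treatment of the final descent via $\tilde A$ and Lemma~\ref{partial_compositionality}(1) is in fact more explicit than the paper's, which compresses it into the remark that the required term is primitive recursively obtainable from $(\mathsf{k}_\varepsilon\cdot\beta)\cdot\gn{A}$.
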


\begin{proof}
We fix any ordinal $\alpha < \varepsilon_{\varepsilon_0},$ and then there exists an ordinal $\beta < \varepsilon_0$ such that $\alpha \leq \varepsilon_{\beta} < \varepsilon_{\varepsilon_0}.$
Thus, by taking any $\mathcal{L}$-formula $A,$ it is sufficient to prove $ ((\mathsf{k}_{\varepsilon}\cdot \beta ) \cdot \ulcorner A \urcorner) \T \ulcorner \mathrm{TI}(\varepsilon_{\beta}, A) \urcorner$
because we can primitive recursively find a required term $s$ from the term $(\mathsf{k}_{\varepsilon}\cdot \beta ) \cdot \ulcorner A \urcorner$.

Since $\mathsf{PA}$ derives any transfinite induction for $\beta < \varepsilon_0,$ we have $\mathrm{TI}(\beta, I_0(\mathsf{k}_{\varepsilon}\cdot x, \varepsilon_x))$.
Therefore, according to item~3 in Lemma~\ref{well-ordering_epsilon}, we have $I_0(\mathsf{k}_{\varepsilon}\cdot \beta, \varepsilon_{\beta})$.
Thus, it follows that $((\mathsf{k}_{\varepsilon}\cdot \beta) \cdot \ulcorner A \urcorner) \T \ulcorner \mathrm{TI}(\varepsilon_{\beta}, A) \urcorner,$ as desired. \qed
\end{proof}

By Lemma~\ref{well-ordering_proof_CR} and the reflection rule,  $\mathsf{CR}^+$ yields the formula $\mathrm{TI}(\alpha, A)$
for each $\mathcal{L}$-formula $A$ and  $\alpha < \varepsilon_{\varepsilon_0}$.
Thus, $\mathsf{CR}^+$ derives every theorem of $\mathsf{PA} + \mathsf{TI}({<} \varepsilon_{\varepsilon_0})$.
Therefore, according to Proposition~\ref{prop:CT_and_CR_with_empty} and Theorem~\ref{fact:strength_CT}, the proof of Theorem~\ref{thm:strength_CR+} is completed.


\section{Ramified Realisability}\label{sec:ramified_realisability}

In the previous sections, we formulate a typed theory $\mathsf{CR}$ of classical relisability and study its proof-theoretic properties.
Moreover, we show that an extension $\mathsf{CR}^{\emptyset}$ is closely related to the typed theory $\mathsf{CT}$ of truth.
In formal truth theory, various generalisations of $\mathsf{CT}$ have been proposed by authors.
It is therefore expected to formulate theories of realisability corresponding to such truth theories.

One natural direction is to remove type restriction on $\mathsf{CR}$, that is, to give falsification/realisation conditions also to sentences that contain falsification or realisation predicates.
Such an approach lead, in~\cite{hayashi2025friedman}, to a theory $\mathsf{FSR}$, corresponding to the well-known truth theory $\mathsf{FS}$ by Friedman and Sheard \cite{friedman1987axiomatic}.
Although $\mathsf{FSR}$ has all of the compositional axioms for $\mathcal{L}_R$-sentences, it is difficult to strengthen the theory further because $\mathsf{FSR}$ almost carries over $\omega$-inconsistency, an undesirable property of $\mathsf{FS}$.
To be more precise, the theory obtained by adding to $\mathsf{FSR}$ the reflection principle for $\mathsf{FSR}$ derives $\pole \neq \emptyset$, which means, by Lemma~\ref{lem:continuation_constant}, that false sentences are realisable~\cite[Proposition~12]{hayashi2025friedman}.

Another way of generalisation is the hierarchical approach, which is to add meta truth predicates repeatedly.
In particular, the well-known theory $\mathsf{RT}_{< \gamma}$ (ramified truth) has truth predicates: $\T_0, T_1, \dots, T_{\beta}, \dots$ ($\beta < \gamma$), each of which has the compositional axioms for sentences containing only truth predicates with lower indices. 
Here $\gamma$ can be increased up to transfinite ordinals, thereby stronger theories are obtained. 

The purpose of this section is then to formulate theories $\mathsf{RR}_{< \gamma}$ of ramified realisability, corresponding to $\mathsf{RT}_{< \gamma}$.
Moreover, we generalise the main results of previous sections to those for $\mathsf{RR}_{< \gamma}$.

Firstly, we introduce $\mathsf{RT}_{< \gamma}$.
Taking any $\gamma < \pred{OT}$,
we define $\Lt^{< \gamma} := \mathcal{L} \cup \{ \T_{\beta} \ | \ \beta < \gamma \}$.
We now fix some G\"{o}del-numbering for $\Lt^{< \gamma}$.
Then, $\pred{Sent}_{\mathrm{R}}^{< \gamma}(x)$ means that $x$ is the code of an $\Lt^{< \gamma}$-sentence.
\begin{definition}[cf.~{\cite[Definition~9.2]{halbach2014axiomatic}}]\label{defn:RT}
The $\Lt^{< \gamma}$-theory $\mathsf{RT}_{< \gamma}$ (ramified truth) consists of $\mathsf{PA}$ formulated for the language $\Lt^{< \gamma}$ plus the following three axioms for each $\alpha < \beta < \gamma$.
\begin{description}

\item[$(\mathrm{RT}1_{\beta})$] $\forall \ulcorner s \urcorner, \ulcorner t \urcorner. \ \forall \ulcorner A_x \urcorner \in \pred{Sent}_{\mathrm{T}}^{< \beta}. \ \pred{Eq}( \ulcorner s \urcorner,\ulcorner t \urcorner) \to ( \T_{\beta} \ulcorner A(s) \urcorner \to \T_{\beta} \ulcorner A(t) \urcorner )$.

\item[$(\mathrm{RT}2_{\beta})$] $\T_{\beta} \ulcorner P(\dot{\vec{x}}) \urcorner  \leftrightarrow P(\vec{x}) $, for each atomic predicate $P(\vec{x})$ of $\mathcal{L}$.

\item[$(\mathrm{RT}3_{\beta})$] $\forall \gn {A },\gn {B} \in \pred{Sent}_{\mathrm{T}}^{<\beta}. \ \T_{\beta}\gn {A \to B} \leftrightarrow ( \T_{\beta} \gn A \to \T_{\beta}\gn B ) $.

\item[$(\mathrm{RT}4_{\beta})$] $\forall \gn {A_v} \in \pred{Sent}_{\mathrm{T}}^{<\beta}. \ \T_{\beta}\gn {\forall v A} \leftrightarrow \forall x \T_{\beta} \gn {A(\dot{x})} $.

\item[$(\mathrm{RT}5_{\beta})$] $\forall \ulcorner A \urcorner \in \pred{Sent}_{\mathrm{T}}^{<\alpha}. \ \T_{\beta} \ulcorner \T_{\alpha} \ulcorner A \urcorner \urcorner \leftrightarrow \T_{\alpha} \ulcorner A \urcorner$.

\item[$(\mathrm{RT}6_{\beta})$] $\forall \delta < \beta ( \forall \ulcorner A \urcorner \in \pred{Sent}_{\mathrm{T}}^{<\delta}. \ \T_{\beta} \ulcorner \T_{\delta} \ulcorner A \urcorner \urcorner \leftrightarrow \T_{\beta} \ulcorner A \urcorner )$.

\end{description}
\end{definition}

In the above definition, $(\mathrm{RT}1_{\beta})$ is just term invariance (cf.~Proposition~\ref{term-regular});
$(\mathrm{RT}2_{\beta})$ to $(\mathrm{RT}4_{\beta})$ are hierarchical generalisations of the compositional axioms of $\mathsf{CT}$ (Definition~\ref{defn:CT});
$(\mathrm{RT}5_{\beta})$ expresses Tarski-biconditional for $\T_{\alpha} \ulcorner A \urcorner$;
$(\mathrm{RT}6_{\beta})$ means that Tarski-biconditional for $\mathcal{L}_{\mathrm{T}}^{< \delta}$ holds inside $\T_{\beta}$.

Since $(\mathrm{RT}1_{\beta})$ to $(\mathrm{RT}4_{\beta})$ are just hierarchical generalisations of the axioms of $\mathsf{CT}$, it is straightworward to give their realisabilitistic counterparts.
As for $(\mathrm{RT}5_{\beta})$ and $(\mathrm{RT}6_{\beta})$, we need to paraphrase Tarski-biconditional in terms of realisability.
As Tarski-biconditional tells equivalence between formal truth $\T_{\beta} \ulcorner A \urcorner$ and explicit truth $A$, one natural idea is to formulate explicit realisation $x \in \lvert A \rvert$ as a particular formula.
That way, Tarski-biconditional can be understood as the equivalence between formal realisation $x \T_{\beta} \ulcorner A \urcorner$ and explicit realisation $x \in \lvert A \rvert$.
So, following \cite{hayashi2025friedman}, we first define explicit falsification $x \in \lVert A \rVert$, from which explicit realisation $x \in \lvert A \rvert$ is also defined.

Let $\mathcal{L}_{\pole} = \mathcal{L} \cup \{ x \in \pole \}$.
Taking any ordinal $\gamma \in \pred{OT}$, we define $\mathcal{L}_{\mathrm{R}}^{< \gamma} := \mathcal{L}_{\pole}
\cup \{ x\F_{\beta}y \ | \ \beta < \gamma \}
\cup \{ x\T_{\beta}y \ | \ \beta < \gamma \}$.
We now fix some G\"{o}del-numbering for $\mathcal{L}_{\mathrm{R}}^{< \gamma}$.

\begin{definition}[cf.~{\cite[Definition~9]{hayashi2025friedman}}]\label{defn:emplicit_realisability}
For each term $s$ and  
$\Lr^{< \gamma} \mhyph$formula $A$,
we inductively define $\Lr^{< \gamma}$-formulas $s \in \lvert A \rvert$ and $s \in \lVert A \rVert,$
with renaming bound variables in $A$ if necessary.
\begin{itemize}
\item $s \in \lVert P \vec x \rVert  \ = \ P \vec x \to s \in \pole,$ if $P \in \mathcal{L}_{\pole}$;
\item $s \in \lVert t \F_{\beta} u \rVert  \ = \  
s \F_{\beta}(t\dot{\in} \lVert u \rVert)$, for $\beta < \gamma$;
\item $s \in \lVert t \T_{\beta} u \rVert  \ = \  
s \F_{\beta}(t\dot{\in} \lvert u \rvert)$, for $\beta < \gamma$;
\item $s \in \lVert A \to B \rVert  \ = \  (s)_{0} \in \lvert A \rvert \land (s)_{1} \in \lVert B \rVert;$
\item $s \in \lVert \forall x A(x) \rVert  \ = \  (s)_1 \in \lVert A((s)_0) \rVert;$
\item $s \in \lvert A \rvert  \ = \  \forall a(a \in \lVert A \rVert \to \langle s, a \rangle \in \pole)$ for a fresh variable $a$.
\end{itemize}
Here, $x \dot{\in} \lvert y \rvert$ is a binary primitive recursive function symbol such that $\mathsf{PA}$ derives $x \dot{\in} |\ulcorner A \urcorner| = \ulcorner \dot{x} \in A \urcorner$ for any $\Lr^{< \gamma}$-sentence $A$.
The existence of such a function is ensured by the primitive recursion theorem. 
Moreover, we define such a symbol $x \dot{\in} \lvert y \rvert$ so that it does not return the code of any $\Lr^{< \gamma}$-sentence when $y$ is not. 
The other binary primitive recursive function symbol $x \dot{\in} \lVert y \rVert$ is similarly defined.
\end{definition}

%
%

Based on the above definition of explicit falsification and realisation, we now formualate a counterpart $\mathsf{RR}_{< \gamma}$ of $\mathsf{RT}_{< \gamma}$.

Let $\pred{Sent}_{\mathrm{R}}^{< \gamma}(x)$ mean that $x$ is the code of an $\Lt^{< \gamma}$-sentence.
\begin{definition}\label{defn:RR}

The $\Lr^{< \gamma}$-theory $\mathsf{RR}_{< \gamma}$ (ramified realisability) consists of $\mathsf{PA}$ formulated for the language $\Lr^{< \gamma}$ plus the following three axioms for each $\alpha < \beta < \gamma$.

\begin{description}
\item[$(\mathrm{RR}1_{\beta})$] $\pred{T}_1(a,b,c) \to ( \pred (c)_0 \in \pole \to \langle a, b \rangle \in \pole)$.

\item[$(\mathrm{RR}2_{\beta})$] $\forall \ulcorner A \urcorner \in \pred{Sent}_{\mathrm{R}}^{< \beta}. \ a \T_{\beta} \ulcorner A \urcorner \leftrightarrow \forall b ( b \F_{\beta} \ulcorner A \urcorner \to \langle a, b \rangle \in \pole )$.

\item[$(\mathrm{RR}3_{\beta})$] $\forall \ulcorner s \urcorner, \ulcorner t \urcorner. \ \forall \ulcorner A_x \urcorner \in \pred{Sent}_{\mathrm{R}}^{< \beta}. \ \pred{Eq}( \ulcorner s \urcorner,\ulcorner t \urcorner) \to ( a \F_{\beta} \ulcorner A(s) \urcorner \to a \F_{\beta} \ulcorner A(t) \urcorner )$.

\item[$(\mathrm{RR}4_{\beta})$] $a \F_{\beta}\ulcorner P \dot{\vec{x}} \urcorner \leftrightarrow (P(\vec{x}) \to \vec{x} \in \pole)$, for each atomic predicate $P(\vec{x})$ of $\mathcal{L}_{\pole}$.


\item[$(\mathrm{RR}5_{\beta})$] $\forall \ulcorner A \urcorner, \ulcorner B \urcorner \in \pred{Sent}_{\mathrm{R}}^{< \beta}. \ a \F_{\beta}\ulcorner A \to B \urcorner \leftrightarrow \big( (a)_0 \T_{\beta} \ulcorner A \urcorner \land (a)_1\F_{\beta}\ulcorner B \urcorner \big)$.

\item[$(\mathrm{RR}6_{\beta})$] $\forall \ulcorner A_x \urcorner \in \pred{Sent}_{\mathrm{R}}^{< \beta}. \ a \F_{\beta}\ulcorner \forall x A \urcorner \leftrightarrow (a)_1 \F_{\beta} \ulcorner A((\dot{a})_0) \urcorner $.

\item[$(\mathrm{RR}7_{\beta})$] $\forall \ulcorner A \urcorner \in \pred{Sent}_{\mathrm{R}}^{<\alpha}. \ a\F_{\beta} \ulcorner \dot{b} \F_{\alpha} \ulcorner A \urcorner \urcorner \leftrightarrow a \in \lVert b\F_{\alpha} \ulcorner A \urcorner \rVert$.

\item[$(\mathrm{RR}8_{\beta})$] $\forall \ulcorner A \urcorner \in \pred{Sent}_{\mathrm{R}}^{<\alpha}. \ a\F_{\beta} \ulcorner \dot{b} \T_{\alpha} \ulcorner A \urcorner \urcorner \leftrightarrow a \in \lVert b\T_{\alpha} \ulcorner A \urcorner \rVert$.

\item[$(\mathrm{RR}9_{\beta})$] $\forall \delta < \beta ( \forall \ulcorner A \urcorner \in \pred{Sent}_{\mathrm{R}}^{<\delta}. \ a\F_{\beta} \ulcorner \dot{b} \F_{\delta} \ulcorner A \urcorner \urcorner \leftrightarrow a\F_{\beta} \ulcorner \dot{b} \in \lVert A \rVert \urcorner )$.

\item[$(\mathrm{RR}10_{\beta})$] $\forall \delta < \beta ( \forall \ulcorner A \urcorner \in \pred{Sent}_{\mathrm{R}}^{<\delta}. \ a\F_{\beta} \ulcorner \dot{b} \T_{\delta} \ulcorner A \urcorner \urcorner \leftrightarrow a\F_{\beta} \ulcorner \dot{b} \in \lvert A \rvert \urcorner )$.

\end{description}

$\mathsf{RR}_{\gamma}$ is then defined as $\mathsf{RR}_{< \gamma + 1}$.
Similar to Definition~\ref{defn:reflection_rule}, let $\mathsf{RR}_{<\gamma}^{+}$ be 
$\mathsf{RR}_{<\gamma}$ augmented with the following reflection rule:
\begin{center} \
\infer[]{A}{t \T_{\beta}\ulcorner A \urcorner} for $A \in \mathcal{L}$ and $\beta < \gamma$.
\end{center}
Similarly, $\mathsf{RR}_{<\gamma}^{\emptyset}$ is defined to be $\mathsf{RR}_{<\gamma} + \pole = \emptyset$.
\end{definition}

We can easily verify the following:
\begin{corollary}[cf.~{\cite[Lemma~9]{hayashi2025friedman}}]\label{cor:explicit_realisability}
The following are derivable for each $\beta < \gamma$ in $\mathsf{RR}_{< \gamma}$:
\begin{itemize}
\item $\pred{Sent}_{\pred{R}}^{< \beta}(u) \to \bigl( s \in \lvert t \F_{\beta} u \rvert  \leftrightarrow 
s \T_{\beta}(t\dot{\in} \lVert u \rVert) \bigr)$
\item $\pred{Sent}_{\pred{R}}^{< \beta}(u) \to \bigl( s \in \lvert t \T_{\beta} u \rvert  \leftrightarrow  
s \T_{\beta}(t\dot{\in} \lvert u \rvert) \bigr)$
\end{itemize}
\end{corollary}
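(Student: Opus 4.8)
The plan is to obtain both biconditionals simply by unwinding the defining clauses of explicit realisation and falsification from Definition~\ref{defn:emplicit_realisability} and then appealing to the axiom $(\mathrm{RR}2_{\beta})$ (cf.\ Definition~\ref{defn:RR}), which characterises $a \T_{\beta} \gn A$ as $\forall b\,(b \F_{\beta} \gn A \to \langle a,b\rangle \in \pole)$ for codes $\gn A$ of $\Lr^{<\beta}$-sentences.

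For the first item I would proceed as follows. By the $\lvert\cdot\rvert$-clause of Definition~\ref{defn:emplicit_realisability} followed by the clause for $s \in \lVert t \F_{\beta} u \rVert$, the formula $s \in \lvert t \F_{\beta} u \rvert$ \emph{is}, as a matter of definition, the formula $\forall a\,\bigl(a \F_{\beta}(t \dot{\in} \lVert u \rVert) \to \langle s,a\rangle \in \pole\bigr)$, so nothing is proved at this step beyond unfolding two clauses. Reasoning now in $\mathsf{RR}_{<\gamma}$ under the hypothesis $\pred{Sent}_{\pred{R}}^{<\beta}(u)$, the one genuinely necessary observation is that $t \dot{\in} \lVert u \rVert$ then codes an $\Lr^{<\beta}$-sentence, i.e.\ $\mathsf{PA} \vdash \pred{Sent}_{\pred{R}}^{<\beta}(u) \to \pred{Sent}_{\pred{R}}^{<\beta}(t \dot{\in} \lVert u \rVert)$. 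This is a routine structural fact: by induction on the build-up of $\Lr^{<\beta}$-formulas, the syntactic map $\gn A \mapsto \gn{\lVert A \rVert}$ underlying $\dot{\in}\lVert\cdot\rVert$ sends a code of an $\Lr^{<\beta}$-formula to a code of an $\Lr^{<\beta}$-formula with at most one further free variable --- the only predicates it introduces are $\pole$ and $\F_{\delta}$ for $\delta$'s already occurring in $A$ (hence $\delta < \beta$), and bound-variable renaming together with the fresh variable $a$ from the $\lvert\cdot\rvert$-clause do not raise the level --- whereupon substituting the numeral $\num t$ closes the formula. Granting this, $(\mathrm{RR}2_{\beta})$ applied at the code $t \dot{\in} \lVert u \rVert$ with realiser $s$ gives
\[
s \T_{\beta}(t \dot{\in} \lVert u \rVert)\ \leftrightarrow\ \forall a\,\bigl(a \F_{\beta}(t \dot{\in} \lVert u \rVert) \to \langle s,a\rangle \in \pole\bigr),
\]
whose right-hand side is precisely $s \in \lvert t \F_{\beta} u \rvert$ by the unfolding above. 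This is the first claim.

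The second item is entirely parallel, using instead the clause $s \in \lVert t \T_{\beta} u \rVert \ = \ s \F_{\beta}(t \dot{\in} \lvert u \rvert)$; the only extra remark is that $t \dot{\in} \lvert u \rvert$ also codes an $\Lr^{<\beta}$-sentence whenever $u$ does, because $s \in \lvert A \rvert$ is $\forall a\,(a \in \lVert A \rVert \to \langle s,a\rangle \in \pole)$ and so sits at the level of $\lVert A \rVert$. As for difficulty, there is essentially none: the whole argument is unwinding of Definition~\ref{defn:emplicit_realisability} plus a single instance of $(\mathrm{RR}2_{\beta})$, and in particular neither the reflection rule nor the axiom $\pole = \emptyset$ is used. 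The sole place any work occurs is the level-preservation bookkeeping for $\dot{\in}\lVert\cdot\rVert$ and $\dot{\in}\lvert\cdot\rvert$ that licenses the application of $(\mathrm{RR}2_{\beta})$, and this is already implicit in the way those function symbols are introduced.
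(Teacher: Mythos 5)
Your proof is correct and is exactly the ``easy verification'' the paper has in mind (the paper states the corollary without proof): unfolding the $\lvert\cdot\rvert$- and $\lVert\cdot\rVert$-clauses of Definition~\ref{defn:emplicit_realisability} turns each left-hand side into the right-hand side of $(\mathrm{RR}2_{\beta})$ at the code $t\dot{\in}\lVert u\rVert$ (resp.\ $t\dot{\in}\lvert u\rvert$), and the level-preservation fact you isolate is precisely the observation the paper records later, namely that $A\in\pred{Sent}_{\mathrm{R}}^{<\beta}$ implies $m\in\lVert A\rVert$ and $m\in\lvert A\rvert$ are also $\Lr^{<\beta}$-sentences.
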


\begin{corollary}\label{cor:RR}
The following are derived for each $\alpha < \beta < \gamma$ in $\mathsf{RR}_{< \gamma}$:
\begin{description}
\item[$(\mathrm{RR}3_{\beta})'$] $\forall \ulcorner s \urcorner, \ulcorner t \urcorner. \ \forall \ulcorner A_x \urcorner \in \pred{Sent}_{\mathrm{R}}^{< \beta}. \ \pred{Eq}( \ulcorner s \urcorner,\ulcorner t \urcorner) \to ( a \T_{\beta} \ulcorner A(s) \urcorner \to a \T_{\beta} \ulcorner A(t) \urcorner )$.

\item[$(\mathrm{RR}7_{\beta})'$] $\forall \ulcorner A \urcorner \in \pred{Sent}_{\mathrm{R}}^{<\alpha}. \ a\T_{\beta} \ulcorner \dot{b} \F_{\alpha} \ulcorner A \urcorner \urcorner \leftrightarrow a \in \lvert b\F_{\alpha} \ulcorner A \urcorner \rvert$.

\item[$(\mathrm{RR}8_{\beta})'$] $\forall \ulcorner A \urcorner \in \pred{Sent}_{\mathrm{R}}^{<\alpha}. \ a\T_{\beta} \ulcorner \dot{b} \T_{\alpha} \ulcorner A \urcorner \urcorner \leftrightarrow a \in \lvert b\T_{\alpha} \ulcorner A \urcorner \rvert$.

\item[$(\mathrm{RR}9_{\beta})'$] $\forall \delta < \beta ( \forall \ulcorner A \urcorner \in \pred{Sent}_{\mathrm{R}}^{<\delta}. \ a\T_{\beta} \ulcorner \dot{b} \F_{\delta} \ulcorner A \urcorner \urcorner \leftrightarrow a\T_{\beta} \ulcorner \dot{b} \in \lVert A \rVert \urcorner )$.

\item[$(\mathrm{RR}10_{\beta})'$] $\forall \delta < \beta ( \forall \ulcorner A \urcorner \in \pred{Sent}_{\mathrm{R}}^{<\delta}. \ a\T_{\beta} \ulcorner \dot{b} \T_{\delta} \ulcorner A \urcorner \urcorner \leftrightarrow a\T_{\beta} \ulcorner \dot{b} \in \lvert A \rvert \urcorner )$.
\end{description}
\end{corollary}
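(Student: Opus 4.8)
The plan is to derive each of the five assertions from the corresponding ``$\F_{\beta}$-axiom'' of Definition~\ref{defn:RR} by eliminating the realisation predicate in favour of the refutation predicate via $(\mathrm{RR}2_{\beta})$. The guiding observation is that $(\mathrm{RR}2_{\beta})$ makes $a\T_{\beta}\gn{C}$ depend on the code $\gn{C}$ only through its set of $\F_{\beta}$-refuters: whenever $\gn{C_{1}},\gn{C_{2}}\in\pred{Sent}_{\mathrm{R}}^{<\beta}$ and $\mathsf{RR}_{<\gamma}$ proves $\forall c(c\F_{\beta}\gn{C_{1}}\leftrightarrow c\F_{\beta}\gn{C_{2}})$, it also proves $a\T_{\beta}\gn{C_{1}}\leftrightarrow a\T_{\beta}\gn{C_{2}}$. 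For each item the needed refuter identity is precisely an instance of one of $(\mathrm{RR}3_{\beta})$, $(\mathrm{RR}7_{\beta})$, $(\mathrm{RR}8_{\beta})$, $(\mathrm{RR}9_{\beta})$, $(\mathrm{RR}10_{\beta})$, supplemented in two cases by one elementary extra fact.

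Concretely, reasoning in $\mathsf{RR}_{<\gamma}$, I would first dispatch $(\mathrm{RR}9_{\beta})'$ and $(\mathrm{RR}10_{\beta})'$, which are the most direct: fixing $\delta<\beta$ and $\gn{A}\in\pred{Sent}_{\mathrm{R}}^{<\delta}$, the codes $\gn{\dot{b}\F_{\delta}\gn{A}}$ and $\gn{\dot{b}\in\lVert A\rVert}$ (respectively $\gn{\dot{b}\T_{\delta}\gn{A}}$ and $\gn{\dot{b}\in\lvert A\rvert}$) both lie in $\pred{Sent}_{\mathrm{R}}^{<\beta}$, so applying $(\mathrm{RR}2_{\beta})$ to both sides of the target equivalence and then using $(\mathrm{RR}9_{\beta})$ (respectively $(\mathrm{RR}10_{\beta})$) under the quantifier identifies the two $\F_{\beta}$-conditions and finishes the case. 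For $(\mathrm{RR}7_{\beta})'$ and $(\mathrm{RR}8_{\beta})'$ I would run the same move and then unfold one layer: $(\mathrm{RR}2_{\beta})$ turns $a\T_{\beta}\gn{\dot{b}\F_{\alpha}\gn{A}}$ into $\forall c(c\F_{\beta}\gn{\dot{b}\F_{\alpha}\gn{A}}\to\langle a,c\rangle\in\pole)$, rewriting the antecedent with $(\mathrm{RR}7_{\beta})$ yields $\forall c(c\in\lVert b\F_{\alpha}\gn{A}\rVert\to\langle a,c\rangle\in\pole)$, and this is, by the last clause of Definition~\ref{defn:emplicit_realisability} (after renaming the bound variable), literally $a\in\lvert b\F_{\alpha}\gn{A}\rvert$; the case of $(\mathrm{RR}8_{\beta})'$ is identical with $\T_{\alpha}$ and $\lvert\cdot\rvert$ in place of $\F_{\alpha}$ and $\lVert\cdot\rVert$. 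Finally, for $(\mathrm{RR}3_{\beta})'$ I would observe that, under $\pred{Eq}(\gn{s},\gn{t})$, the axiom $(\mathrm{RR}3_{\beta})$ together with its instance for the pair $\gn{t},\gn{s}$ — admissible because $\pred{Eq}$ is provably symmetric in $\mathsf{PA}$ — gives $c\F_{\beta}\gn{A(s)}\leftrightarrow c\F_{\beta}\gn{A(t)}$ for every $c$, and then $(\mathrm{RR}2_{\beta})$ yields the full equivalence $a\T_{\beta}\gn{A(s)}\leftrightarrow a\T_{\beta}\gn{A(t)}$, of which the stated implication is a special case.

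None of this is mathematically hard; the only real obstacle will be the ramified syntactic bookkeeping that licenses each use of $(\mathrm{RR}2_{\beta})$, namely verifying that the codes involved actually fall under $\pred{Sent}_{\mathrm{R}}^{<\beta}$. For $\gn{\dot{b}\F_{\alpha}\gn{A}}$, $\gn{\dot{b}\T_{\alpha}\gn{A}}$, $\gn{\dot{b}\F_{\delta}\gn{A}}$ and $\gn{\dot{b}\T_{\delta}\gn{A}}$ this is immediate from $\alpha<\beta$, respectively $\delta<\beta$; for $\gn{A(t)}$ it holds because $\pred{Eq}(\gn{s},\gn{t})$ already forces $\gn{t}$ to code a closed $\mathcal{L}$-term; and for $\gn{\dot{b}\in\lVert A\rVert}$ and $\gn{\dot{b}\in\lvert A\rvert}$ it follows from the fact — read off the clauses of Definition~\ref{defn:emplicit_realisability}, and primitive recursively verifiable — that $\lVert A\rVert$ and $\lvert A\rvert$ are $\Lr^{<\delta}$-formulas whenever $A$ is an $\Lr^{<\delta}$-sentence, together with the defining property of the function symbols $x\dot{\in}\lVert y\rVert$ and $x\dot{\in}\lvert y\rvert$. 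All these checks are formalisable in $\mathsf{PA}$, so the whole derivation goes through within $\mathsf{RR}_{<\gamma}$.
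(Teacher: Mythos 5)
Your proposal is correct and matches the paper's own argument: for the representative case $(\mathrm{RR}7_{\beta})'$ the paper uses exactly your chain — unfold $a\T_{\beta}$ via $(\mathrm{RR}2_{\beta})$, rewrite the refuter condition via $(\mathrm{RR}7_{\beta})$, and recognise the result as $a\in\lvert b\F_{\alpha}\gn{A}\rvert$ by the last clause of Definition~\ref{defn:emplicit_realisability} — and dispatches the remaining items as ``similarly proved'', which your treatment of $(\mathrm{RR}3_{\beta})'$, $(\mathrm{RR}9_{\beta})'$ and $(\mathrm{RR}10_{\beta})'$ fills in along the intended lines. Your extra care with the $\pred{Sent}_{\mathrm{R}}^{<\beta}$ side-conditions (including that $\lVert A\rVert$ and $\lvert A\rvert$ stay within the same ramification level) is consistent with the remark the paper makes when setting up its model construction.
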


\begin{proof}
We consider only $(\mathrm{RR}7_{\beta})'$, as the other cases are similarly proved.
Thus, taking any $\ulcorner A \urcorner \in \pred{Sent}_{\mathrm{R}}^{<\delta}$, we derive $a\T_{\beta} \ulcorner \dot{b} \F_{\alpha} \ulcorner A \urcorner \urcorner \leftrightarrow a \in \lvert b\F_{\alpha} \ulcorner A \urcorner \rvert$ as follows:
\begin{align}
a\T_{\beta} \ulcorner \dot{b} \F_{\alpha} \ulcorner A \urcorner \urcorner &\Leftrightarrow \forall c (c \F_{\beta} \ulcorner \dot{b} \F_{\alpha} \ulcorner A \urcorner \urcorner \to \langle a,c \rangle \in \pole) &&\text{by $\mathrm{RR}2_{\beta}$} \notag \\
&\Leftrightarrow \forall c (c \in \lVert b \F_{\alpha} \ulcorner A \urcorner \rVert \to \langle a,c \rangle \in \pole) &&\text{by $\mathrm{RR}7_{\beta}$} \notag \\
&\Leftrightarrow a \in \lvert b\F_{\alpha} \ulcorner A \urcorner \rvert &&\text{by Definition~\ref{defn:emplicit_realisability}} \notag
\end{align}
\qed
\end{proof}





We first generalise the model theory for $\mathsf{CR}$ to for ramified theories.
Given an ordinal $\gamma$ and a pole $\pole$, the falsification relation $x \in \lVert A \rVert_{\pole}^{< \gamma}$ and the realisation relation $x \in \lvert A \rvert_{\pole}^{< \gamma}$ for $\mathcal{L}_{\mathrm{R}}^{< \gamma}$ are defined  inductively:
the vocabularies of $\mathcal{L}$ are interpreted exactly in the same way as in Definition~\ref{defn:classical_number_realisability};
the falsity condition for the predicates $x \in \pole$, $x \F_{\beta} y$, and $x \T_{\beta} y$ is given as follows.
\begin{align}
n \in \lVert m \in \pole \rVert_{\pole}^{< \gamma} \ &\Leftrightarrow \ \text{if} \ m \in \pole \ \text{then} \ n \in \pole \notag \\
n \in \lVert m \F_{\beta} l \rVert_{\pole}^{< \gamma} \ &\Leftrightarrow \ l = \ulcorner A \urcorner \ \& \ n \in \lVert m \in \lVert A \rVert \rVert_{\pole}^{< \gamma} \text{ for some $A \in \pred{Sent}_{\mathrm{R}}^{< \beta}$},  \notag \\
n \in \lVert m \T_{\beta} l \rVert_{\pole}^{< \gamma} \ &\Leftrightarrow \ l = \ulcorner A \urcorner \ \& \ n \in \lVert m \in \lvert A \rvert \rVert_{\pole}^{< \gamma} \text{ for some $A \in \pred{Sent}_{\mathrm{R}}^{< \beta}$}.  \notag
\end{align}

Here, we note that if $A \in \pred{Sent_R^{< \beta}}$, then so are $m \in \lVert A \rVert$ and $m \in \lvert A \rvert$.
Thus, the above definition is indeed an inductive definition. 

The interpretation of the predicates $x \F_{\beta} y$ and $x \T_{\beta} y$ is then given for each $\beta < \gamma$:
\begin{align}
\mathbb{F}_{\pole}^{\beta} &= \{ (n, \ulcorner A \urcorner) \in \mathbb{N}^2 \ | \ A \in \pred{Sent}_{\mathrm{R}}^{< \beta} \ \& \ n \in \lVert A \rVert_{\pole}^{< \gamma} \}, \notag \\
\mathbb{T}_{\pole}^{\beta} &= \{ (n, \ulcorner A \urcorner) \in \mathbb{N}^2 \ | \ A \in \pred{Sent}_{\mathrm{R}}^{< \beta} \ \& \ n \in \lvert A \rvert_{\pole}^{< \gamma} \}. \notag
\end{align}

The $\Lr^{< \gamma}$-model $\langle \mathbb{N}, \pole, \{ \mathbb{F}_{\pole}^{\beta} \}_{\beta < \gamma}, \{ \mathbb{T}_{\pole}^{\beta} \}_{\beta < \gamma} \rangle$ is thus obtained.
For simplicity, we will write $\langle \pole, < \gamma \rangle \models A$ instead of $\langle \mathbb{N}, \pole, \{ \mathbb{F}_{\pole}^{\beta} \}_{\beta < \gamma}, \{ \mathbb{T}_{\pole}^{\beta} \}_{\beta < \gamma} \rangle \models A$.

The next lemma shows that 
for every $A \in \pred{Sent}_{\mathrm{R}}^{< \beta}$,
formal falsification $s \F_{\beta} \ulcorner A \urcorner$ and
explicit falsification $x \in \lVert A \rVert$ are equivalent in the model;
the same applies to formal realisation $x \T_{\beta} \ulcorner A \urcorner$ and
explicit realisation $x \in \lvert A \rvert$.

\begin{lemma}\label{lem:explicit-realisability-in-model}
Take any $\beta < \gamma$.
Then, the following hold for any $\Lr^{< \beta}$-sentence $A$ and any number $a$.
\begin{enumerate}
\item $\langle \pole, < \gamma \rangle \models a \F_{\beta} \ulcorner A \urcorner$ holds if and only if $\langle \pole, < \gamma \rangle \models a \in \lVert A \rVert$.
\item $\langle \pole, < \gamma \rangle \models a \T_{\beta} \ulcorner A \urcorner$ holds if and only if $\langle \pole, < \gamma \rangle \models a \in \lvert A \rvert$.
\end{enumerate}
\end{lemma}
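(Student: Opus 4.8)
The plan is to reduce the statement to a single claim relating the syntactically defined formulas $x \in \lVert A \rVert$ and $x \in \lvert A \rvert$ of Definition~\ref{defn:emplicit_realisability} to the semantic sets $\lVert A \rVert_{\pole}^{<\gamma}$ and $\lvert A \rvert_{\pole}^{<\gamma}$, and then to prove that claim by induction on the logical complexity of $A$. First note that, directly from the definitions of the interpretations $\mathbb{F}_{\pole}^{\beta}$ and $\mathbb{T}_{\pole}^{\beta}$, for every $\Lr^{<\beta}$-sentence $A$ and number $a$ one has $\langle \pole, < \gamma \rangle \models a \F_{\beta} \ulcorner A \urcorner$ iff $a \in \lVert A \rVert_{\pole}^{<\gamma}$, and $\langle \pole, < \gamma \rangle \models a \T_{\beta} \ulcorner A \urcorner$ iff $a \in \lvert A \rvert_{\pole}^{<\gamma}$, the side conditions $A \in \pred{Sent}_{\mathrm{R}}^{<\beta}$ being met by hypothesis. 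Hence both parts of the lemma follow from the claim
\[
(\ast)\qquad \langle \pole, < \gamma \rangle \models a \in \lVert A \rVert \iff a \in \lVert A \rVert_{\pole}^{<\gamma} \qquad\text{and}\qquad \langle \pole, < \gamma \rangle \models a \in \lvert A \rvert \iff a \in \lvert A \rvert_{\pole}^{<\gamma},
\]
now asserted for all $\Lr^{<\gamma}$-sentences $A$ and all numbers $a$. Moreover, the second equivalence in $(\ast)$ follows from the first for the same $A$, because $x \in \lvert A \rvert$ is literally the formula $\forall m(m \in \lVert A \rVert \to \langle x, m \rangle \in \pole)$ while $\lvert A \rvert_{\pole}^{<\gamma}$ is obtained from $\lVert A \rVert_{\pole}^{<\gamma}$ by the matching clause. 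So it suffices to establish the first equivalence in $(\ast)$, by induction on the number of connectives and quantifiers of $A$.

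The induction proceeds by cases on the shape of $A$. If $A$ is an atomic $\mathcal{L}_{\pole}$-sentence, both sides unwind to ``if $A$ is true in $\mathbb{N}$ (respectively, if the relevant number lies in $\pole$) then $a \in \pole$'', via the equation clause of Definition~\ref{defn:classical_number_realisability} and the $\in\pole$-clause of the model definition. If $A$ has the form $t \F_{\delta} u$ with $\delta < \gamma$, then no appeal to the induction hypothesis is needed: using the interpretation of $\F_{\delta}$, the $\mathsf{PA}$-provable --- hence $\mathbb{N}$-valid --- defining equation for the symbol $\dot{\in}\lVert\cdot\rVert$, and the stipulation that this function returns no sentence code unless its argument is one, the assertion $\langle \pole, < \gamma \rangle \models a \in \lVert t \F_{\delta} u \rVert$ unfolds to ``$u$ codes some $B \in \pred{Sent}_{\mathrm{R}}^{<\delta}$, and $a \in \lVert\, \dot{t} \in \lVert B \rVert\, \rVert_{\pole}^{<\gamma}$'', which is exactly the condition the relevant clause of the model definition imposes on $a \in \lVert t \F_{\delta} u \rVert_{\pole}^{<\gamma}$; here one uses that the formula $\dot{t} \in \lVert B \rVert$ lies in $\pred{Sent}_{\mathrm{R}}^{<\delta}$ precisely when $B$ does, since the refutation and realisation predicates occurring in $\lVert B \rVert$ are exactly those occurring in $B$. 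The case $A = t \T_{\delta} u$ is identical, with $\lvert \cdot \rvert$ in place of $\lVert \cdot \rVert$. Finally, for $A = B \to C$ and $A = \forall x B$ the clauses of Definition~\ref{defn:emplicit_realisability} and the $\to$- and $\forall$-clauses of the model definition match up directly, applying the induction hypothesis to $B$ (and, through the reduction of $\lvert \cdot \rvert$ to $\lVert \cdot \rVert$ noted above, to $\lvert B \rvert$), to $C$, and to the substitution instance $B((a)_0)$, all of which have strictly smaller complexity than $A$.

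The step that requires the most care is the atomic $\F_{\delta}/\T_{\delta}$ case: one must check that the side condition appearing on the model side --- that the value of $t \dot{\in} \lVert u \rVert$ be a code of an $\Lr^{<\delta}$-sentence --- coincides with the side condition ``$u$ codes an $\Lr^{<\delta}$-sentence'' on the semantic side, and that the $\mathsf{PA}$-provable identities governing the function symbols $\dot{\in}\lVert\cdot\rVert$ and $\dot{\in}\lvert\cdot\rvert$ carry over to $\mathbb{N}$ so that the inner falsification conditions agree verbatim. Everything else is routine unwinding of the two definitions, treating closed terms through their values exactly as in Definition~\ref{defn:classical_number_realisability}.
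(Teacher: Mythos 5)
Your proof is correct and follows essentially the same route as the paper: part 2 is reduced to part 1 via the definition of $\lvert\cdot\rvert$ from $\lVert\cdot\rVert$, and part 1 is established by induction on the complexity of $A$, with the $\F_\delta$/$\T_\delta$ atomic cases handled by direct unwinding of the matching clauses of Definition~\ref{defn:emplicit_realisability} and the model construction rather than by the induction hypothesis. The only difference is cosmetic: you factor the argument through the set-level equivalence $(\ast)$ with $\lVert A\rVert_{\pole}^{<\gamma}$, whereas the paper phrases the same chain of equivalences in terms of satisfaction of $a\F_{\beta}\ulcorner A\urcorner$.
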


\begin{proof}
Firstly, Item~2 can be proved by Item~1:
\begin{align}
& \langle \pole, < \gamma \rangle \models a \T_{\beta} \ulcorner A \urcorner \notag \\
&\Leftrightarrow \ a \in \lvert A \rvert_{\pole}^{< \gamma} \notag \\ 
&\Leftrightarrow b \in \lVert A \rVert_{\pole}^{< \gamma} \text{ implies } \langle a,b \rangle \in \pole \text{ for any $b \in \mathbb{N}$} \notag \\
&\Leftrightarrow \langle \pole, < \gamma \rangle \models b \F_{\beta} \ulcorner A \urcorner \text{ implies } \langle a,b \rangle \in \pole \text{ for any $b \in \mathbb{N}$} \notag \\
&\Leftrightarrow \langle \pole, < \gamma \rangle \models b \in \lVert A \rVert \text{ implies } \langle a,b \rangle \in \pole \text{ for any $b \in \mathbb{N}$} &&\text{by Item~1} \notag \\
&\Leftrightarrow \langle \pole, < \gamma \rangle \models \forall b (b \in \lVert A \rVert \to \langle a,b \rangle \in \pole) \notag \\
&\Leftrightarrow \langle \pole, < \gamma \rangle \models a \in \lvert A \rvert \notag
\end{align}

Next, we prove Item~1 by induction on the logical complexity of $A$.
We divide the cases by the form of $A$:
\begin{description}
\item[($A = P$)] Assume that $A$ is of the form $P$  for some $\mathcal{L}_{\pole}$-atomic predicate $P$.
Then, 
\begin{align}
\langle \pole, < \gamma \rangle \models a \F_{\beta} \ulcorner P \urcorner
&\Leftrightarrow a \in \lVert P \rVert_{\pole}^{< \gamma} \notag \\
&\Leftrightarrow \text{if} \ P \ \text{then} \ a \in \pole \notag \\
&\Leftrightarrow \langle \pole, < \gamma \rangle \models P \to a \in \pole \notag \\
&\Leftrightarrow \langle \pole, < \gamma \rangle \models a \in \lVert P \rVert \notag
\end{align}

\item[($A = s \F_{\alpha} t$)] Assume that $A$ is of the form $s \F_{\alpha} t$ for some $\alpha < \beta$. Then,
\begin{align}
& \langle \pole, < \gamma \rangle \models a \F_{\beta} \ulcorner s \F_{\alpha} t \urcorner \notag \\ 
&\Leftrightarrow a \in \lVert s \F_{\alpha} t \rVert_{\pole}^{< \gamma} \notag \\ 
&\Leftrightarrow a \in \lVert s \in \lVert B \rVert \rVert_{\pole}^{< \gamma} \ \text{for some $\ulcorner B \urcorner = t \in \pred{Sent}_{\mathrm{R}}^{< \alpha}$} \notag \\
&\Leftrightarrow \langle \pole, < \gamma \rangle \models a \F_{\alpha} (s \dot{\in} \lVert t \rVert) \notag \\
&\Leftrightarrow \langle \pole, < \gamma \rangle \models a \in \lVert s \F_{\alpha} t \rVert \notag 
\end{align}

\item[($A = s \T_{\alpha} t$)] Samely as above. 

\item[(Inductive step)] When $A$ is a complex sentence,  the claim immediately follows by the induction hypothesis. 
\end{description}
\qed
\end{proof}

Consequently, soundness of $\mathsf{RR}_{< \gamma}$
with respect to the model $\langle \pole, < \gamma \rangle$ is obtained for any pole.
\begin{proposition}\label{prop:soundness_RR}
	Let $A$ be any $\mathcal{L}_R^{< \gamma}$-sentence and assume that $\mathsf{RR}_{< \gamma}^+ \vdash A$.
Then, $\langle \pole, < \gamma \rangle \models A$ for any pole $\pole$.
\end{proposition}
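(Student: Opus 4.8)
The plan is to mimic the proofs of Propositions~\ref{prop:soundness_CR} and~\ref{prop:soundness_CR+}, proceeding by induction on the length of the derivation of $A$ in $\mathsf{RR}_{<\gamma}^{+}$. The base case, in which $A$ is an axiom of $\mathsf{PA}$ over $\Lr^{<\gamma}$, is immediate because the first-order part of $\langle \pole, < \gamma \rangle$ is the standard model $\mathbb{N}$, and the logical rules are sound as usual. So the real work consists in checking that each of $(\mathrm{RR}1_{\beta})$--$(\mathrm{RR}10_{\beta})$ holds in $\langle \pole, < \gamma \rangle$ for every pole $\pole$ and every $\beta < \gamma$, and in treating the reflection rule.

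For the axioms, most cases simply read off the defining clauses of the model. $(\mathrm{RR}1_{\beta})$ holds because $\pole$ is a pole (Definition~\ref{defn:pole}). $(\mathrm{RR}2_{\beta})$ holds because $\mathbb{T}_{\pole}^{\beta}$ is defined from $\mathbb{F}_{\pole}^{\beta}$ by exactly the clause $n \in \lvert B \rvert_{\pole}^{<\gamma} \Leftrightarrow \forall m(m \in \lVert B \rVert_{\pole}^{<\gamma} \to \langle n, m \rangle \in \pole)$. $(\mathrm{RR}3_{\beta})$, term invariance, follows from an easy induction on the build-up of $A$ showing that $\lVert A \rVert_{\pole}^{<\gamma}$ depends only on the values of the closed terms occurring in $A$ (the equation clause being evaluated in $\mathbb{N}$; compare Proposition~\ref{term-regular}). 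The clauses $(\mathrm{RR}4_{\beta})$, $(\mathrm{RR}5_{\beta})$, $(\mathrm{RR}6_{\beta})$ match, respectively, the falsity clauses for atomic $\mathcal{L}_{\pole}$-predicates, for implication and for the universal quantifier in the definition of $\lVert \cdot \rVert_{\pole}^{<\gamma}$. Finally, $(\mathrm{RR}7_{\beta})$--$(\mathrm{RR}10_{\beta})$ are obtained from Lemma~\ref{lem:explicit-realisability-in-model}: for instance, for $(\mathrm{RR}7_{\beta})$, fixing $a, b \in \mathbb{N}$ and $\ulcorner A \urcorner \in \pred{Sent}_{\mathrm{R}}^{<\alpha}$, the sentence $\num{b} \F_{\alpha} \ulcorner A \urcorner$ is an $\Lr^{<\beta}$-sentence since $\alpha < \beta$, so the first item of Lemma~\ref{lem:explicit-realisability-in-model} gives $\langle \pole, < \gamma \rangle \models a \F_{\beta} \ulcorner \num{b} \F_{\alpha} \ulcorner A \urcorner \urcorner$ iff $\langle \pole, < \gamma \rangle \models a \in \lVert \num{b} \F_{\alpha} \ulcorner A \urcorner \rVert$; the cases $(\mathrm{RR}8_{\beta})$--$(\mathrm{RR}10_{\beta})$ are analogous, using in addition the defining property of the function symbols $\dot{\in}\lVert \cdot \rVert$ and $\dot{\in}\lvert \cdot \rvert$ to match the relevant codes.

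It remains to handle the reflection rule, and this is the only genuinely non-routine step. Suppose $A \in \mathcal{L}$ is inferred from $t \T_{\beta} \ulcorner A \urcorner$ with $t$ closed and $\beta < \gamma$. By the induction hypothesis $\langle \pole, < \gamma \rangle \models t \T_{\beta} \ulcorner A \urcorner$ for every pole $\pole$, and in particular for $\pole = \emptyset$. For the empty pole one has $\lvert B \rvert_{\emptyset}^{<\gamma} = \{ n : \lVert B \rVert_{\emptyset}^{<\gamma} = \emptyset \}$ for every $\mathcal{L}$-sentence $B$, and a straightforward induction on $B$ shows $\lVert B \rVert_{\emptyset}^{<\gamma} = \emptyset$ iff $\mathbb{N} \models B$ --- this is the internalisation of the observation from the Introduction that a realiser over the empty pole forces truth in $\mathbb{N}$. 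Hence $\langle \emptyset, < \gamma \rangle \models t \T_{\beta} \ulcorner A \urcorner$ forces $\mathbb{N} \models A$, and since $A \in \mathcal{L}$ we conclude $\langle \pole, < \gamma \rangle \models A$ for every pole $\pole$, as required. (Alternatively: $\langle \emptyset, < \gamma \rangle$ is a model of $\mathsf{RR}_{<\gamma}^{\emptyset}$, and one may invoke the ramified analogues of Lemmas~\ref{lem:CR_with_empty} and~\ref{lem:reflection_empty_CR}.)

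The main obstacle is thus concentrated in the reflection-rule case, namely the empty-pole computation above; the axiom verifications are routine bookkeeping on the G\"odel coding that has already been organised by Lemma~\ref{lem:explicit-realisability-in-model}. (One minor point worth flagging, though already settled before the proposition, is that the clauses defining $\lVert \cdot \rVert_{\pole}^{<\gamma}$ constitute a legitimate inductive definition because $A \in \pred{Sent}_{\mathrm{R}}^{<\beta}$ implies $m \in \lVert A \rVert,\ m \in \lvert A \rvert \in \pred{Sent}_{\mathrm{R}}^{<\beta}$.)
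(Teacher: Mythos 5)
Your proposal is correct and follows essentially the same route as the paper: induction on the derivation, checking $(\mathrm{RR}1_{\beta})$--$(\mathrm{RR}6_{\beta})$ directly against the model's defining clauses as in Proposition~\ref{prop:soundness_CR+}, discharging $(\mathrm{RR}7_{\beta})$--$(\mathrm{RR}10_{\beta})$ via Lemma~\ref{lem:explicit-realisability-in-model} (the paper spells out the equivalence chain for $(\mathrm{RR}9_{\beta})$ that you summarise as ``analogous''), and handling the reflection rule by specialising to the empty pole, where realisability collapses to truth in $\mathbb{N}$ for $\mathcal{L}$-sentences.
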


\begin{proof}
The proof is by induction on the derivation of $A$.
If $A$ is an axiom of $\mathsf{PA}$ or one of the axioms $\mathrm{RR}1_{\beta}$ to $\mathrm{RR}6_{\beta}$, then the proof is almost the same as for Proposition~\ref{prop:soundness_CR+}.
The case of $\mathrm{RR}7_{\beta}$ or $\mathrm{RR}8_{\beta}$ is already dealt with in Lemma~\ref{lem:explicit-realisability-in-model}. 
 
As for $\mathrm{RR}9_{\beta}$, we take any $\delta < \beta$ and $\ulcorner A \urcorner \in \pred{Sent}_{\mathrm{R}}^{< \delta}$. 
Then, satisfaction of $a\F_{\beta} \ulcorner \dot{b} \F_{\delta} \ulcorner A \urcorner \urcorner \leftrightarrow a\F_{\beta} \ulcorner \dot{b} \in \lVert A \rVert \urcorner$ is proved as follows:
\begin{align}
\langle \pole, < \gamma \rangle \models a\F_{\beta} \ulcorner \dot{b} \F_{\delta} \ulcorner A \urcorner \urcorner &\Leftrightarrow \langle \pole, < \gamma \rangle \models a \in \lVert b \F_{\delta} \ulcorner A \urcorner \rVert &&\text{by Lemma~\ref{lem:explicit-realisability-in-model}} \notag \\
&\Leftrightarrow \langle \pole, < \gamma \rangle \models a \F_{\delta} \ulcorner \dot{b} \in \lVert A \rVert \urcorner &&\text{by Definition~\ref{defn:emplicit_realisability}} \notag \\
&\Leftrightarrow \langle \pole, < \gamma \rangle \models a \in \lVert b \in \lVert A \rVert \rVert &&\text{by Lemma~\ref{lem:explicit-realisability-in-model}} \notag \\
&\Leftrightarrow \langle \pole, < \gamma \rangle \models a \F_{\beta} \ulcorner \dot{b} \in \lVert A \rVert \urcorner &&\text{by Lemma~\ref{lem:explicit-realisability-in-model}} \notag
\end{align} 
We can similarly prove the case of $\mathrm{RR}10_{\beta}$.
The inductive step is again the same as for Proposition~\ref{prop:soundness_CR+}. \qed
\end{proof}

\subsection{Proof-Theoretic Strength of Ramified Realisability}

This subsection is devoted to proof-theoretic studies of $\mathsf{RR}_{< \gamma}$ and its variants.
First of all, the conservativity proof of $\mathsf{CR}$ (Proposition~\ref{prop:conservativity_CR}) is easily generalised to that of $\mathsf{RR}_{ < \gamma}$.

\begin{lemma}\label{lem:conservativity_RR}
$\mathsf{RR}_{<\gamma}$ is conservative over $\mathsf{PA}$ for every ordinal $\gamma$.
\end{lemma}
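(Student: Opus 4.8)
The plan is to mimic, essentially verbatim, the conservativity argument for $\mathsf{CR}$ given in Proposition~\ref{prop:conservativity_CR}, lifting the single relativisation used there to a transfinite stratification indexed by $\beta < \gamma$. Concretely, I would define a translation $\mathcal{T} \colon \Lr^{<\gamma} \to \mathcal{L}$ that fixes every $\mathcal{L}$-symbol, leaves the $\mathcal{L}$-atomic formulas untouched, sends each of the new atomic formulas $x \in \pole$, $x \F_\beta y$, and $x \T_\beta y$ to the trivially true formula $0=0$, and commutes with the logical connectives and quantifiers. Since $\mathcal{T}(A) = A$ for every $A \in \mathcal{L}$, it suffices to show $\mathsf{RR}_{<\gamma} \vdash A$ implies $\mathsf{PA} \vdash \mathcal{T}(A)$, which follows by induction on derivations once we check that $\mathcal{T}$ maps each axiom of $\mathsf{RR}_{<\gamma}$ to a $\mathsf{PA}$-theorem.

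The key observation making the routine verification go through is that under $\mathcal{T}$ both sides of every biconditional axiom collapse. For $(\mathrm{RR}2_\beta)$, the left side becomes $0=0$ and the right side becomes $\forall b((0=0) \to (0=0))$, which $\mathsf{PA}$ proves; similarly $(\mathrm{RR}1_\beta)$, $(\mathrm{RR}3_\beta)$, $(\mathrm{RR}4_\beta)$, $(\mathrm{RR}5_\beta)$, $(\mathrm{RR}6_\beta)$ all translate to tautologies of pure logic over $\mathsf{PA}$. The one point that needs a sentence of care is the ``reflection'' axioms $(\mathrm{RR}7_\beta)$–$(\mathrm{RR}10_\beta)$, which on the right-hand side mention the explicit-falsification/realisation formulas $a \in \lVert A \rVert$ and $a \in \lvert A \rvert$ rather than a bare atomic formula. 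Here I would note (by a trivial induction on the inductive clauses of Definition~\ref{defn:emplicit_realisability}, or by directly inspecting them) that $\mathcal{T}$ applied to $s \in \lVert A \rVert$ and $s \in \lvert A \rvert$ yields a formula that $\mathsf{PA}$ proves equivalent to $0=0$ — indeed each clause either produces an $\mathcal{L}_\pole$-atomic formula (translated to $0=0$), an $\F_\beta$/$\T_\beta$-atomic formula (translated to $0=0$), or a Boolean/quantifier combination of such, so the whole thing translates to a propositionally-true formula. Consequently $(\mathrm{RR}7_\beta)$–$(\mathrm{RR}10_\beta)$ also translate to $\mathsf{PA}$-provable biconditionals.

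The induction on derivations is then immediate: logical axioms and rules are preserved by any substitution-style translation commuting with connectives and quantifiers, $\mathsf{PA}$-axioms of the expanded language translate to $\mathsf{PA}$-axioms (the induction schema for $\Lr^{<\gamma}$-formulas translates to the induction schema for $\mathcal{L}$-formulas, since $\mathcal{T}$ of an $\Lr^{<\gamma}$-formula is an $\mathcal{L}$-formula), and we have just checked the non-logical axioms. I do not expect a genuine obstacle; the only mildly delicate point is being explicit that $\mathcal{T}$ of the primitive recursive function symbols $x \dot{\in} \lvert y \rvert$ and $x \dot{\in} \lVert y \rVert$ causes no trouble — but these are $\mathcal{L}$-function symbols, hence fixed by $\mathcal{T}$, and they occur in the axioms only inside the scope of an $\F_\beta$ predicate or an explicit-realisation formula, both of which translate away. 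So the proof is: define $\mathcal{T}$ as above; observe by inspection of Definition~\ref{defn:emplicit_realisability} that $\mathcal{T}(s \in \lVert A \rVert)$ and $\mathcal{T}(s \in \lvert A \rvert)$ are $\mathsf{PA}$-provably equivalent to $0=0$; check each axiom of $\mathsf{RR}_{<\gamma}$ translates to a $\mathsf{PA}$-theorem; conclude by induction on derivations that $\mathsf{RR}_{<\gamma} \vdash A$ implies $\mathsf{PA} \vdash \mathcal{T}(A)$, whence conservativity over $\mathsf{PA}$.
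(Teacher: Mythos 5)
Your proposal is correct and follows essentially the same route as the paper: the paper's proof of Lemma~\ref{lem:conservativity_RR} defines exactly this translation (all of $s \in \pole$, $s \F_\beta t$, $s \T_\beta t$ mapped to $0=0$, commuting with the logical symbols) and reduces to Proposition~\ref{prop:conservativity_CR}. Your extra observation that $\mathcal{T}(s \in \lVert A \rVert)$ and $\mathcal{T}(s \in \lvert A \rvert)$ collapse to $\mathsf{PA}$-provably true formulas is the right detail to make the verification of $(\mathrm{RR}7_\beta)$--$(\mathrm{RR}10_\beta)$ explicit, which the paper leaves implicit.
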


\begin{proof}
Similarly to the proof of Proposition~\ref{prop:conservativity_CR}, we define a translation $\mathcal{T}: \Lr^{< \gamma} \to \mathcal{L}$:
\begin{itemize}
\item $\mathcal{T}(s=t) \ = \ (s=t)$;
\item $\mathcal{T}(s \in \pole) \ = \ \mathcal{T}(s\F_{\beta}t) \ = \ \mathcal{T}(s\T_{\beta}t) \ = \ (0=0)$ \ for $\beta < \gamma$;
\item $\mathcal{T}$ commutes with the logical symbols.
\end{itemize}
Then, it is easy to prove that $\mathsf{RR}_{<\gamma} \vdash A$ implies $\mathsf{PA} \vdash \mathcal{T}(A)$. \qed
\end{proof}


Next, we determine the proof-theoretic upper bound of $\mathsf{RR}_{< \gamma}^{\emptyset}$.
As with $\mathsf{CR}^{\emptyset}$, it is not so difficult to formalise the model $\langle \emptyset, < \gamma\rangle$ of $\mathsf{RR}_{< \gamma}^{\emptyset}$ in the system $\mathsf{RA}_{< \gamma}$ of ramified analysis (cf.~\cite{feferman1964systems}), which is known to be proof-theoretically equivalent to $\mathsf{RT}_{< \gamma}$.
But here, we give a more direct interpretation of $\mathsf{RR}_{< \gamma}^{\emptyset}$ into $\mathsf{RT}_{< \gamma}$, based on \cite{hayashi2025friedman}.

Similarly to \cite[Proposition~11]{hayashi2025friedman}, we define a translation $\mathcal{T}_{\emptyset} : \Lr^{< \gamma} \to \Lt^{< \gamma}$ as follows:
\begin{itemize}
\item $\mathcal{T}_{\emptyset}(s \in \pole) \ = \ \bot$;

\item $\mathcal{T}_{\emptyset}(s = t) \ = \ (s=t)$;

\item $\mathcal{T}_{\emptyset}(s \F_{\beta} t) \ = \ \T_{\beta} (\tau_{\emptyset} (s \dot{\in} \lVert t \rVert))$ for $\beta < \gamma$;

\item $\mathcal{T}_{\emptyset}(s \T_{\beta} t) \ = \ \T_{\beta} (\tau_{\emptyset} (s \dot{\in} \lvert t \rvert))$ for $\beta < \gamma$;

\item $\mathcal{T}_{\emptyset}$ commutes with the logical symbols,
\end{itemize}
where $\tau_{\emptyset}$ is a primitive recursive representation of $\mathcal{T}_{\emptyset}$.
Thus, we have $\mathsf{PA} \vdash \tau_{\emptyset} (\ulcorner A \urcorner) = \ulcorner \mathcal{T}_{\emptyset}(A) \urcorner$ for each $A \in \Lr^{< \gamma}$.

\begin{lemma}\label{lem:upper-bound_RR-empty}
For each $\Lr^{< \gamma}$-formula $A$, if $ \mathsf{RR}_{<\gamma}^{\emptyset} \vdash A$, then $\mathsf{RT}_{< \gamma} \vdash \mathcal{T}_{\emptyset}(A)$.
\end{lemma}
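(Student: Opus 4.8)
The plan is to argue by induction on the length of an $\mathsf{RR}_{<\gamma}^{\emptyset}$-derivation of $A$, showing at each step that $\mathsf{RT}_{<\gamma}\vdash\mathcal{T}_{\emptyset}(A)$. Since $\mathcal{T}_{\emptyset}$ commutes with the logical symbols and with substitution and sends $\Lr^{<\gamma}$-formulas to $\Lt^{<\gamma}$-formulas, the image of every first-order logical axiom or rule is again one; the non-induction axioms of $\mathsf{PA}$ are $\mathcal{L}$-sentences, which $\mathcal{T}_{\emptyset}$ leaves fixed; and the image of an induction instance over $\Lr^{<\gamma}$ is an induction instance over $\Lt^{<\gamma}$, which $\mathsf{RT}_{<\gamma}$ proves. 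Finally $\mathcal{T}_{\emptyset}(\pole=\emptyset)$ is $\neg\exists x\,\bot$, trivially provable. Thus the whole argument reduces to verifying, for all $\alpha<\beta<\gamma$, that $\mathsf{RT}_{<\gamma}$ proves $\mathcal{T}_{\emptyset}$ applied to each of $(\mathrm{RR}1_{\beta})$ through $(\mathrm{RR}10_{\beta})$.

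For this I would first collect the bookkeeping facts, all provable in $\mathsf{PA}$: that $\tau_{\emptyset}$ is a homomorphism for the code-constructors $\deq$, $\dra$, $\dfa$ and for $\subt$ (up to renaming of bound variables), that it is type-correct in the sense that $\pred{Sent}_{\mathrm{R}}^{<\delta}(x)\to\pred{Sent}_{\mathrm{T}}^{<\delta}(\tau_{\emptyset}(x))$, and that the function symbols $x\,\dot{\in}\,\lVert y\rVert$ and $x\,\dot{\in}\,\lvert y\rvert$ evaluate exactly according to the clauses of Definition~\ref{defn:emplicit_realisability}. Together these give the identity that drives the argument: if $\gn B\in\pred{Sent}_{\mathrm{R}}^{<\beta}$ then, provably in $\mathsf{PA}$, $\mathcal{T}_{\emptyset}(a\F_{\beta}\gn B)$ equals $\T_{\beta}(\tau_{\emptyset}(a\,\dot{\in}\,\lVert\gn B\rVert))$, hence $\T_{\beta}\gn{\mathcal{T}_{\emptyset}(\num a\in\lVert B\rVert)}$, and likewise $\mathcal{T}_{\emptyset}(a\T_{\beta}\gn B)=\T_{\beta}\gn{\mathcal{T}_{\emptyset}(\num a\in\lvert B\rvert)}$, where moreover the codes $\gn{\mathcal{T}_{\emptyset}(\num a\in\lVert B\rVert)}$ and $\gn{\mathcal{T}_{\emptyset}(\num a\in\lvert B\rvert)}$ lie in $\pred{Sent}_{\mathrm{T}}^{<\beta}$, since neither $\lVert\cdot\rVert$ nor $\lvert\cdot\rvert$ raises the language level.

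With this identity in place the compositional and invariance axioms are short computations. $(\mathrm{RR}1_{\beta})$ translates to a formula of the shape $\theta\to(\bot\to\bot)$ with $\theta\in\mathcal{L}$, hence trivial. For $(\mathrm{RR}4_{\beta})$ and $(\mathrm{RR}5_{\beta})$ one unwinds the $\lVert\cdot\rVert$-clause for an atomic $\mathcal{L}_{\pole}$-formula and for $A\to B$ and pushes $\T_{\beta}$ through the resulting implication, respectively conjunction, by $(\mathrm{RT}2_{\beta})$ and $(\mathrm{RT}3_{\beta})$; for $(\mathrm{RR}2_{\beta})$ one unwinds $a\,\dot{\in}\,\lvert\gn A\rvert$ to the code of $\forall b(\mathcal{T}_{\emptyset}(\num b\in\lVert A\rVert)\to\bot)$ and applies $(\mathrm{RT}2_{\beta})$, $(\mathrm{RT}3_{\beta})$, $(\mathrm{RT}4_{\beta})$; $(\mathrm{RR}6_{\beta})$ is nearly immediate, its $\lVert\cdot\rVert$-clause introducing no connective, with at most $(\mathrm{RT}1_{\beta})$ invoked to reconcile the closed-term and numeral codes produced by the $\dot{}$-notation; and $(\mathrm{RR}3_{\beta})$ is an instance of the term-invariance axiom $(\mathrm{RT}1_{\beta})$, once one observes that $\mathcal{T}_{\emptyset}(\num a\in\lVert A(s)\rVert)$ is $C(s)$ for the $\Lt^{<\beta}$-formula $C:=\mathcal{T}_{\emptyset}(\num a\in\lVert A\rVert)$, whose distinguished variable still occurs only in term position because $\lVert\cdot\rVert$, $\subt$ and $\mathcal{T}_{\emptyset}$ all commute with term substitution.

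The remaining axioms $(\mathrm{RR}7_{\beta})$ through $(\mathrm{RR}10_{\beta})$ are the ones that invoke the iteration principles of $\mathsf{RT}_{<\gamma}$, and this is where I expect the real work to lie. Unwinding the $\lVert\cdot\rVert$-clauses for $t\F_{\alpha}u$ and $t\T_{\alpha}u$ shows that for $(\mathrm{RR}7_{\beta})$ and $(\mathrm{RR}8_{\beta})$ (with $\gn A\in\pred{Sent}_{\mathrm{R}}^{<\alpha}$, $\alpha<\beta$) the two sides of the translated equivalence are $\T_{\beta}\gn{\T_{\alpha}\gn E}$ and $\T_{\alpha}\gn E$ for one and the same sentence $E=\mathcal{T}_{\emptyset}(\num a\in\lVert\num b\in\lVert A\rVert\rVert)$, which lies in $\Lt^{<\alpha}$ because $\num b\in\lVert A\rVert$ and its own $\lVert\cdot\rVert$ stay in $\Lr^{<\alpha}$; hence the equivalence is essentially an instance of $(\mathrm{RT}5_{\beta})$. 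For $(\mathrm{RR}9_{\beta})$ and $(\mathrm{RR}10_{\beta})$ (with $\gn A\in\pred{Sent}_{\mathrm{R}}^{<\delta}$, $\delta<\beta$) the same unwinding leaves the outer $\T_{\beta}$ in place on both sides and the translated equivalence becomes $\T_{\beta}\gn{\T_{\delta}\gn E}\leftrightarrow\T_{\beta}\gn E$, essentially an instance of $(\mathrm{RT}6_{\beta})$. The main obstacle is precisely this layer of syntactic bookkeeping: one must arrange the $\mathsf{PA}$-provable identities relating $\tau_{\emptyset}$ with $x\,\dot{\in}\,\lVert y\rVert$ and $x\,\dot{\in}\,\lvert y\rvert$ precisely enough that the nested codes collapse, modulo $(\mathrm{RT}1_{\beta})$ used to absorb residual term-versus-numeral discrepancies, to the exact form to which $(\mathrm{RT}5_{\beta})$ or $(\mathrm{RT}6_{\beta})$ applies, and one must track which of the predicates $\pred{Sent}_{\mathrm{T}}^{<\delta}$ each intermediate code satisfies, which is exactly where the index hypotheses $\alpha,\delta<\beta<\gamma$ and $\gn A\in\pred{Sent}_{\mathrm{R}}^{<\alpha}$ enter. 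None of this is conceptually deep, but it is the part demanding care.
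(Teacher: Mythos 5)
Your proposal follows essentially the same route as the paper's proof: induction on the derivation, with the compositional axioms $(\mathrm{RR}1_{\beta})$--$(\mathrm{RR}6_{\beta})$ handled by unwinding $\mathcal{T}_{\emptyset}$ against the compositional axioms of $\mathsf{RT}_{<\gamma}$ (the paper defers these to the cited Friedman--Sheard analogue), and with $(\mathrm{RR}7_{\beta})$/$(\mathrm{RR}8_{\beta})$ and $(\mathrm{RR}9_{\beta})$/$(\mathrm{RR}10_{\beta})$ collapsing, after the same unfolding of $\lVert\cdot\rVert$ and the homomorphism properties of $\tau_{\emptyset}$, to instances of $(\mathrm{RT}5_{\beta})$ and $(\mathrm{RT}6_{\beta})$ respectively, exactly as in the paper. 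Your identification of the common inner sentence $E=\mathcal{T}_{\emptyset}(\num a\in\lVert\num b\in\lVert A\rVert\rVert)$ and the verification that it lies in $\pred{Sent}_{\mathrm{T}}^{<\alpha}$ is precisely the key observation the paper records.
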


\begin{proof}
By induction on the derivation of $A$.
Since the other cases are proved in the same way as the proof of \cite[Proposition~11]{hayashi2025friedman}, we shall only deal with the axioms $\mathrm{RR}7_{\beta}$ to $\mathrm{RR}10_{\beta}$.
\begin{description}
\item[$\mathrm{RR}7_{\beta}$] Assume $\alpha < \beta < \gamma$. The translation of $\mathrm{RR}7_{\beta}$ is equivalent to the following:
\[
\forall \ulcorner A \urcorner \in \pred{Sent}_{\mathrm{R}}^{<\alpha}. \ \T_{\beta} \ulcorner \T_{\alpha} \ulcorner \mathcal{T}_{\emptyset} (\dot{a} \in \lVert \dot{b} \in \lVert A \rVert \rVert ) \urcorner \urcorner \leftrightarrow \T_{\alpha} \ulcorner \mathcal{T}_{\emptyset} (\dot{a} \in \lVert \dot{b} \in \lVert A \rVert \rVert ) \urcorner.
\]
If $\ulcorner A \urcorner \in \pred{Sent}_{\mathrm{R}}^{<\alpha}$, then $\ulcorner \mathcal{T}_{\emptyset} (\dot{a} \in \lVert \dot{b} \in \lVert A \rVert \rVert ) \urcorner \in \pred{Sent}_{\mathrm{T}}^{<\alpha}$, which is verifiable in $\mathsf{PA}$.
Thus, the above formula is derivable by $\mathrm{RT}5_{\beta}$.

\item[$\mathrm{RR}8_{\beta}$] Same as above.

\item[$\mathrm{RR}9_{\beta}$] Assume $\beta < \gamma$. The translation of $\mathrm{RR}9_{\beta}$ is equivalent to the following:
\[
\forall \delta < \beta ( \forall \ulcorner A \urcorner \in \pred{Sent}_{\mathrm{R}}^{<\delta}. \ \T_{\beta} \ulcorner \T_{\delta} \ulcorner \mathcal{T}_{\emptyset} (\dot{a} \in \lVert \dot{b} \in \lVert A \rVert \rVert) \urcorner \urcorner \leftrightarrow \T_{\beta} \ulcorner \mathcal{T}_{\emptyset} (\dot{a} \in \lVert \dot{b} \in \lVert A \rVert \rVert) \urcorner ),
\]
which is derived by $\mathrm{RT}6_{\beta}$.

\item[$\mathrm{RR}10_{\beta}$] Same as above. 
\qed
\end{description}
\end{proof}

In the remainder of this subsection, we determine the lower bound of $\mathsf{RR}_{< \gamma}^{+}$ and $\mathsf{RR}_{< \gamma}^{\emptyset}$.
In \cite{hayashi2025friedman}, the lower bound of $\mathsf{FSR}$ with the reflection rule is obtained by proving \emph{explicit realisability} of $\mathsf{FSR}$,
which states that for each theorem $A$ of $\mathsf{FSR}^{\emptyset}$ (= $\mathsf{FSR} + \pole = \emptyset$), there exists a term $t$ such that $\mathsf{FSR} \vdash t \in \lvert A \rvert$. 

Thus, we first show explicit realisability of $\mathsf{RR}_{< \gamma}$.

\begin{lemma}\label{lem:explicit-realisability-RR}
For each $\Lr^{< \gamma}$-formula $A$, if $ \mathsf{RR}^{\emptyset}_{<\gamma} \vdash A$, then there exists a term $t$ such that $\mathsf{RR}_{< \gamma} \vdash t \in \lvert A \rvert$.

Moreover, this fact is formalisable in $\mathsf{RR}_{\gamma}$, i.e. there exists a term $s$ such that the following holds:
\[
\mathsf{RR}_{\gamma} \vdash \forall \ulcorner A \urcorner \in \pred{Sent}_{\mathrm{R}}^{< \gamma}. \ \pred{Bew}_{\mathsf{RR}^{\emptyset}_{< \gamma}}(a, \ulcorner A \urcorner) \to (s \cdot a) \T_{\gamma}\ulcorner A \urcorner,
\]
where $\pred{Bew}_{\mathsf{RR}^{\emptyset}_{< \gamma}}(x,y)$ is a canonical provability predicate for $\mathsf{RR}^{\emptyset}_{< \gamma}$ which means that $x$ is the code of a proof of $y$ in $\mathsf{RR}^{\emptyset}_{< \gamma}$.
\end{lemma}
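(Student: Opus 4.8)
The plan is to mirror the proof of Theorem~\ref{formal_realisability_PA}, replacing "formal realisation by a closed term" with "explicit realisation" throughout, and carrying the extra bookkeeping needed for the ramified falsification/realisation predicates and the new axioms $\mathrm{RR}7_\beta$--$\mathrm{RR}10_\beta$. The argument proceeds by induction on the length of a $\mathsf{RR}^{\emptyset}_{<\gamma}$-derivation of $A$. For the logical axioms and rules of first-order logic, and for the $\mathsf{PA}$-axioms formulated over $\Lr^{<\gamma}$, one reuses essentially verbatim the realisers constructed in Lemma~\ref{partial_compositionality}, Lemma~\ref{lem:continuation_constant} and the proof of Theorem~\ref{formal_realisability_PA} (Peirce's law, the induction schema), now read through the \emph{explicit} clauses of Definition~\ref{defn:emplicit_realisability}: the combinators $\mathsf{i}$, $\mathsf{u}$, $\mathsf{s}$, $\mathsf{k}_\pi$ etc.\ act on explicit falsifiers exactly as before, since the definitions of $s\in\lvert A\to B\rvert$ and $s\in\lvert\forall x A\rvert$ have the same shape as the $(\mathrm{CR}_\to)$, $(\mathrm{CR}_\forall)$ clauses. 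What changes is that instead of invoking $(\mathrm{Ax}_\T)$ one invokes the last clause of Definition~\ref{defn:emplicit_realisability} together with $(\mathrm{RR}1_\beta)$ for the pole-closure step.

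**The genuinely new cases.** The interesting work is to realise, in $\mathsf{RR}_{<\gamma}$, the translations of the axioms that $\mathsf{RR}^{\emptyset}_{<\gamma}$ adds beyond plain logic and $\mathsf{PA}$, namely $\mathrm{RR}1_\beta$--$\mathrm{RR}10_\beta$ and the axiom $\pole=\emptyset$. For $\mathrm{RR}2_\beta$ through $\mathrm{RR}6_\beta$ the relevant biconditionals are, after unwinding $\in\lvert\cdot\rvert$, provable equivalences in $\mathsf{RR}_{<\gamma}$ itself (they are the compositional axioms), so a realiser is obtained from a $\mathsf{PA}$-proof of the corresponding logical equivalence via the formalised realisation of $\mathsf{PA}$; one then composes with the $\mathsf{i}$/$\mathsf{u}$ combinators to push the realiser through the biconditional. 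For $\mathrm{RR}7_\beta$--$\mathrm{RR}10_\beta$ the key observation is precisely Corollaries~\ref{cor:explicit_realisability} and~\ref{cor:RR}: e.g.\ $a\in\lvert t\F_\beta u\rvert \leftrightarrow a\T_\beta(t\dot\in\lVert u\rVert)$ and the four primed identities are \emph{theorems of $\mathsf{RR}_{<\gamma}$}, so the translated axiom reduces to an $\mathcal L_R^{<\gamma}$-provable equivalence, again realised by composing a $\mathsf{PA}$-realiser with $\mathsf{i}$. The axiom $\pole=\emptyset$ is realised trivially: $a\in\lvert\neg\exists x(x\in\pole)\rvert$ unwinds, via the clauses for $\to$, $\bot$ and $\exists$, to a statement whose only falsifiers are numbers coding a would-be witness $x\in\pole$, and from such a falsifier and $(\mathrm{RR}1_\beta)$ one fabricates an element of $\pole$, hence lands in $\pole$; this is the explicit-realisability analogue of Lemma~\ref{lem:continuation_constant}(2). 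Finally, modus ponens and the $\forall$-introduction rule are handled by $\mathsf{i}$ and $\mathsf{u}$ respectively, exactly as in Lemma~\ref{partial_compositionality}.

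**Formalisation and the $\mathsf{RR}_\gamma$ bound.** For the second, formalised, assertion one repeats the induction \emph{inside} $\mathsf{RR}_\gamma$, defining by the recursion theorem a number $s$ such that $s\cdot a$ runs through the proof coded by $a$ and assembles the realiser. Crucially, the realiser produced for a sentence $A\in\pred{Sent}_{\mathrm{R}}^{<\gamma}$ is an \emph{explicit} realiser $s\cdot a\in\lvert A\rvert$, and then one last application of $(\mathrm{RR}2_\gamma)$-style reasoning --- more precisely, the observation that for $A\in\pred{Sent}_{\mathrm{R}}^{<\gamma}$ the formula $x\in\lvert A\rvert$ and $x\T_\gamma\ulcorner A\urcorner$ are provably equivalent in $\mathsf{RR}_{\gamma}=\mathsf{RR}_{<\gamma+1}$ (this is why one must go up one level, to index $\gamma$) --- converts it to $(s\cdot a)\T_\gamma\ulcorner A\urcorner$. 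This provable equivalence between explicit realisation at the top and formal realisation by $\T_\gamma$ is itself a consequence of Corollary~\ref{cor:explicit_realisability} applied at level $\gamma$ together with the compositional axioms $\mathrm{RR}2_\gamma$--$\mathrm{RR}10_\gamma$, proved by an auxiliary induction on the build-up of $A$; establishing it uniformly (as a single $\mathsf{RR}_\gamma$-provable $\forall\ulcorner A\urcorner$ statement, not merely schematically) is the main technical obstacle, since the build-up of an $\Lr^{<\gamma}$-sentence is not bounded and one needs the primitive recursive $\dot\in$ operators to behave correctly under iterated nesting of the $\F_\beta$, $\T_\beta$ predicates --- exactly the point guaranteed by the primitive recursion theorem in Definition~\ref{defn:emplicit_realisability}. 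Once that equivalence is in hand, the displayed formula follows by the usual $\mathsf{i}$-composition, completing the proof.
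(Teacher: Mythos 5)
Your proposal follows essentially the same route as the paper's proof: induction on the $\mathsf{RR}^{\emptyset}_{<\gamma}$-derivation, reusing Lemmas~\ref{partial_compositionality} and~\ref{lem:continuation_constant} and Theorem~\ref{formal_realisability_PA} for the logical and arithmetical parts, realising $\mathrm{RR}7_{\beta}$--$\mathrm{RR}10_{\beta}$ via the identities of Corollaries~\ref{cor:explicit_realisability} and~\ref{cor:RR}, handling $\pole=\emptyset$ by a $\mathsf{k}_{\pole}$-style argument, and formalising the construction at level $\gamma$ using the primed identities. One caution: where you say the translated axioms ``reduce to an $\Lr^{<\gamma}$-provable equivalence, realised by composing a $\mathsf{PA}$-realiser with $\mathsf{i}$'', note that provable equivalence in $\mathsf{RR}_{<\gamma}$ alone does not yield a realiser (closure of realisability under such provability is exactly what is being established); what is needed --- and what the corollaries deliver and the paper verifies explicitly, together with a case split on whether $\pred{Sent}_{\mathrm{R}}^{<\alpha}(x)$ holds --- is the provable coincidence of the \emph{explicit realiser and falsifier sets} of the two sides, after which a trivial pairing term such as $\lambda y.\langle (y)_0,(y)_1\rangle$ realises each direction of the biconditional.
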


\begin{proof}
The proof is by induction on the derivation of $A$.
Firstly, the results for $\mathsf{CR}$ can easily be generalised to those for explicit and formal realisability. 
In particular, we will use Lemma~\ref{partial_compositionality}, Lemma~\ref{lem:continuation_constant}, and Theorem~\ref{formal_realisability_PA} without proof. 
Therefore, it suffices to consider the additional axioms of $\mathsf{RR}_{< \gamma}$. 
Moreover, realisability of the axioms $\mathrm{RR}1_{\beta}$ to $\mathrm{RR}6_{\beta}$ are shown exactly in the same way as in \cite[\S3]{hayashi2025friedman}. 
Similarly, the axiom $\pole = \emptyset$ is treated in \cite[Corollary~4]{hayashi2025friedman}.
Thus, we concentrate on the remaining axioms $\mathrm{RR}7_{\beta}$ to $\mathrm{RR}10_{\beta}$.
\begin{description}
\item[$\mathrm{RR}7_{\beta}$] Assume $\alpha < \beta < \gamma$. 
Note that $\mathrm{RR}7_{\beta}$ has the form of universally quantified conditional:
\begin{align}
\forall x \Bigl( \pred{Sent}_{\mathrm{R}}^{<\alpha} (x) \to  \bigl( a\F_{\beta} ( \pred{sub} ( \ulcorner b \F_{\alpha} x \urcorner, b, x ) ) \leftrightarrow a \in \lVert b\F_{\alpha} x \rVert \bigr) \Bigr) \label{lem:explicit-realisability-RR:proof0}
\end{align}
Thus, if the antecedent $\pred{Sent}_{\mathrm{R}}^{< \alpha} (x)$ is false, then we have $0 \in \lVert \pred{Sent}_{\mathrm{R}}^{< \alpha} (x) \rVert$, and hence
the conditional of \ref{lem:explicit-realisability-RR:proof0} is realisable by some term $r$ by Lemma~\ref{lem:continuation_constant}. 
If $\pred{Sent}_{\mathrm{R}}^{< \alpha} (x)$ is true, then we can express the succedent as $a\F_{\beta} \ulcorner \dot{b} \F_{\alpha} \ulcorner A \urcorner \urcorner \leftrightarrow a \in \lVert b\F_{\alpha} \ulcorner A \urcorner \rVert$,
where $\ulcorner A \urcorner = x \in \pred{Sent}_{\mathrm{R}}^{< \alpha}$. 
To give a realiser for the succedent,
we first prove 
$\forall x (x \in \lvert a\F_{\beta} \ulcorner \dot{b} \F_{\alpha} \ulcorner A \urcorner \urcorner \rvert \leftrightarrow x \in \lvert a \in \lVert b\F_{\alpha} \ulcorner A \urcorner \rVert \rvert)$ in $\mathsf{RR}_{< \gamma}$ as follows:
\begin{align}
x \in \lvert a\F_{\beta} \ulcorner \dot{b} \F_{\alpha} \ulcorner A \urcorner \urcorner \rvert \ &\leftrightarrow \ x \T_{\beta} \ulcorner \dot{a} \in \lVert \dot{b} \F_{\alpha} \ulcorner A \urcorner \rVert \urcorner &&\text{by Corollary~\ref{cor:explicit_realisability}} \notag \\
&\leftrightarrow \ x \T_{\beta} \ulcorner \dot{a} \F_{\alpha} \ulcorner \dot{\dot{b}} \in \lVert  A \rVert \urcorner \urcorner &&\text{by Definition~\ref{defn:emplicit_realisability}} \notag \\
&\leftrightarrow \ x \in \lvert  a \F_{\alpha} \ulcorner \dot{b} \in \lVert  A \rVert \urcorner \rvert 
&&\text{by Corollary~\ref{cor:RR}} \notag \\
&\leftrightarrow x \in \lvert a \in \lVert b\F_{\alpha} \ulcorner A \urcorner \rVert \rvert &&\text{by Definition~\ref{defn:emplicit_realisability}} \notag
\end{align}
Then, it is easy to see the following:
\begin{align}
(\lambda y. \langle (y)_0, (y)_1 \rangle) \in \lvert a\F_{\beta} \ulcorner \dot{b} \F_{\alpha} \ulcorner A \urcorner \urcorner \to a \in \lVert b\F_{\alpha} \ulcorner A \urcorner \rVert \rvert. \label{lem:explicit-realisability-RR:proof1}
\end{align}
Indeed, $\mathsf{RR}_{< \gamma}$ derives:
\begin{align}
(\ref{lem:explicit-realisability-RR:proof1}) &\Leftrightarrow \forall z (z \in \lVert a\F_{\beta} \ulcorner \dot{b} \F_{\alpha} \ulcorner A \urcorner \urcorner \to a \in \lVert b\F_{\alpha} \ulcorner A \urcorner \rVert \rVert \to \langle (\lambda y. \langle (y)_0, (y)_1 \rangle), z \rangle \in \pole) \notag \\
&\Leftrightarrow \forall z \bigl( (z)_0 \in \lvert a\F_{\beta} \ulcorner \dot{b} \F_{\alpha} \ulcorner A \urcorner \urcorner \rvert \land (z)_1 \in \lVert a \in \lVert b\F_{\alpha} \ulcorner A \urcorner \rVert \rVert \to \langle (\lambda y. \langle (y)_0, (y)_1 \rangle), z \rangle \in \pole \bigr) \notag \\
&\Leftarrow \forall z \bigl( (z)_0 \in \lvert a\F_{\beta} \ulcorner \dot{b} \F_{\alpha} \ulcorner A \urcorner \urcorner \rvert \land (z)_1 \in \lVert a \in \lVert b\F_{\alpha} \ulcorner A \urcorner \rVert \rVert \to \langle (z)_0, (z)_1 \rangle \in \pole \bigr) \label{lem:explicit-realisability-RR:proof2}
\end{align}
Since we have seen $\forall z \bigl( (z)_0 \in \lvert a\F_{\beta} \ulcorner \dot{b} \F_{\alpha} \ulcorner A \urcorner \urcorner \rvert \leftrightarrow (z)_0 \in \lvert a \in \lVert b\F_{\alpha} \ulcorner A \urcorner \rVert \rvert \bigr)$, the formula (\ref{lem:explicit-realisability-RR:proof2}) and hence (\ref{lem:explicit-realisability-RR:proof1}) hold.
Similarly, we get the converse direction:
\begin{align}
(\lambda y. \langle (y)_0, (y)_1 \rangle) \in \lvert a\F_{\beta} \ulcorner \dot{b} \F_{\alpha} \ulcorner A \urcorner \urcorner \leftarrow a \in \lVert b\F_{\alpha} \ulcorner A \urcorner \rVert \rvert. \label{lem:explicit-realisability-RR:proof3}
\end{align}
As realisability is closed under classical logic (cf.~Theorem~\ref{formal_realisability_PA}), from (\ref{lem:explicit-realisability-RR:proof1}) and (\ref{lem:explicit-realisability-RR:proof3}) follows that
$a\F_{\beta} \ulcorner \dot{b} \F_{\alpha} \ulcorner A \urcorner \urcorner \leftrightarrow a \in \lVert b\F_{\alpha} \ulcorner A \urcorner \rVert$ is also realised.
Thus, the conditional of (\ref{lem:explicit-realisability-RR:proof0}) is also realised by some term $q$ by Lemma~\ref{partial_compositionality} and Theorem~\ref{formal_realisability_PA}.

Using the above terms $r,q$, we now define a term $p$ such that:
\begin{center}
$p \cdot x \simeq
\begin{cases}
q & \text{if $\pred{Sent}_{\mathrm{R}}^{< \alpha} (x)$} \\
r & \text{otherwise.}
\end{cases}$
\end{center}
Then, $p \cdot x$ always realises the conditional of (\ref{lem:explicit-realisability-RR:proof0}), whether $\pred{Sent}_{\mathrm{R}}^{< \alpha} (x)$ is true or not.  
Thus,  (\ref{lem:explicit-realisability-RR:proof0}) itself is realisable by Lemma~\ref{partial_compositionality}.

Finally, the above proof is straightforwardly formalisable by using $(\mathrm{RR}9_{\gamma})'$ of Corollary~\ref{cor:RR}.
For instance, we have in $\mathsf{RR}_{\gamma}$:
\[
\forall \beta < \gamma \bigl( \forall \alpha < \beta (
\forall \ulcorner A \urcorner \in \pred{Sent}_{\mathrm{R}}^{<\alpha}. \ x \T_{\gamma} \ulcorner \dot{a} \F_{\beta} \ulcorner \dot{\dot{b}} \F_{\alpha} \ulcorner A \urcorner \urcorner \urcorner \leftrightarrow x \T_{\gamma} \ulcorner \dot{a} \in \lVert \dot{b}\F_{\alpha} \ulcorner A \urcorner \rVert \urcorner ) \bigr),
\]
from which it follows that $\mathsf{RR}_{\gamma} \vdash \forall \beta < \gamma \bigl( \forall \alpha < \beta (t \T_{\gamma} \ulcorner (\mathrm{RR}7_{\dot{\beta}}) \urcorner) \bigr)$ for some term $t$.
 
\item[$\mathrm{RR}8_{\beta}$] Same as above.

\item[$\mathrm{RR}9_{\beta}$] By the same reason as above, we take any $\delta < \beta$ and $\ulcorner A \urcorner \in \pred{Sent}_{\mathrm{R}}^{< \alpha}$, and it is enough to give a realiser for the formula: $a\F_{\beta} \ulcorner \dot{b} \F_{\delta} \ulcorner A \urcorner \urcorner \leftrightarrow a\F_{\beta} \ulcorner \dot{b} \in \lVert A \rVert \urcorner$.
We prove that $x \in \lvert a\F_{\beta} \ulcorner \dot{b} \F_{\delta} \ulcorner A \urcorner \urcorner \rvert$ and $x \in \lvert a\F_{\beta} \ulcorner \dot{b} \in \lVert A \rVert \urcorner \rvert$ are equivalent over $\mathsf{RR}_{< \gamma}$.
\begin{align}
x \in \lvert a\F_{\beta} \ulcorner \dot{b} \F_{\delta} \ulcorner A \urcorner \urcorner \rvert \ &\Leftrightarrow \ x \T_{\beta} \ulcorner \dot{a} \in \lVert \dot{b} \F_{\delta} \ulcorner A \urcorner \rVert \urcorner &&\text{by Corollary~\ref{cor:explicit_realisability}}  \notag \\
&\Leftrightarrow \ x \T_{\beta} \ulcorner \dot{a}\F_{\delta} \ulcorner \dot{\dot{b}} \in \lVert A \rVert \urcorner \urcorner &&\text{by Definition~\ref{defn:emplicit_realisability}} \notag \\
&\Leftrightarrow \ x \T_{\beta} \ulcorner \dot{a} \in \lVert \dot{b} \in \lVert A \rVert \rVert \urcorner &&\text{by Corollary~\ref{cor:RR}} \notag \\
&\Leftrightarrow \ x \in \lvert a\F_{\beta} \ulcorner \dot{b} \in \lVert A \rVert \urcorner \rvert &&\text{by Corollary~\ref{cor:explicit_realisability}} \notag
\end{align}

\item[$\mathrm{RR}10_{\beta}$] Same as above.\qed

\end{description}
\end{proof}

By the previous lemma, it immediately follows that $\mathsf{RR}_{< \gamma}^+$ and $\mathsf{RR}_{< \gamma}^{\emptyset}$ are proof-theoretically equivalent.
So, to determine the lower bound of them, we prove that $\mathsf{RR}_{< \gamma}^{\emptyset}$ can relatively interpret $\mathsf{RT}_{< \gamma}$. 
For this purpose, generalising the translation in Lemma~\ref{lem:CR_with_empty},
we define a translation $\mathcal{T}_{0} : \Lt^{< \gamma} \to \Lr^{< \gamma}$ such that:
\begin{itemize}
\item $\mathcal{T}_{0}(\T_{\beta} t) \ = \  \pred{Sent}_{\mathrm{T}}^{< \beta}(t) \to 0 \T_{\beta} (\tau_0 (t)) $ for $\beta < \gamma$, 
\item the other vocabularies are unchanged.
\end{itemize}
Here, $\tau_0$ is a primitive recursive representation of $\mathcal{T}_{0}$.

The following two lemmata are provable in the same way as for Lemma~\ref{lem:CR_with_empty} (see also \cite[Proposition~5]{hayashi2025friedman}).

\begin{lemma}\label{lem:compositional-truth_RR-empty}
Take any $\beta < \gamma$. Then, the following are derivable in $\mathsf{RR}_{< \gamma}^{\emptyset}$:
\begin{enumerate}
\item $ \forall \ulcorner A \urcorner \in \pred{Sent}_{\mathrm{R}}^{< \beta}. \ x \T_{\beta} \ulcorner A \urcorner \leftrightarrow 0 \T_{\beta} \ulcorner A \urcorner$

\item $0 \T_{\beta} \ulcorner P(\dot{\vec{x}}) \urcorner  \leftrightarrow P(\vec{x}) $, for each atomic predicate $P(\vec{x})$ of $\mathcal{L}_{\pole}$

\item $ \forall \ulcorner A \urcorner, \ulcorner B \urcorner \in \pred{Sent}_{\mathrm{R}}^{< \beta}. \ 0 \T_{\beta} \ulcorner A \to B \urcorner \leftrightarrow \bigl( 0 \T_{\beta} \ulcorner A \urcorner \to 0 \T_{\beta} \ulcorner B \urcorner \bigr)$

\item $ \forall \ulcorner A_x \urcorner \in \pred{Sent}_{\mathrm{R}}^{< \beta}. \ 0 \T_{\beta} \ulcorner \forall x A \urcorner \leftrightarrow \forall x (0 \T_{\beta} \ulcorner A(\dot{x}) \urcorner)$
\end{enumerate}
\end{lemma}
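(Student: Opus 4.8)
The plan is to mimic the proof of Lemma~\ref{lem:CR_with_empty} level by level, so that, unlike the Tarski-biconditional of Lemma~\ref{fact:tarski_biconditional}, no induction on formula complexity is required: each of the four items is a single chase of biconditionals. The common tool is the observation that in $\mathsf{RR}_{<\gamma}^{\emptyset}$ the axiom $(\mathrm{RR}2_\beta)$ together with $\pole=\emptyset$ collapses the realisation clause at level $\beta$: for every $\ulcorner A\urcorner\in\pred{Sent}_{\mathrm{R}}^{<\beta}$ and every $a$,
\[
a\T_\beta\ulcorner A\urcorner\ \leftrightarrow\ \forall b\,\lnot(b\F_\beta\ulcorner A\urcorner),
\]
and the right-hand side does not mention $a$. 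Item~1 is then immediate (instantiate the two sides at $a$ and at $0$); in particular $\exists a\,(a\T_\beta\ulcorner A\urcorner)\leftrightarrow 0\T_\beta\ulcorner A\urcorner$.

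For Item~2 I would feed the axiom $(\mathrm{RR}4_\beta)$ into the displayed equivalence. Under $\pole=\emptyset$ the refutation clause $P(\vec x)\to\vec x\in\pole$ reduces to $\lnot P(\vec x)$ when $P$ is an equation of $\mathcal L$, and reduces to the trivially true $x\in\pole\to x\in\pole$ when $P$ is the pole predicate itself; in the first case $0\T_\beta\ulcorner P(\dot{\vec x})\urcorner\leftrightarrow\lnot\lnot P(\vec x)\leftrightarrow P(\vec x)$, and in the second $0\T_\beta\ulcorner\dot x\in\pole\urcorner\leftrightarrow\bot$, which under $\pole=\emptyset$ is again equivalent to $x\in\pole$. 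For Items~3 and 4 I would replay the two computations from the proof of Lemma~\ref{lem:CR_with_empty} with $\T$ replaced by $\T_\beta$, $(\mathrm{CR}_\to)$ by $(\mathrm{RR}5_\beta)$ and $(\mathrm{CR}_\forall)$ by $(\mathrm{RR}6_\beta)$. Concretely, for implication,
\[
0\T_\beta\ulcorner A\to B\urcorner\ \leftrightarrow\ \lnot\exists b\bigl((b)_0\T_\beta\ulcorner A\urcorner\wedge(b)_1\F_\beta\ulcorner B\urcorner\bigr)\ \leftrightarrow\ \bigl(\exists x\,x\T_\beta\ulcorner A\urcorner\bigr)\to\lnot\exists b\,(b\F_\beta\ulcorner B\urcorner),
\]
the last step being a propositional manipulation inside $\mathsf{PA}$ (using that every number is of the form $\langle u,v\rangle$), and then Item~1 together with the displayed equivalence rewrites this as $0\T_\beta\ulcorner A\urcorner\to0\T_\beta\ulcorner B\urcorner$. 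For the universal quantifier, $(\mathrm{RR}6_\beta)$ gives $0\T_\beta\ulcorner\forall xA\urcorner\leftrightarrow\forall b\,\lnot\bigl((b)_1\F_\beta\ulcorner A((\dot b)_0)\urcorner\bigr)$, which by the same surjectivity of pairing equals $\forall x\forall y\,\lnot(y\F_\beta\ulcorner A(\dot x)\urcorner)$, i.e.\ $\forall x\,(0\T_\beta\ulcorner A(\dot x)\urcorner)$. The syntactic side-conditions — that $\ulcorner A((\dot b)_0)\urcorner$ and $\ulcorner A(\dot x)\urcorner$ code sentences in $\pred{Sent}_{\mathrm{R}}^{<\beta}$ whenever $\ulcorner\forall xA\urcorner$ does, so that $(\mathrm{RR}2_\beta)$ applies to them — are $\mathsf{PA}$-provable facts about the coding and will be used silently.

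I do not expect a genuine obstacle: this is a routine relativisation of Lemma~\ref{lem:CR_with_empty}. The only point that needs a second of attention is the pole predicate in Item~2, where the refutation clause of $(\mathrm{RR}4_\beta)$ degenerates to a tautology instead of to the negation of the atom, so one must check separately that ``nothing realises $\dot x\in\pole$'' is still the right behaviour once $\pole=\emptyset$ is imposed.
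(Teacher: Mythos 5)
Your proposal is correct and is exactly the paper's route: the paper dispatches this lemma with the remark that it is ``provable in the same way as for Lemma~\ref{lem:CR_with_empty}'', i.e.\ by relativising that proof to level $\beta$ using $(\mathrm{RR}2_\beta)$, $(\mathrm{RR}4_\beta)$--$(\mathrm{RR}6_\beta)$ and $\pole=\emptyset$, which is precisely what you do (including the correct observation that under $\pole=\emptyset$ the clause for $a\T_\beta\ulcorner A\urcorner$ no longer depends on $a$, giving Item~1 for free). Your extra care with the atom $x\in\pole$ in Item~2 is a sensible check that the paper leaves implicit, and it comes out right.
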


\begin{lemma}\label{lem:compositional-truth_PA-empty}
For any $\mathcal{L}_{\mathrm{R}}^{< \gamma}$-formulas $A,B$, 
the following are derivable in $\mathsf{RR}_{< \gamma}^{\emptyset}$:
\begin{enumerate}
\item $x \in \lvert A \rvert \leftrightarrow 0 \in \lvert A \rvert$

\item $0 \in \lvert P(\vec{x}) \rvert \leftrightarrow P(\vec{x})$, for each atomic predicate $P(\vec{x})$ of $\mathcal{L}_{\pole}$

\item $0 \in \lvert A \to B \rvert \leftrightarrow (0 \in \lvert A \rvert \to 0 \in \lvert B \rvert)$

\item $0 \in \lvert \forall x A \rvert \leftrightarrow \forall x (0 \in \lvert A(x) \rvert)$
\end{enumerate}
\end{lemma}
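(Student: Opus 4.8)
The plan is to follow the proof of Lemma~\ref{lem:CR_with_empty}, with the explicit realisation predicate $\lvert\cdot\rvert$ of Definition~\ref{defn:emplicit_realisability} playing the role there assigned to $0\,\T\,x$. The key observation is that under $\pole=\emptyset$ the final clause of Definition~\ref{defn:emplicit_realisability} degenerates: since $\langle s,a\rangle\in\pole$ is refutable for every $s$ and $a$, in $\mathsf{RR}^{\emptyset}_{<\gamma}$ we have, for every $\Lr^{<\gamma}$-formula $A$,
\[
s\in\lvert A\rvert \ \leftrightarrow\ \forall a\,\lnot(a\in\lVert A\rVert) \ \leftrightarrow\ \lnot\exists a\,(a\in\lVert A\rVert),
\]
and the variable $s$ — which, being chosen fresh, does not occur in $\lVert A\rVert$ — has vanished from the right-hand side. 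This immediately gives item~1, together with the two auxiliary equivalences $\exists x\,(x\in\lvert A\rvert)\leftrightarrow 0\in\lvert A\rvert$ and $\exists a\,(a\in\lVert A\rVert)\leftrightarrow\lnot(0\in\lvert A\rvert)$, which I will use repeatedly below.

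The remaining items are then obtained by unfolding the relevant clause of Definition~\ref{defn:emplicit_realisability} and reasoning in $\mathsf{PA}$. For an atomic predicate $P(\vec{x})$ of $\mathcal{L}_{\pole}$, the clause $a\in\lVert P\vec{x}\rVert=(P\vec{x}\to a\in\pole)$ together with $\pole=\emptyset$ yields $a\in\lVert P\vec{x}\rVert\leftrightarrow\lnot P\vec{x}$, whence $\exists a\,(a\in\lVert P\vec{x}\rVert)\leftrightarrow\lnot P\vec{x}$ and so $0\in\lvert P\vec{x}\rvert\leftrightarrow\lnot\lnot P\vec{x}\leftrightarrow P\vec{x}$, which is item~2. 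For implication, from $a\in\lVert A\to B\rVert=(a)_0\in\lvert A\rvert\wedge(a)_1\in\lVert B\rVert$ and surjectivity of the pairing function we obtain $\exists a\,(a\in\lVert A\to B\rVert)\leftrightarrow\exists x\,(x\in\lvert A\rvert)\wedge\exists y\,(y\in\lVert B\rVert)$; the auxiliary equivalences rewrite the latter as $0\in\lvert A\rvert\wedge\lnot(0\in\lvert B\rvert)$, so negating gives $0\in\lvert A\to B\rvert\leftrightarrow(0\in\lvert A\rvert\to 0\in\lvert B\rvert)$, i.e.\ item~3. The quantifier case is analogous: $a\in\lVert\forall x A\rVert=(a)_1\in\lVert A((a)_0)\rVert$ and surjectivity of pairing give $\exists a\,(a\in\lVert\forall x A\rVert)\leftrightarrow\exists n\,\exists m\,(m\in\lVert A(n)\rVert)\leftrightarrow\exists n\,\lnot(0\in\lvert A(n)\rvert)$, and negating yields $\forall n\,(0\in\lvert A(n)\rvert)$, which is item~4.

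The one point requiring care — and the only real obstacle — is that the informal substitutions $A((a)_0)$ and $A(n)$ appearing in the quantifier clause must be mediated by the primitive recursive function symbols $x\,\dot{\in}\,\lvert y\rvert$ and $x\,\dot{\in}\,\lVert y\rVert$ fixed in Definition~\ref{defn:emplicit_realisability}. Using the defining identities for these symbols one verifies, by an induction on formula complexity, that $\lVert\cdot\rVert$ and $\lvert\cdot\rvert$ commute with numeral substitution, so that the equivalences displayed above are genuinely provable in $\mathsf{RR}_{<\gamma}$ (respectively $\mathsf{RR}^{\emptyset}_{<\gamma}$) rather than merely true in the standard model. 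Once this bookkeeping is in place, the argument is a direct transcription of the proof of Lemma~\ref{lem:CR_with_empty} with $\lvert\cdot\rvert$ in the role of the truth predicate.
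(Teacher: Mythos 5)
Your proposal is correct and follows exactly the route the paper intends: the paper gives no separate argument for this lemma, stating only that it is ``provable in the same way as for Lemma~\ref{lem:CR_with_empty}'', and your proof is precisely that adaptation — under $\pole=\emptyset$ the clause $s\in\lvert A\rvert$ collapses to $\forall a\,\lnot(a\in\lVert A\rVert)$, after which items 2--4 follow by unfolding Definition~\ref{defn:emplicit_realisability} and reasoning in $\mathsf{PA}$. Your closing remark about commuting $\lVert\cdot\rVert$ with term substitution is the right (routine) bookkeeping for item~4, though note the $\dot\in$ function symbols are only involved in the atomic $\F_\beta/\T_\beta$ clauses, not in the outer quantifier clause itself.
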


The next lemma states that explicit realisability of $\mathcal{T}_0 (A)$ is, in the presence of $\pole = \emptyset$, equivalent to $\mathcal{T}_0 (A)$ itself.

\begin{lemma}\label{lem:T-sentence_RR} 
Let $A$ be an $\Lt^{< \gamma}$-formula.
Then, the following holds:
\[
\mathsf{RR}_{< \gamma}^{\emptyset} \vdash  0 \in \lvert \mathcal{T}_0 (A) \rvert \leftrightarrow \mathcal{T}_0 (A).
\]
Moreover, this is formalisable in $\mathsf{PA}$ in the sense that there exists a code $e$ of a partial recursive function such that
\[
\mathsf{PA} \vdash \forall \ulcorner A \urcorner \in \pred{Sent}_{\mathrm{T}}^{< \gamma}. \ \pred{Bew}_{\mathsf{RR}_{< \gamma}^{\emptyset}} (\num e \cdot \langle \gamma, \ulcorner A \urcorner \rangle, \ulcorner 0 \in \lvert \mathcal{T}_0 (A) \rvert \leftrightarrow \mathcal{T}_0 (A) \urcorner).
\]
\end{lemma}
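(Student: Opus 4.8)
The plan is to prove the first claim by induction on the $\Lt^{<\gamma}$-formula $A$, and then to note that every step of that induction is effective and so yields the required code $e$. For the induction itself, observe that $\mathcal{T}_0$ commutes with the connectives and, over $\mathsf{RR}_{<\gamma}^{\emptyset}$, explicit realisability $0\in\lvert\cdot\rvert$ is compositional (Lemma~\ref{lem:compositional-truth_PA-empty}); hence the propositional and quantifier cases follow at once from the induction hypothesis. In the atomic case there are two subcases. When $A$ is an $\mathcal{L}$-atom, $\mathcal{T}_0(A)=A$ and $0\in\lvert A\rvert\leftrightarrow A$ is an instance of Lemma~\ref{lem:compositional-truth_PA-empty}(2); a routine sub-induction using Lemma~\ref{lem:compositional-truth_PA-empty} moreover gives $\mathsf{RR}_{<\gamma}^{\emptyset}\vdash 0\in\lvert C\rvert\leftrightarrow C$ for every $\mathcal{L}$-formula $C$, a fact I will use below. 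The only remaining case is $A=\T_\beta t$ with $\beta<\gamma$, for which $\mathcal{T}_0(A)=\pred{Sent}_{\mathrm{T}}^{<\beta}(t)\to 0\T_\beta(\tau_0(t))$.

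For this case I would split on whether $\pred{Sent}_{\mathrm{T}}^{<\beta}(t)$ holds. If it fails, then $\mathcal{T}_0(A)$ is vacuously true, and by Lemma~\ref{lem:compositional-truth_PA-empty}(3) together with $0\in\lvert\pred{Sent}_{\mathrm{T}}^{<\beta}(t)\rvert\leftrightarrow\pred{Sent}_{\mathrm{T}}^{<\beta}(t)$ we get that $0\in\lvert\mathcal{T}_0(A)\rvert$ holds too. If it holds, then $\tau_0(t)$ denotes the code of an $\Lr^{<\beta}$-sentence — provably in $\mathsf{PA}$, since $\tau_0$ maps codes of $\Lt^{<\beta}$-sentences to codes of $\Lr^{<\beta}$-sentences — so the same two facts reduce the goal to $0\in\lvert 0\T_\beta\tau_0(t)\rvert\leftrightarrow 0\T_\beta\tau_0(t)$, and Corollary~\ref{cor:explicit_realisability} reduces it further to
\[
0\T_\beta\bigl(0\dot{\in}\lvert\tau_0(t)\rvert\bigr)\leftrightarrow 0\T_\beta\tau_0(t).
\]

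Everything therefore rests on the following core sub-claim, for each $\beta<\gamma$:
\[
\mathsf{RR}_{<\gamma}^{\emptyset}\vdash\forall u\bigl(\pred{Sent}_{\mathrm{R}}^{<\beta}(u)\to\bigl(0\T_\beta(0\dot{\in}\lvert u\rvert)\leftrightarrow 0\T_\beta u\bigr)\bigr),
\]
which is the internalisation of Lemma~\ref{lem:explicit-realisability-in-model}(2) specialised to the empty pole, and which I expect to be the main obstacle. I would prove it by induction on the complexity of the $\Lr^{<\beta}$-sentence coded by $u$ — available inside $\mathsf{RR}_{<\gamma}$ through $\Lr^{<\gamma}$-induction — run simultaneously with a side induction on $\beta$. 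Using the axiom $\mathrm{RR}2_{\beta}$ and $\pole=\emptyset$, both sides become statements about $\F_\beta$-refutation sets, so it suffices to compare those. For $u$ coding an $\mathcal{L}_\pole$-atom the two sides agree once $\lVert\cdot\rVert$ and $\lvert\cdot\rvert$ are unfolded by $\mathrm{RR}4_{\beta}$--$\mathrm{RR}6_{\beta}$ and Lemma~\ref{lem:compositional-truth_RR-empty}. For $u$ coding $b\F_\alpha\ulcorner C\urcorner$ or $b\T_\alpha\ulcorner C\urcorner$ with $\alpha<\beta$, the structural axioms $\mathrm{RR}7_{\beta}$--$\mathrm{RR}10_{\beta}$ (via Corollaries~\ref{cor:explicit_realisability} and~\ref{cor:RR}) push the computation down to level $\alpha$, where the side-induction hypothesis applies; the connective and quantifier cases follow from the induction hypothesis together with the compositionality clauses of Lemmas~\ref{lem:compositional-truth_RR-empty} and~\ref{lem:compositional-truth_PA-empty}. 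The delicate point — the reason a plain induction on formula complexity does not suffice — is that neither $\lVert\cdot\rVert$ nor $\lvert\cdot\rvert$ lowers the naive complexity of a sentence, so the induction must be carried on a measure combining the ordinal level with the nesting depth of the $\F$/$\T$-predicates, exactly as in the proof of Lemma~\ref{lem:explicit-realisability-in-model}.

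Finally, for the formalised version: every step above — the case analysis on the shape of $A$, the invocations of the (schematic) axioms and of Lemmas~\ref{lem:compositional-truth_PA-empty},~\ref{lem:compositional-truth_RR-empty} and Corollaries~\ref{cor:explicit_realisability},~\ref{cor:RR}, and the two nested inductions — is given by an explicit, primitive recursive construction in $\gamma$ and $\ulcorner A\urcorner$. One therefore reads off a code $e$ of a partial recursive function which on input $\langle\gamma,\ulcorner A\urcorner\rangle$ outputs a $\mathsf{RR}_{<\gamma}^{\emptyset}$-derivation of $\ulcorner 0\in\lvert\mathcal{T}_0(A)\rvert\leftrightarrow\mathcal{T}_0(A)\urcorner$, and the verification that it does so is a routine argument in $\mathsf{PA}$.
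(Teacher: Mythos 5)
Your reduction of the critical case $A=\T_\beta t$ to the biconditional $0\T_\beta(0\dot{\in}\lvert\tau_0(t)\rvert)\leftrightarrow 0\T_\beta\tau_0(t)$ matches the paper, and you are right that this is where all the difficulty sits. But the way you propose to discharge it — proving the internal, universally quantified sub-claim $\forall u(\pred{Sent}_{\mathrm{R}}^{<\beta}(u)\to(0\T_\beta(0\dot{\in}\lvert u\rvert)\leftrightarrow 0\T_\beta u))$ by an induction "exactly as in the proof of Lemma~\ref{lem:explicit-realisability-in-model}" — has a genuine gap. Lemma~\ref{lem:explicit-realisability-in-model} is a metatheoretic induction; as you yourself note, its measure is lexicographic in the ordinal level of the $\F/\T$-predicates (the clauses for $b\F_\alpha\ulcorner C\urcorner$ replace a short atom at level $\alpha$ by the arbitrarily long sentence $b\in\lVert C\rVert$ at levels $<\alpha$). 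Internalising that induction therefore requires transfinite induction along $<\gamma$ for a formula of the full language $\Lr^{<\gamma}$, uniformly in $\beta$. That principle is not among the axioms of $\mathsf{RR}^{\emptyset}_{<\gamma}$, and for the ordinals of interest ($\gamma\geq\varepsilon_0$) establishing how much such induction the theory proves is precisely the conclusion of the whole analysis (Theorem~\ref{thm:realisability-strength_RR}) — so assuming it here begs the question. You have not shown the sub-claim is derivable, and the naive route to it is blocked.

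The paper avoids the internal sub-claim entirely. Its induction is on $\gamma$ \emph{through provability}: for $A=\T_\beta\ulcorner B\urcorner$ it invokes the formalised induction hypothesis at level $\beta$, namely $\mathsf{PA}\vdash\pred{Bew}_{\mathsf{RR}^{\emptyset}_{<\beta}}(\num e\cdot\langle\beta,\ulcorner B\urcorner\rangle,\ulcorner 0\in\lvert\mathcal{T}_0(B)\rvert\leftrightarrow\mathcal{T}_0(B)\urcorner)$, then applies Lemma~\ref{lem:explicit-realisability-RR} to convert this provability fact into a \emph{formal realiser} $s$ with $\mathsf{RR}_\beta\vdash s\T_\beta\ulcorner 0\in\lvert\mathcal{T}_0(B)\rvert\leftrightarrow\mathcal{T}_0(B)\urcorner$, and finally Lemma~\ref{lem:compositional-truth_RR-empty} to distribute $0\T_\beta$ over the biconditional. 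Because the recursion passes through $\pred{Bew}$ at lower levels, closing it uniformly in $\gamma$ is not the "routine reading-off" your last paragraph suggests: one only gets \emph{reflexive progressiveness} of the formalised claim $C(\gamma)$ in $\mathsf{PA}$, and Schmerl's trick is needed to conclude $C(\gamma)$ itself. This is the key mechanism your proposal is missing; without it (or a genuine proof of your internal sub-claim) both the $\T_\beta t$ case and the "moreover" part remain open.
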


\begin{proof}
We prove the claim by induction on $\gamma$ and on the logical complexity of $A$.
If $A$ is an atomic predicate of $\mathcal{L}_{\pole}$, then the claim is proved in a similar way as for Lemma~\ref{lem:CR_with_empty}.
Thus, we consider the case $A = \T_{\beta}t$ for some $\beta < \gamma$.
First, we have $\mathcal{T}_0 (\T_{\beta}t) \leftrightarrow \bigl( \pred{Sent}_{\mathrm{T}}^{< \beta}(t) \to 0 \T_{\beta} (\tau_0(t)) \bigr)$ and hence the following:
\begin{align}
0 \in \lvert \mathcal{T}_0 (\T_{\beta}t) \rvert \ &\Leftrightarrow \ 0 \in \lvert \pred{Sent}_{\mathrm{T}}^{< \beta}(t) \to 0 \T_{\beta} (\tau_0(t)) \rvert &&\text{by Definition of $\mathcal{T}_0$} \notag \\
&\Leftrightarrow \ \pred{Sent}_{\mathrm{T}}^{< \beta}(t) \to 0 \in \lvert 0 \T_{\beta} (\tau_0(t)) \rvert &&\text{by Lemma~\ref{lem:compositional-truth_PA-empty}} \notag \\
&\Leftrightarrow \ \pred{Sent}_{\mathrm{T}}^{< \beta}(t) \to 0 \T_{\beta} (0 \dot{\in} \lvert \tau_0(t) \rvert ) &&\text{by Corollary~\ref{cor:explicit_realisability}} \notag
\end{align}
Thus, if $t$ does not denote the code of an $\Lt^{< \beta}$-sentence, then both $\mathcal{T}_0 (\T_{\beta}t)$ and $0 \in \lvert \mathcal{T}_0 (\T_{\beta}t) \rvert$ are true, and thus the claim follows.
Otherwise, we can write $A = \T_{\beta}\ulcorner B \urcorner$ for some $\ulcorner B \urcorner = t \in \pred{Sent}_{\mathrm{T}}^{< \beta}$, and we now need to prove $0 \T_{\beta} (\tau_0(\ulcorner B \urcorner)) \leftrightarrow 0 \T_{\beta} (0 \dot{\in} \lvert \tau_0(\ulcorner B \urcorner) \rvert )$, i.e. $0 \T_{\beta} \ulcorner \mathcal{T}_0 (B) \urcorner \leftrightarrow 0 \T_{\beta} \ulcorner 0 \in \lvert \mathcal{T}_0(B) \rvert \urcorner$.
But, by the main induction hypothesis,
it follows that $\mathsf{PA} \vdash \forall \ulcorner B \urcorner \in \pred{Sent}_{\mathrm{T}}^{< \beta}. \ \pred{Bew}_{\mathsf{RR}_{< \beta}^{\emptyset}} (\num e \cdot \langle \beta, \ulcorner B \urcorner \rangle, \ulcorner 0 \in \lvert \mathcal{T}_0 (B) \rvert \leftrightarrow \mathcal{T}_0 (B) \urcorner)$.
Therefore, by Lemma~\ref{lem:explicit-realisability-RR}, 
we particularly have some term $s$ such that $\mathsf{RR}_{\beta} \vdash s \T_{\beta} \ulcorner 0 \in \lvert \mathcal{T}_0 (B) \rvert \leftrightarrow \mathcal{T}_0 (B) \urcorner$.
Thus, Lemma~\ref{lem:compositional-truth_RR-empty} implies that $\mathsf{RR}_{< \gamma}^{\emptyset} \vdash 0 \T_{\beta}\ulcorner 0 \in \lvert \mathcal{T}_0 (B) \rvert \urcorner \leftrightarrow 0 \T_{\beta}\ulcorner \mathcal{T}_0 (B) \urcorner$,
as required.

If $A$ is a complex formula, then the proof is again along the lines of that of Lemma~\ref{lem:CR_with_empty}.

Finally, by formalising the above, we can find an appropriate partial recursive function $\{ e \}$ such that $\mathsf{PA}$ can derive \emph{reflexive progressiveness} of the claim $C(\gamma) := \forall \ulcorner A \urcorner \in \pred{Sent}_{\mathrm{T}}^{< \gamma}. \ \pred{Bew}_{\mathsf{RR}_{< \gamma}^{\emptyset}} (\num e \cdot \langle \gamma, \ulcorner A \urcorner \rangle, \ulcorner 0 \in \lvert \mathcal{T}_0 (A) \rvert \leftrightarrow \mathcal{T}_0 (A) \urcorner)$, i.e. 
\[
\mathsf{PA} \vdash \forall \gamma \bigl( \forall \beta < \gamma (\pred{Bew}_{\mathsf{PA}} \ulcorner C(\dot{\beta}) \urcorner) \to C(\gamma) \bigr),
\]
where $\pred{Bew}_{\mathsf{PA}}(x)$ is a canonical provability predicate for $\mathsf{PA}$.
Therefore, by Schmerl's trick (\cite[p.~337]{schmerl1979fine}), we obtain $C(\gamma)$ itself in $\mathsf{PA}$.
\qed
\end{proof}

Now we get relative interpretability of $\mathsf{RT}_{< \gamma}$, as desired.

\begin{lemma}\label{lem:interpretation_RT}
Let $A$ be an $\Lt^{< \gamma}$-formula.
If $\mathsf{RT}_{< \gamma} \vdash A$, then $\mathsf{RR}_{< \gamma}^{\emptyset} \vdash \mathcal{T}_0 (A)$.
\end{lemma}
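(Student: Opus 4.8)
The plan is to argue by induction on the length of a derivation of $A$ in $\mathsf{RT}_{<\gamma}$. Since $\mathcal{T}_0$ commutes with all logical connectives and quantifiers, fixes every $\mathcal{L}$-term and $\mathcal{L}$-atomic formula, and hence also commutes with substitution of terms for variables, the cases of the logical axioms and rules are immediate, and so is every instance of a $\mathsf{PA}$-axiom over $\Lt^{<\gamma}$: in particular $\mathcal{T}_0$ sends the induction instance for an $\Lt^{<\gamma}$-formula $\varphi$ to the induction instance for the $\Lr^{<\gamma}$-formula $\mathcal{T}_0(\varphi)$, which is an axiom of $\mathsf{RR}^{\emptyset}_{<\gamma}$. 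Thus it remains to verify $\mathcal{T}_0$ of each of $(\mathrm{RT}1_\beta)$--$(\mathrm{RT}6_\beta)$ in $\mathsf{RR}^{\emptyset}_{<\gamma}$. Throughout I will use freely the primitive recursive facts that $\tau_0$ commutes with $\deq,\dra,\dfa,\sub,\subt$ and maps codes of $\Lt^{<\beta}$-sentences to codes of $\Lr^{<\beta}$-sentences (and non-codes to non-codes), together with the observation — contained in Lemma~\ref{lem:compositional-truth_RR-empty} restricted to $\mathcal{L}$ — that $0\T_\beta\ulcorner C\urcorner\leftrightarrow C$ for every $\mathcal{L}$-sentence $C$, so that $0\T_\beta\ulcorner C\urcorner$ holds whenever $C$ is a true closed $\mathcal{L}$-sentence (such as an instance $\pred{Sent}^{<\delta}_{\mathrm{T}}(\ulcorner A\urcorner)$ with $\ulcorner A\urcorner$ an actual sentence code).

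For $(\mathrm{RT}1_\beta)$--$(\mathrm{RT}4_\beta)$ the verification is routine: under the relevant guard $\pred{Sent}^{<\beta}_{\mathrm{T}}$, the translation of $\T_\beta$ applied to a code $c$ simplifies to $0\T_\beta$ applied to $\tau_0(c)$, which is an $\Lr^{<\beta}$-sentence code on whose outermost syntactic operation $\tau_0$ commutes. One then reads the required biconditional directly off items (2), (3), (4) of Lemma~\ref{lem:compositional-truth_RR-empty} for $(\mathrm{RT}2_\beta)$, $(\mathrm{RT}3_\beta)$, $(\mathrm{RT}4_\beta)$ respectively, and off $(\mathrm{RR}3_\beta)'$ of Corollary~\ref{cor:RR} (with witness $0$) for the term-invariance axiom $(\mathrm{RT}1_\beta)$.

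The substantial cases are $(\mathrm{RT}5_\beta)$ and $(\mathrm{RT}6_\beta)$. Fix $\alpha<\beta<\gamma$ (for $(\mathrm{RT}6_\beta)$ one works under $\forall\delta<\beta$ with $\delta$ for $\alpha$) and $\ulcorner A\urcorner\in\pred{Sent}^{<\alpha}_{\mathrm{T}}$. Unfolding $\mathcal{T}_0$ and using the facts above, the translation of $\T_\beta\ulcorner\T_\alpha\ulcorner A\urcorner\urcorner$ reduces — via Lemma~\ref{lem:compositional-truth_RR-empty}(3) and $0\T_\beta$ of a true equation — to $0\T_\beta\ulcorner 0\T_\alpha\ulcorner\mathcal{T}_0(A)\urcorner\urcorner$, which by $(\mathrm{RR}8_\beta)'$ of Corollary~\ref{cor:RR} (with $a=b=0$) together with Corollary~\ref{cor:explicit_realisability} equals $0\T_\alpha\ulcorner 0\in\lvert\mathcal{T}_0(A)\rvert\urcorner$; meanwhile the translation of $\T_\alpha\ulcorner A\urcorner$ reduces to $0\T_\alpha\ulcorner\mathcal{T}_0(A)\urcorner$. (For $(\mathrm{RT}6_\beta)$ one uses $(\mathrm{RR}10_\beta)'$ in place of $(\mathrm{RR}8_\beta)'$ and remains at level $\beta$ on both sides.) So the crux is to prove, in $\mathsf{RR}^{\emptyset}_{<\gamma}$, the equivalence $0\T_\alpha\ulcorner 0\in\lvert\mathcal{T}_0(A)\rvert\urcorner\leftrightarrow 0\T_\alpha\ulcorner\mathcal{T}_0(A)\urcorner$ (respectively the same at level $\beta$). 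Here Lemma~\ref{lem:T-sentence_RR} and the formalised explicit realisability of Lemma~\ref{lem:explicit-realisability-RR} are used: by the formalised part of Lemma~\ref{lem:T-sentence_RR}, $\mathsf{PA}$ proves the existence of an $\mathsf{RR}^{\emptyset}_{<\alpha}$-proof of $0\in\lvert\mathcal{T}_0(A)\rvert\leftrightarrow\mathcal{T}_0(A)$, a sentence in $\pred{Sent}^{<\alpha}_{\mathrm{R}}$; applying the formalised part of Lemma~\ref{lem:explicit-realisability-RR} at level $\alpha$ then yields, inside $\mathsf{RR}_\alpha\subseteq\mathsf{RR}^{\emptyset}_{<\gamma}$, a term realising this biconditional at level $\alpha$. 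Since a realiser of $\ulcorner C\leftrightarrow D\urcorner$ transports realisers of $C$ to realisers of $D$ and back (Lemma~\ref{partial_compositionality}, transferred to level $\alpha$) and $0\T_\alpha\ulcorner C\urcorner\leftrightarrow\exists x\,(x\T_\alpha\ulcorner C\urcorner)$ by Lemma~\ref{lem:compositional-truth_RR-empty}(1), this realiser delivers the wanted equivalence. The main obstacle is precisely this last bridge — matching the explicit realisation predicate $0\in\lvert-\rvert$ with the ambient formal realisation $0\T_\alpha\ulcorner-\urcorner$ through the T-sentence lemma and the formalised extraction of a realiser — while keeping the index bookkeeping (which of $\alpha,\beta,\gamma$, and which subtheory one reasons in) and the $\tau_0$/guard manipulations under control.
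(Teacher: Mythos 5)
Your proposal is correct and follows essentially the same route as the paper: induction on derivations with only $(\mathrm{RT}5_\beta)$ and $(\mathrm{RT}6_\beta)$ requiring work, handled by unfolding $\mathcal{T}_0$ via Lemma~\ref{lem:compositional-truth_RR-empty} and Corollary~\ref{cor:RR} and then bridging $0\in\lvert\mathcal{T}_0(A)\rvert$ with $\mathcal{T}_0(A)$ through Lemma~\ref{lem:T-sentence_RR} and the formalised realiser extraction of Lemma~\ref{lem:explicit-realisability-RR}. The only (cosmetic) divergence is in the $(\mathrm{RT}5_\beta)$ endgame, where the paper stops at $0\in\lvert\mathcal{T}_0(\T_\alpha\gn{A})\rvert$ and applies Lemma~\ref{lem:T-sentence_RR} directly, while you descend to level $\alpha$ and reuse the realiser-transport argument that the paper reserves for $(\mathrm{RT}6_\beta)$; both are sound.
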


\begin{proof}
By induction of the derivation of $A$.
As the other cases are immediate from Corollary~\ref{cor:RR} and Lemma~\ref{lem:compositional-truth_RR-empty}
(see also Lemma~\ref{lem:CR_with_empty} or  \cite[Proposition~5]{hayashi2025friedman}),
we only consider the axioms $\mathrm{RT}5_{\beta}$ and $\mathrm{RT}6_{\beta}$.
\begin{description}
\item[$\mathrm{RT}5_{\beta}$] 
$\mathcal{T}_0 (\forall \ulcorner A \urcorner \in \pred{Sent}_{\mathrm{T}}^{<\alpha}. \ \T_{\beta} \ulcorner \T_{\alpha} \ulcorner A \urcorner \urcorner \leftrightarrow \T_{\alpha} \ulcorner A \urcorner)$ is equivalent to the formula:
\[
\forall \ulcorner A \urcorner \in \pred{Sent}_{\mathrm{T}}^{<\alpha}. \  \mathcal{T}_0 (\mathcal{T}_0 (\T_{\beta} \ulcorner \T_{\alpha} \ulcorner A \urcorner \urcorner) \leftrightarrow \mathcal{T}_0 (\T_{\alpha} \ulcorner A \urcorner).
\]
Taking any $\ulcorner A \urcorner \in \pred{Sent}_{\mathrm{T}}^{< \alpha}$, we then prove equivalence of $\mathcal{T}_0(\T_{\beta} \ulcorner \T_{\alpha} \ulcorner A \urcorner \urcorner)$ and $ \mathcal{T}_0 (\T_{\alpha} \ulcorner A \urcorner)$ as follows:
\begin{align}
& \mathcal{T}_0(\T_{\beta} \ulcorner \T_{\alpha} \ulcorner A \urcorner \urcorner) \notag \\
&\Leftrightarrow \pred{Sent}_{\mathrm{T}}^{< \beta} (\ulcorner \T_{\alpha} \ulcorner A \urcorner \urcorner) \to 0 \T_{\beta} \ulcorner \mathcal{T}_0 (\T_{\alpha} \ulcorner A \urcorner ) \urcorner &&\text{by Definition of $\mathcal{T}_0$} \notag \\
&\Leftrightarrow 0\T_{\beta} \ulcorner \mathcal{T}_0 (\T_{\alpha} \ulcorner A \urcorner) \urcorner &&\text{by $\pred{Sent}_{\mathrm{T}}^{< \beta} (\ulcorner T_{\alpha} \ulcorner A \urcorner \urcorner)$} \notag \\
&\Leftrightarrow 0 \T_{\beta} \ulcorner \pred{Sent}_{\mathrm{T}}^{< \alpha} (\ulcorner A \urcorner) \to 0 \T_{\alpha} \ulcorner \mathcal{T}_0 (A) \urcorner \urcorner &&\text{by Definition of $\mathcal{T}_0$} \notag \\
&\Leftrightarrow \pred{Sent}_{\mathrm{T}}^{< \alpha} (\ulcorner A \urcorner) \to 0\T_{\beta} \ulcorner 0 \T_{\alpha} \ulcorner \mathcal{T}_0 (A) \urcorner \urcorner &&\text{by Lemma~\ref{lem:compositional-truth_RR-empty}} \notag \\
&\Leftrightarrow \pred{Sent}_{\mathrm{T}}^{< \alpha} (\ulcorner A \urcorner) \to 0\in \lvert 0 \T_{\alpha} \ulcorner \mathcal{T}_0 (A) \urcorner \rvert &&\text{by Corollary~\ref{cor:RR}} \notag \\
&\Leftrightarrow 0\in \lvert \pred{Sent}_{\mathrm{T}}^{< \alpha} (\ulcorner A \urcorner) \to 0 \T_{\alpha} \ulcorner \mathcal{T}_0 (A) \urcorner \rvert &&\text{by Lemma~\ref{lem:compositional-truth_PA-empty}} \notag \\
&\Leftrightarrow 0 \in \lvert \mathcal{T}_0 (\T_{\alpha} \ulcorner A \urcorner) \rvert &&\text{by Definition of $\mathcal{T}_0$} \notag \\
&\Leftrightarrow \mathcal{T}_0 (\T_{\alpha} \ulcorner A \urcorner) &&\text{by Lemma~\ref{lem:T-sentence_RR}} \notag
\end{align}

\item[$\mathrm{RT}6_{\beta}$] 
$\mathcal{T}_0 \bigl( \forall \delta ( \delta < \beta \to \forall \ulcorner A \urcorner \in \pred{Sent}_{\mathrm{T}}^{<\delta}. \ \T_{\beta} \ulcorner \T_{\delta} \ulcorner A \urcorner \urcorner \leftrightarrow \T_{\beta} \ulcorner A \urcorner ) \bigr)$ is equivalent to the formula:
\[
\forall \delta \bigl( \delta < \beta \to \forall \ulcorner A \urcorner \in \pred{Sent}_{\mathrm{T}}^{<\delta}. \ \mathcal{T}_0  (\T_{\beta} \ulcorner \T_{\delta} \ulcorner A \urcorner \urcorner) \leftrightarrow \mathcal{T}_0 (\T_{\beta} \ulcorner A \urcorner ) \bigr).
\]
Thus, taking any $\delta < \beta$ and $\ulcorner A \urcorner \in \pred{Sent}_{\mathrm{T}}^{< \delta}$, we want to prove the equivalence of $\mathcal{T}_0 (\T_{\beta} \ulcorner \T_{\delta} \ulcorner A \urcorner \urcorner)$ and $\mathcal{T}_0 (\ulcorner \T_{\beta} \ulcorner A \urcorner \urcorner)$.
Firstly, we have the following equivalences in the same way as above:
\begin{align}
\mathcal{T}_0 (\T_{\beta} \ulcorner \T_{\delta} \ulcorner A \urcorner \urcorner) &\Leftrightarrow 
\pred{Sent}_{\mathrm{T}}^{< \beta}(\ulcorner \T_{\delta} \ulcorner A \urcorner \urcorner) \to 
0 \T_{\beta} \ulcorner \mathcal{T}_0 ( \T_{\delta} \ulcorner A \urcorner ) \urcorner \notag \\
&\Leftrightarrow 
0 \T_{\beta} \ulcorner \mathcal{T}_0 ( \T_{\delta} \ulcorner A \urcorner ) \urcorner \notag \\
&\Leftrightarrow 0 \T_{\beta} \ulcorner \pred{Sent}_{\mathrm{T}}^{< \delta}(\ulcorner A \urcorner) \to 0 \T_{\delta} \ulcorner \mathcal{T}_0 (A)\urcorner \urcorner
\notag \\
&\Leftrightarrow \pred{Sent}_{\mathrm{T}}^{< \delta}(\ulcorner A \urcorner) \to  0 \T_{\beta} \ulcorner 0 \T_{\delta} \ulcorner \mathcal{T}_0 (A)\urcorner \urcorner
\notag \\
&\Leftrightarrow 0 \T_{\beta} \ulcorner 0 \T_{\delta} \ulcorner \mathcal{T}_0 (A)\urcorner \urcorner \notag \\
\mathcal{T}_0 (\ulcorner \T_{\beta} \ulcorner A \urcorner \urcorner) &\Leftrightarrow \pred{Sent}_{\mathrm{T}}^{< \beta} (\ulcorner A \urcorner) \to 0 \T_{\beta} \ulcorner \mathcal{T}_0 (A) \urcorner \notag \\
&\Leftrightarrow 0 \T_{\beta} \ulcorner \mathcal{T}_0 (A) \urcorner \notag
\end{align}
We therefore prove $0 \T_{\beta} \ulcorner 0 \T_{\delta} \ulcorner \mathcal{T}_0 (A)\urcorner \urcorner \leftrightarrow 0 \T_{\beta} \ulcorner \mathcal{T}_0 (A) \urcorner$ in the following.

By Lemma~\ref{lem:T-sentence_RR}, we have $\mathsf{PA} \vdash \pred{Bew}_{\mathsf{RR}_{< \beta}^{\emptyset}}( s, \ulcorner 0 \in \lvert \mathcal{T}_0 (A)\rvert \leftrightarrow \mathcal{T}_0 (A) \urcorner )$ for some term $s$.
By Lemma~\ref{lem:explicit-realisability-RR}, this is formally realisable by some term $t$ in $\mathsf{RR}_{\beta}$:
\begin{align}
\mathsf{RR}_{\beta} \vdash t \T_{\beta} \ulcorner 0 \in \lvert \mathcal{T}_0 (A)\rvert \leftrightarrow \mathcal{T}_0 (A) \urcorner, \notag
\end{align}

which, by Lemma~\ref{lem:compositional-truth_RR-empty}, 
implies:
\begin{align}
\mathsf{RR}_{\beta}^{\emptyset} \vdash 0 \T_{\beta} \ulcorner 0 \in \lvert \mathcal{T}_0 (A)\rvert \urcorner \leftrightarrow 0 \T_{\beta} \ulcorner \mathcal{T}_0 (A) \urcorner 
\label{proof-RT6.2:lem:interpretation_RT}
\end{align}
Since $\ulcorner A \urcorner \in \pred{Sent}_{\mathrm{T}}^{< \delta}$ implies $\ulcorner \mathcal{T}_0 (A) \urcorner \in \pred{Sent}_{\mathrm{R}}^{< \delta}$, the formula (\ref{proof-RT6.2:lem:interpretation_RT}) is, by Corollary~\ref{cor:RR}, equivalent to 
$0 \T_{\beta} \ulcorner 0 \T_{\delta} \ulcorner \mathcal{T}_0 (A)\urcorner \urcorner \leftrightarrow 0 \T_{\beta} \ulcorner \mathcal{T}_0 (A) \urcorner$, as required. \qed

\end{description}
\end{proof}

\begin{theorem}\label{thm:realisability-strength_RR}
All of $\mathsf{RR}^{+}_{< \gamma}$, $\mathsf{RR}^{\emptyset}_{< \gamma}$, and $\mathsf{RT}_{< \gamma}$ have the same $\mathcal{L}$-theorems for any ordinal $\gamma$.
In particular, if $\gamma = \omega^{\lambda} \geq \omega$, then they are also $\mathcal{L}$-equivalent to $\mathsf{PA} + \mathsf{TI}({<} \varphi (1 + \lambda) 0)$.
\end{theorem}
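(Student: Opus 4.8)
The plan is to establish the first assertion by sandwiching the three theories at the level of $\mathcal{L}$-consequences, and then to read off the second assertion from the known proof-theoretic strength of ramified truth. Two of the three inclusions are easy. First, the reflection rule is admissible in $\mathsf{RR}^{\emptyset}_{<\gamma}$, by the ramified analogue of Lemma~\ref{lem:reflection_empty_CR}: Lemma~\ref{lem:compositional-truth_RR-empty} gives $t\T_{\beta}\ulcorner A\urcorner\leftrightarrow 0\T_{\beta}\ulcorner A\urcorner$ for $A\in\pred{Sent}_{\mathrm{R}}^{<\beta}$, and its compositional clauses yield, by external induction on $A$ exactly as in Lemma~\ref{fact:tarski_biconditional}, the biconditional $0\T_{\beta}\ulcorner A\urcorner\leftrightarrow A$ for each $\mathcal{L}$-sentence $A$; since every axiom of $\mathsf{RR}^{+}_{<\gamma}$ is an axiom of $\mathsf{RR}^{\emptyset}_{<\gamma}$, this makes $\mathsf{RR}^{+}_{<\gamma}$ a subtheory of $\mathsf{RR}^{\emptyset}_{<\gamma}$. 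Second, $\mathsf{RR}^{\emptyset}_{<\gamma}$ and $\mathsf{RT}_{<\gamma}$ have the same $\mathcal{L}$-theorems, because the translations $\mathcal{T}_{\emptyset}$ of Lemma~\ref{lem:upper-bound_RR-empty} and $\mathcal{T}_{0}$ of Lemma~\ref{lem:interpretation_RT} both restrict to the identity on $\mathcal{L}$, so these two lemmas supply the two directions at once.

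The remaining inclusion---that every $\mathcal{L}$-theorem of $\mathsf{RT}_{<\gamma}$ is already provable in $\mathsf{RR}^{+}_{<\gamma}$---is where the work lies; the case $\gamma=0$ is trivial (all three theories are then $\mathcal{L}$-equivalent to $\mathsf{PA}$), so assume $\gamma\ge 1$. Given $A\in\mathcal{L}$ with $\mathsf{RT}_{<\gamma}\vdash A$, Lemma~\ref{lem:interpretation_RT} gives $\mathsf{RR}^{\emptyset}_{<\gamma}\vdash A$, and Lemma~\ref{lem:explicit-realisability-RR} then produces a closed term $t$ with $\mathsf{RR}_{<\gamma}\vdash t\in\lvert A\rvert$. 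To feed this into the reflection rule of $\mathsf{RR}^{+}_{<\gamma}$ one must convert the explicit realiser into a formal one, so I would prove an internal adequacy fact restricted to $\mathcal{L}$: for every $\mathcal{L}$-formula $C(\vec{x})$, both $a\in\lvert C(\dot{\vec{x}})\rvert\leftrightarrow a\T_{0}\ulcorner C(\dot{\vec{x}})\urcorner$ and $a\in\lVert C(\dot{\vec{x}})\rVert\leftrightarrow a\F_{0}\ulcorner C(\dot{\vec{x}})\urcorner$ are provable in $\mathsf{RR}_{<\gamma}$ (with $a,\vec{x}$ free), by simultaneous external induction on $C$ using the compositional axioms $(\mathrm{RR}2_{0})$ to $(\mathrm{RR}6_{0})$ and Definition~\ref{defn:emplicit_realisability}. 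For $\mathcal{L}$-sentences neither the $\F_{\alpha}/\T_{\alpha}$ clauses of Definition~\ref{defn:emplicit_realisability} nor the axioms $(\mathrm{RR}7_{\beta})$ to $(\mathrm{RR}10_{\beta})$ ever enter, so this is a routine transcription of the $\mathsf{CR}$-level computations behind Lemma~\ref{lem:explicit-realisability-RR}. Instantiating at $A$ yields $\mathsf{RR}_{<\gamma}\vdash t\T_{0}\ulcorner A\urcorner$, and the reflection rule gives $\mathsf{RR}^{+}_{<\gamma}\vdash A$. Chaining the three inclusions closes the sandwich and proves the first assertion.

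The second assertion is then immediate from the first together with the classical proof-theoretic analysis of ramified truth, the transfinite generalisation of Theorem~\ref{fact:strength_CT}: for $\gamma=\omega^{\lambda}\ge\omega$ the theories $\mathsf{RT}_{<\gamma}$ and $\mathsf{PA}+\mathsf{TI}({<}\varphi(1+\lambda)0)$ prove the same arithmetical sentences (see \cite{feferman1964systems} and \cite[\S9]{halbach2014axiomatic}). I expect the internal adequacy fact in the second paragraph to be the only genuine obstacle---specifically, carrying its induction through the universal-quantifier clause with enough uniformity in the free variables $\vec{x}$---while everything else reduces to bookkeeping around the cited lemmas and the observation that $\mathcal{T}_{\emptyset}$ and $\mathcal{T}_{0}$ fix $\mathcal{L}$.
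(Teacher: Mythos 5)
Your proposal is correct and follows essentially the same sandwich structure as the paper's proof: $\mathsf{RR}^{+}_{<\gamma}\subseteq\mathsf{RR}^{\emptyset}_{<\gamma}$ via the ramified analogue of Lemma~\ref{lem:reflection_empty_CR}, mutual interpretability of $\mathsf{RR}^{\emptyset}_{<\gamma}$ and $\mathsf{RT}_{<\gamma}$ via Lemmas~\ref{lem:upper-bound_RR-empty} and~\ref{lem:interpretation_RT} (whose translations fix $\mathcal{L}$), the lower bound for $\mathsf{RR}^{+}_{<\gamma}$ via Lemma~\ref{lem:explicit-realisability-RR} and the reflection rule, and the known conservativity of $\mathsf{RT}_{<\gamma}$ for the ordinal bound. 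The one place you go beyond the paper is the internal adequacy lemma $a\in\lvert C\rvert\leftrightarrow a\T_{0}\ulcorner C\urcorner$ for $\mathcal{L}$-formulas $C$, which correctly fills the step---left implicit in the paper---of converting the explicit realiser supplied by Lemma~\ref{lem:explicit-realisability-RR} into a formal one of the shape required by the reflection rule.
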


\begin{proof}
Samely as the proof of Lemma~\ref{lem:reflection_empty_CR}, we can verify that $\mathsf{RR}^{+}_{< \gamma}$ is a subtheory of $\mathsf{RR}^{\emptyset}_{< \gamma}$.
On the other hand, $\mathsf{RR}^{\emptyset}_{< \gamma}$ is, by Lemma~\ref{lem:explicit-realisability-RR}, realisable in $\mathsf{RR}_{< \gamma}$
Thus, $\mathsf{RR}^{+}_{< \gamma}$ derives every $\mathcal{L}$-theorem of $\mathsf{RR}^{\emptyset}_{< \gamma}$ by the reflection rule.

Next, $\mathsf{RR}^{\emptyset}_{< \gamma}$ is relatively interpretable in $\mathsf{RT}_{< \gamma}$ by Lemma~\ref{lem:upper-bound_RR-empty}.
Conversely, $\mathsf{RT}_{< \gamma}$
is relatively interpretable in $\mathsf{RR}_{< \gamma}^{\emptyset}$ by Lemma~\ref{lem:interpretation_RT}.

Finally, it is well known that $\mathsf{RT}_{< \gamma}$ is conservative over $\mathsf{PA} + \mathsf{TI}({<} \varphi (1 + \lambda) 0)$ when $\gamma = \omega^{\lambda} \geq \omega$ (see e.g. \cite[Theorem~4.4]{leigh2016reflecting}). \qed
\end{proof}


\section{Future Work}\label{sec:classical_real_future_work}
In this paper, we have axiomatised Krivine's classical realisability in a similar manner to the formalisation of Tarskian hierarchical truth, and then we generalise it to ramified theories.
Given that various self-referential approaches to truth have been developed \cite{cantini1990theory,friedman1987axiomatic,kripke1976outline,leitgeb2005truth},
it is natural to consider self-referential generalisations of classical realisability.
Since a Friedman--Sheard-style system is already proposed in \cite{hayashi2025friedman}, the next step would be to formulate systems based on Kripkean theories of truth.

Another direction of future work is the formalisation of alternative interpretations for classical theories.
Alternative realisability interpretations for $\mathsf{PA}$ and its extensions are presented in, e.g., \cite{aschieri2010learning,aschieri2012interactive,avigad2000realizability,berger2005modified,blot2015typed,blot2017realizability}.
It is also reasonable to consider the axiomatisation of intuitionistic realisability interpretations over Heyting arithmetic.  





%

%
%
%
 \bibliographystyle{splncs04}
 \bibliography{inyou}
%






\end{document}